\documentclass[10pt,a4paper,british]{article}
\usepackage{textcomp}
\usepackage[latin9]{inputenc}
\usepackage{color}
\usepackage{babel}
\usepackage{refstyle}
\usepackage{mathtools}
\usepackage{amsmath}
\usepackage{amsthm}
\usepackage{amssymb}
\usepackage{stmaryrd}
\usepackage[all]{xy}
\usepackage[bookmarks=true,bookmarksnumbered=true,bookmarksopen=true,bookmarksopenlevel=1,
 breaklinks=false,pdfborder={0 0 0},pdfborderstyle={},backref=false,colorlinks=true]
 {hyperref}
\hypersetup{pdftitle={Redundancy Rank},
 pdfauthor={Tal Cohen and Itamar Vigdorovich},
 pdfstartview={FitH}colorlinks=true,linkcolor=magenta,urlcolor=red,citecolor=blue}

\makeatletter

\AtBeginDocument{\providecommand\corref[1]{\ref{cor:#1}}}
\AtBeginDocument{\providecommand\lemref[1]{\ref{lem:#1}}}
\pdfpageheight\paperheight
\pdfpagewidth\paperwidth

\RS@ifundefined{subsecref}
  {\newref{subsec}{name = \RSsectxt}}
  {}
\RS@ifundefined{thmref}
  {\def\RSthmtxt{theorem~}\newref{thm}{name = \RSthmtxt}}
  {}
\RS@ifundefined{lemref}
  {\def\RSlemtxt{lemma~}\newref{lem}{name = \RSlemtxt}}
  {}

\numberwithin{equation}{section}
\numberwithin{figure}{section}
\theoremstyle{plain}
\newtheorem{thm}{\protect\theoremname}[section]
\theoremstyle{plain}
\newtheorem{conjecture}[thm]{\protect\conjecturename}
\theoremstyle{plain}
\newtheorem{cor}[thm]{\protect\corollaryname}
\theoremstyle{plain}
\newtheorem{prop}[thm]{\protect\propositionname}
\theoremstyle{plain}
\newtheorem{lem}[thm]{\protect\lemmaname}
\theoremstyle{definition}
\newtheorem{defn}[thm]{\protect\definitionname}
\theoremstyle{remark}
\newtheorem{rem}[thm]{\protect\remarkname}

\@ifundefined{date}{}{\date{}}
\usepackage{fullpage}
\date{}

\newref{lem}{refcmd={Lemma \ref{#1}}}
\newref{cor}{refcmd={Corollary \ref{#1}}}
\newref{thm}{refcmd={Theorem \ref{#1}}}

\AtBeginDocument{}
\def\moverlay{\mathpalette\mov@rlay}
\def\mov@rlay#1#2{\leavevmode\vtop{%
   \baselineskip\z@skip \lineskiplimit-\maxdimen
   \ialign{\hfil$\m@th#1##$\hfil\cr#2\crcr}}}
\newcommand{\charfusion}[3][\mathord]{
    #1{\ifx#1\mathop\vphantom{#2}\fi
        \mathpalette\mov@rlay{#2\cr#3}
      }
    \ifx#1\mathop\expandafter\displaylimits\fi}
\makeatother

\makeatother

\usepackage[style=alphabetic]{biblatex}
\providecommand{\conjecturename}{Conjecture}
\providecommand{\corollaryname}{Corollary}
\providecommand{\definitionname}{Definition}
\providecommand{\lemmaname}{Lemma}
\providecommand{\propositionname}{Proposition}
\providecommand{\remarkname}{Remark}
\providecommand{\theoremname}{Theorem}

\begin{document}
\global\long\def\res{\!\restriction}%

\global\long\def\actson{\curvearrowright}%

\global\long\def\Ad{\text{Ad}}%

\global\long\def\Prob#1{\text{Prob}\left(#1\right)}%

\global\long\def\Ch#1{\text{Ch}\left(#1\right)}%

\global\long\def\Tr#1{\text{Tr}\left(#1\right)}%

\global\long\def\Trleq#1{\text{Tr}_{\leq1}\left(#1\right)}%

\global\long\def\normalizer#1#2{\text{N}_{#1}\left(#2\right)}%

\global\long\def\FC#1{\text{FC}\left(#1\right)}%

\global\long\def\Sub#1{\text{Sub}\left(#1\right)}%

\global\long\def\conn#1{#1^{0}}%

\global\long\def\explain#1#2{\underset{\underset{\mathclap{#2}}{\downarrow}}{#1}}%

\global\long\def\Ind#1#2{\text{Ind}_{#2}^{#1}}%

\global\long\def\Res#1#2{\text{Res}_{#2}^{#1}}%

\global\long\def\N{\mathbb{N}}%
\global\long\def\Z{\mathbb{Z}}%
\global\long\def\Q{\mathbb{Q}}%
\global\long\def\R{\mathbb{R}}%
\global\long\def\C{\mathbb{C}}%
\global\long\def\H{\mathbb{H}}%
\global\long\def\T{\mathbb{T}}%
\global\long\def\P{\mathbb{P}}%

\global\long\def\bG{\mathbf{G}}%
\global\long\def\bH{\mathbf{H}}%
\global\long\def\bZ{\mathbf{Z}}%
\global\long\def\bK{\mathbf{K}}%

\global\long\def\BH{\mathcal{B}(\mathcal{H})}%

\global\long\def\sl#1{\mathrm{SL}(#1)}%

\global\long\def\slz{SL_{n}\left(\mathbb{Z}\right)}%
\global\long\def\slr{SL_{n}\left(\mathbb{R}\right)}%

\global\long\def\GC{\bG(\mathbb{C})}%
\global\long\def\G{\bG}%

\global\long\def\Gk{\mathbf{G}(k)}%

\global\long\def\fp{\mathfrak{p}}%

\global\long\def\zrr{m}%
\global\long\def\zr{\zrr^{\mathbb{C}}}%

\global\long\def\trr{m}%
\global\long\def\rr{m}%
\global\long\def\nrr{\mu}%

\global\long\def\rank{\mathrm{rank}}%
\global\long\def\ra{\mathrm{R}_{a}}%
\global\long\def\sr{\mathrm{R}_{s}}%

\title{On the Maximal Size of Irredundant Generating Sets in Lie Groups and
Algebraic Groups}
\author{Tal Cohen and Itamar Vigdorovich}
\maketitle
\begin{abstract}
We show the following dichotomy for a connected Lie group $G$: If
$G$ is amenable, then any topologically generating set $X\subset G$
of size larger than a fixed polynomial in the dimension of $G$ must
be redundant (i.e., a proper subset of $X$ still generates $G$).
If $G$ is non-amenable, then it admits arbitrarily large topologically
generating sets that are irredundant, and remain irredundant even
after applying Nielsen transformations. 

The polynomial bound for amenable groups is obtained by reduction
to finite simple groups of Lie type via strong approximation. This
partially answers two conjectures by Gelander on generation in compact
Lie groups and simple algebraic groups, and moreover shows that these
conjectures are implied by the Wiegold conjecture.

The construction of large Nielsen irredundant generating sets in non-amenable
groups is done by extending Minsky's work to higher rank Lie groups,
exhibiting dense representations in the domain of discontinuity of
the $\mathrm{Out}(F_{n})$-action on the character variety. 

\end{abstract}

\section{Introduction}

Let $G$ be a topological group. We say a subset $X\subset G$ is
\textit{generating }if the only closed subgroup of $G$ containing
$X$ is $G$. If $X$ admits a proper subset that is still generating,
we call it \textit{redundant}, and otherwise \textit{irredundant}.
How large irredundant generating sets in $G$ may be is measured by
the \textit{redundancy rank}
\[
\rr(G)=\sup\left\{ |X|:X\text{ is a finite irredundant generating set}\right\} .
\]

Estimating the value of $\rr(G)$ has been well-studied, but mostly
in the case $G$ is finite \cites{diaconis1998walks}{harper2023maximal}{lucchini2021bounding}{gill2023irredundant}{jambor2013minimal}{whiston2002maximal}.
For infinite $G$, the most basic problem is to determine whether
$\rr(G)$ is finite. For example, it is not hard to see that $\rr(\Z)=\infty$.
In general, however, the question of finiteness quickly becomes very
challenging, let alone obtaining effective bounds. 

Some deep questions arise when taking into account the natural action
of the automorphism group of the free group $\mathrm{Aut}(F_{n})$
on generating tuples $(x_{1},...,x_{n})\in G^{n}$. When identifying
$G^{n}\cong\mathrm{Hom}(F_{n},G)$, this action is simply given by
pre-composition. If $X\in G^{n}$ for some $n\in\mathbb{N}$, we
denote $\left|X\right|=n$ (so that $X\in G^{\left|X\right|}$). Observe
that, if $X$ is a generating tuple, then $\sigma(X)$ is generating
for every $\sigma\in\mathrm{Aut}(F_{\left|X\right|})$. A generating
tuple $X$ is said to be\textit{ Nielsen redundant} if there is some
$\sigma\in\mathrm{Aut}(F_{\left|X\right|})$ such that $\sigma(X)$
is redundant. Otherwise we say that $X$ is \textit{Nielsen irredundant}.
The \emph{Nielsen redundancy rank }is defined by 
\[
\nrr(G)=\sup\left\{ |X|:X\text{ is finite, generating, and Nielsen irredundant}\right\} .
\]
Clearly, \(\mathrm{d}(G)  \le\nrr(G)\le\rr(G)\),
where $\mathrm{d}(G)$ is the minimal size of a generating set. 

In this paper we are mainly interested in Lie groups, as well as algebraic
groups endowed with the Zariski topology. As we shall soon see, such
problems are in fact closely related to analogous problems in finite
groups. 

Our first result is the following.
\begin{thm}
\label{thm:master} There exist $a,b>0$ such that the following dichotomy
holds for any connected Lie group $G$.
\begin{enumerate}
\item \label{enu:amenable}If $G$ is amenable, then $\nrr(G)\le m(G)\le a\left(\dim G\right)^{b}$. 
\item \label{enu:non-amenable}If $G$ is not amenable, then $\nrr(G)=\rr(G)=\infty$. 
\end{enumerate}
\end{thm}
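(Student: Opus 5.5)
We treat the two parts of the dichotomy separately. The inequality $\nrr(G)\le\rr(G)$ is immediate from the definitions, so part \ref{enu:amenable} reduces to bounding $\rr(G)$ for amenable $G$, and part \ref{enu:non-amenable} reduces to showing $\nrr(G)=\infty$ for non-amenable $G$.

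\textbf{Amenable case.} A connected amenable Lie group $G$ has the form $G=R\cdot K$, where $R$ is the solvable radical and $K$ is a compact semisimple group. We bound $\rr$ in three steps.

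First, we show that $\rr$ is subadditive in extensions, up to a controlled error: $\rr(G)\le \rr(G/N)+f(N)$ for a closed connected normal subgroup $N$. Here the correction term $f(N)$ is the length of a chain of closed subgroups of $N$ that are invariant under the action induced by $G$. For $N$ solvable we bound this chain length by a linear function of $\dim N$, using the fact that $N$ is built from at most $\dim N$ abelian layers; each layer is $\R^k\times\T^l$ with $G$ acting linearly, so chains in it are short.

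Second, an irredundant generating set of a compact connected group maps onto a generating set of the finite quotient data. Concretely, reducing to the semisimple compact part $K$, write $K=\mathbf{K}(\R)$ for a simply connected algebraic group. A finite generating set, after a perturbation that keeps it generating, generates a dense subgroup of $K$ contained in $\mathbf{K}(\mathcal{O})$ for some finitely generated ring $\mathcal{O}$. Strong approximation then says that this set surjects onto $\mathbf{K}(\mathbb{F}_q)$ for infinitely many finite fields.

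Third, we argue that irredundancy persists modulo a suitable prime. If $X$ is irredundant, then for each $x\in X$ the closure of $\langle X\setminus\{x\}\rangle$ is a proper closed subgroup. Its Zariski closure, or its image under reduction, is therefore proper in $\mathbf{K}(\mathbb{F}_q)$ for all but finitely many $q$, by strong approximation together with Nori--Weisfeiler-type results on subgroups of $\mathbf{K}(\mathbb{F}_q)$. Since $X$ is finite, a single $q$ works for all $x$ simultaneously, so $X$ remains irredundant in $\mathbf{K}(\mathbb{F}_q)$. The known polynomial bounds on $\rr$ of finite simple groups of Lie type, namely $\rr(\mathbf{K}(\mathbb{F}_q))\le$ poly(rank), uniform in $q$, then finish the argument.

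The main obstacle is this third step. When a proper closed subgroup $H$ of $K$ is dense in some sense, for example infinite and Zariski dense, one must ensure that its reduction is still proper. We would handle this with a case split. If $H$ is not Zariski dense, Nori's theorem applies directly. If $H$ is Zariski dense but not dense in $K$, then $H$ is finite modulo its identity component. Moreover $\conn H$ is a proper connected subgroup, whose Lie algebra gives a proper subgroup under reduction. This may require restricting to primes where $K$ embeds compatibly with the chosen ring $\mathcal{O}$.

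\textbf{Non-amenable case.} Here $G$ surjects onto a non-compact simple Lie group $S$; otherwise $G$ would be amenable, since its Levi factor would be compact. Irredundant generating sets lift along quotients by adding generators, so it suffices to produce large Nielsen irredundant generating sets in $S$. For this we follow Minsky's construction for $\mathrm{PSL}_2(\C)$, adapted to $S$.

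The construction takes a Schottky-type (primitive-stable, or Anosov) representation $\rho:F_n\to S$, chosen so that its image is dense. Primitive-stable representations form an open $\mathrm{Out}(F_n)$-invariant set, and we first find a point of it with dense image by perturbing a Schottky representation near a dense embedding of $F_2$, then spread to $F_n$. Primitive stability forces every primitive element to map to a loxodromic element that generates a discrete subgroup. Hence, for any $\sigma\in\mathrm{Aut}(F_n)$ and any basis element $x_i$, the image of the subgroup $\langle\sigma(x_j):j\neq i\rangle$ is a quasi-convex free factor whose image is discrete, and in particular not dense. Therefore no $\sigma(X)$ is redundant, which gives $\nrr(G)=\rr(G)=\infty$.
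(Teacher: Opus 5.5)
\textbf{Verdict: the proposal has genuine gaps.} The most serious is Step~1 of the amenable case; the non-amenable half has three further gaps.

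\textbf{Amenable case.} Your correction term $f(N)$, the length of a chain of $G$-invariant closed subgroups of $N$, is not bounded by any function of $\dim N$. In a layer on which $G$ acts trivially (e.g.\ $N\cong\R$ or $\T$ central, or $G=N$ abelian) there are chains such as $\{0\}\subset 2^{k}\Z\subset 2^{k-1}\Z\subset\cdots\subset\Z\subset\R$ or $\Z/2\subset\Z/4\subset\cdots\subset\T$, of arbitrary length. So ``chains in $\R^k\times\T^l$ are short'' is false, and your inequality $m(G)\le m(G/N)+f(N)$ already gives nothing for $G=\R$ or $G=\T$. The finiteness of $m$ for connected abelian Lie groups, $m(A)=2\dim A-\dim T$, is a separate result that chain length cannot detect.

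The paper avoids this with the Abels--Noskov machinery. Modulo an absolutely Gasch\"utz quotient and a finite cover, $G$ becomes $(S\times A)\ltimes V$, with $A\cong G/\overline{G'}$ and $V$ a completely reducible module without nonzero fixed vectors. By Abels--Noskov's Lemma~6.4, generators whose $L=S\times A$-components generate $L$ already generate $L\ltimes W$ for a submodule $W$. So the remaining generators matter only through their linear span in $V$, costing at most $\dim V$. All discrete-subgroup phenomena are confined to the abelian factor $A$, which the exact abelian formula handles.

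Your Steps~2--3 are a reasonable alternative to the paper's specialization-plus-Frattini argument. You push irredundancy forward by showing proper Zariski closures stay proper modulo $q$, instead of pulling generation back from $\mathbf{G}(\mathbb{F}_p)$ via the Frattini property of congruence kernels. Three corrections are needed, though:
\begin{itemize}
\item Drop the perturbation. Any finite set already lies in $\mathbf{K}(\mathcal{O})$ for $\mathcal{O}$ generated by its matrix entries, and perturbing can destroy irredundancy.
\item Your case ``Zariski dense but not dense'' is vacuous, since closed subgroups of compact Lie groups are algebraic.
\item You still need a Goursat-type argument for products of several simple factors.
\end{itemize}

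\textbf{Non-amenable case.}
\begin{itemize}
\item[(i)] Perturbing a Schottky representation cannot give dense image, because Schottky (Anosov) representations form an \emph{open} set of discrete representations. The paper instead starts from a primitive-stable representation that is not discrete: Minsky's dense primitive-stable $F_n\to\mathrm{PSL}_2(\R)$ for even $n$. It pushes this into $G$ through a map $\mathrm{SL}_2(\R)\to G$; compatibility of Cartan projections gives $\theta$-primitive stability. It then uses three facts: primitive stability is open, the discrete faithful representations form a closed set $\mathcal{D}$ not containing this point, and $\mathcal{D}\cup\mathcal{E}$ is co-null. Together these yield a nearby primitive-stable representation with dense image.
\item[(ii)] Knowing that each primitive element maps to a loxodromic element does not make $\rho(\Lambda)$ discrete for a proper free factor $\Lambda$. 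You need that $\rho|_{\Lambda}$ is $\theta$-Anosov. This follows from primitive stability: for $a\in\langle s_1,\dots,s_k\rangle$ the word $as_{k+1}$ is primitive and cyclically reduced, which gives uniform linear growth of $\alpha(\mu_G(\rho(a)))$ in $|a|_S$.
\item[(iii)] Nielsen irredundancy does not lift ``by adding generators''. In $S\times\T$, append $(1,t)$ with $t$ of infinite order to a lift $((x_i,1))_i$ of a Nielsen irredundant tuple. Replacing $(x_1,1)$ by $(x_1,t)$ and deleting $(1,t)$ still generates, since $S$ is perfect, so the tuple is Nielsen redundant. What is needed is a generating lift of the same length, i.e.\ the Gasch\"utz lemma for connected Lie groups. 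Then any Nielsen move followed by a deletion in $G$ projects to one in $S$.
\end{itemize}
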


For a visual example, consider the rotation group $\mathrm{SO}(n)$.
It can be generated by two elements, but it is not hard to construct
strictly larger irredundant generating sets. For example, $\mathrm{SO}(3)$
can be generated irredundantly by $3$ elements, and $\mathrm{SO}(4)$
by $6$ elements. By Theorem \ref{thm:master} any generating set
of $\mathrm{SO}(n)$ of size at least a certain polynomial in $n$
must be redundant. Using further results on finite simple groups,
we show below that $\trr(\mathrm{SO}(3))=3$ and $\trr(\mathrm{SO}(4))=6$
(see Theorem \ref{thm:rotations}).

The redundancy rank for abelian $G$ is treated in \cite{CohVig}.
The fact that $\nrr(G)=\infty$ for $G=\mathrm{PSL}_{2}(\R)$ and
$G=\mathrm{PSL}_{2}(\C)$ follows from the work of Minsky \cite{minsky2013dynamics},
see also \cite[Section 9]{Lubotzky2011AutFnRepresentations} for a
beautiful overview. Other than that, Theorem \ref{thm:master} is
novel. \\

The invariant $\mu(G)$ is closely related to several important problems
in group theory \cite{gelander2024aut,Lubotzky2011AutFnRepresentations}.
It can be stated in terms of the connectedness of the product replacement
graph $X_{n}(G)$ of $n$-generating sets, and it is tightly connected
to questions on presentations of groups such as Gruenberg's questions.
The famous conjecture of Wiegold predicts that $\mu(G)=2$ for any
finite simple non-abelian group $G$; a stronger conjecture, due to
Pak \cite{pak2001we}, states that $\nrr(G)=d(G)$ for every finite
group. Gelander similarly conjectured that $\mu(G)=2$ for all connected
compact simple Lie groups, and that $\mu^{\C}(\bG)=2$ for any connected
simple complex algebraic group $\bG$, where $\mu^{\C}(\bG)$ (and
similarly $m^{\C}(\bG)$) denotes the analogous notion where $\bG$
is endowed with the Zariski topology. 

In what follows we will see that these conjectures are in fact more
than just analogous. Let $\bG$ be a connected simple complex algebraic
group. Recall $\mathbf{G}$ admits a structure of a split $\Z$-scheme.
Fixing such a structure, we denote by $\bG_{p}$ the $\mathbb{F}_{p}$-algebraic
group obtained by reduction modulo $p$. Recall also that $\bG$ admits
a unique real compact form $\bG_{c}$, so that $G=\bG_{c}(\R)$ is
a connected compact simple Lie group, and any such group arises in
this manner for a unique complex algebraic group. We show the following.
\begin{thm}
\label{thm:main-algebraic}Let $\bG$ be a simple connected complex
algebraic group, let $G=\bG_{c}(\R)$ be its compact real form, and
let $\tilde{\mathbf{G}}$ be its simply-connected cover. Then
\begin{align}
m(G) & \leq\zr(\bG)\leq\limsup_{p\;\mathrm{prime}}\rr(\tilde{\bG}_{p}(\mathbb{F}_{p})),\label{eq:m(G) bound}\\
\mu(G) & \leq\mu^{\C}(\bG)\leq\limsup_{p\;\mathrm{prime}}\mu(\tilde{\bG}_{p}(\mathbb{F}_{p})).\label{eq:mu(G) bound}
\end{align}
\end{thm}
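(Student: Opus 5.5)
We split each chain of inequalities into a ``compact vs. Zariski'' step and a ``Zariski vs. finite fields'' step. The same arguments handle $\rr$ and $\nrr$; for $\nrr$ we also need that Nielsen transformations commute with the relevant maps.

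\textbf{First step: $m(G)\le\zr(\bG)$ and $\mu(G)\le\mu^{\C}(\bG)$.} View $G=\bG_c(\R)$ as a subgroup of $\bG(\C)$; it is Zariski dense, since the compact real form is Zariski dense in its complexification. Let $H\le G$ be a closed subgroup. A compact subgroup of a real algebraic group is real algebraic, so $H$ is the real points of a real algebraic subgroup $\mathbf{H}\le\bG_c$. Moreover $\overline{H}^{Z}=\mathbf{H}_{\C}$ and $\overline{H}^{Z}\cap G=H$.

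\emph{Topological generation implies Zariski generation.} If $X$ topologically generates $G$, its closure-generated group is $G$, which is Zariski dense in $\bG$. So $X$ Zariski generates $\bG$.

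\emph{Failure of topological generation is preserved.} If $Y\subset G$ does not topologically generate $G$, then $Y\subset H$ for a proper closed subgroup $H$. The Zariski closure $\mathbf{H}_{\C}$ is proper, since $\mathbf{H}_{\C}\cap G=H\ne G$. Hence $Y$ does not Zariski generate $\bG$.

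Combining these, a (Nielsen) irredundant topological generating tuple of $G$ is a (Nielsen) irredundant Zariski generating tuple of $\bG$. Nielsen transformations act identically in both settings, which gives both first inequalities.

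\textbf{Second step: the finite-field bound.} Let $X=(x_1,\dots,x_n)$ be Zariski generating and irredundant in $\bG(\C)$. First we pass to $\tilde\bG$ by lifting $X$ arbitrarily through the isogeny $\tilde\bG\to\bG$. The lifted tuple is still Zariski generating: the image of its closure is all of $\bG$, and a closed subgroup of a connected group surjecting through a central isogeny is everything, since the finite central kernel meets a proper finite-index image only in a way that contradicts connectedness. Irredundancy is preserved because images of non-generating subsets stay non-generating. So we may assume $\bG=\tilde\bG$ is simply connected.

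By a specialization argument we replace $\C$ by a finitely generated ring. Each proper subtuple $X_i=X\setminus\{x_i\}$ lies in a proper algebraic subgroup; by Jordan and the classification of maximal subgroups, it lies in a proper closed subgroup defined by finitely many polynomial conditions. Using spreading out, we choose a finitely generated ring $A\subset\C$ containing the entries of $X$ and all relevant coefficients, together with a specialization $A\to\bar{\Q}$ preserving the following:
\begin{itemize}
\item Zariski density of $\langle X\rangle$, which is an open condition after fixing a word witness, e.g.\ generic elements and non-containment in the finitely many families of maximal subgroups;
\item non-generation by each $X_i$, which is a closed condition.
\end{itemize}
This reduces to $X\subset\tilde\bG(\mathcal{O}_S)$ for a number field $K$. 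Then $\Gamma=\langle X\rangle$ is Zariski dense in the simply connected absolutely simple $\tilde\bG$ and finitely generated. Strong approximation (Weisfeiler, Nori, Pink) gives that for a positive-density (in particular infinite) set of primes $\fp$ of residue degree one, the reduction $\Gamma\to\tilde\bG_p(\mathbb{F}_p)$ is surjective. So the reduction of $X$ generates $\tilde\bG_p(\mathbb{F}_p)$.

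For irredundancy modulo $p$: $\langle X_i\rangle$ is contained in a proper algebraic subgroup $\mathbf{H}_i$ defined over $\mathcal{O}_S$. For large $p$, its reduction is a proper subgroup of $\tilde\bG_p$, so the reduction of $X_i$ does not generate. This yields $\rr(\tilde\bG_p(\mathbb{F}_p))\ge n$ for infinitely many $p$, hence the limsup bound.

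\textbf{The Nielsen version and the main obstacle.} For $\nrr$ the situation is harder. Irredundancy must hold for all $\sigma(X)$, $\sigma\in\mathrm{Aut}(F_n)$ — infinitely many conditions — and they must survive reduction for a single $p$. The remedy is to argue the reverse direction. Suppose that for all large $p$ every generating $n$-tuple of $\tilde\bG_p(\mathbb{F}_p)$ is Nielsen redundant. Take $\bar X$ the reduction of $X$ at a good prime, and $\sigma$ with $\sigma(\bar X)$ redundant. Then $\sigma(X)$ reduces to $\sigma(\bar X)$, and we want to lift redundancy back.

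The difficulty is that a generating subtuple modulo $p$ need not be Zariski generating over $\C$. To fix this we use the uniform form of strong approximation: there is a finite list of proper subvarieties such that, for $p$ large, a subgroup of $\tilde\bG_p(\mathbb{F}_p)$ generated by reductions of a tuple $Y$ is proper if $Y$ lies in one of them (Nori's theorem with bounded complexity). Applying this to subtuples, a subtuple of $\sigma(X)$ that is not Zariski generating yields a non-generating reduction for all $p$ beyond a bound that depends only on $\bG$ and $n$, and not on $\sigma$.

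Checking this uniformity in $\sigma$ is the step I expect to be the main obstacle. Once it is secured, Zariski-irredundance of every $\sigma(X)$ transfers to $\bar X$, giving $\mu^{\C}(\bG)\le\limsup_p\mu(\tilde\bG_p(\mathbb{F}_p))$.
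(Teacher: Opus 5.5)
Your compact-versus-Zariski step and the bound for $\zr$ are essentially fine. Your isogeny argument is even simpler than the paper's commutator argument: a closed $R\le\tilde{\bG}$ with $\pi(R)=\bG$ satisfies $R\ker\pi=\tilde{\bG}$, so $R$ has finite index and equals the connected group $\tilde{\bG}$. Two points need tightening there. The density-preserving specialization is what the paper takes from Larsen--Lubotzky. Non-generation of the reduced $X_i$ needs a Lang--Weil count for the fixed proper subgroups $\mathbf H_i$.

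The genuine gap is in the Nielsen bound $\mu^{\C}(\bG)\le\limsup_p\mu(\tilde{\bG}_p(\mathbb F_p))$, exactly at the step you flag and leave open. You need a prime $\fp$, chosen independently of $\sigma\in\mathrm{Aut}(F_n)$, such that every subtuple $Y$ of every $\sigma(X)$ whose reduction generates $\tilde{\bG}_p(\mathbb F_p)$ is Zariski generating. The ``uniform Nori theorem with finitely many subvarieties'' you invoke is not a precise statement. You also never show that every non-Zariski-dense tuple lies in one of finitely many subvarieties whose reductions are non-generating. So as written this inequality is unproved.

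The missing idea is Lubotzky's ``one for almost all'' trick from Proposition~\ref{prop:generation}. By Weigel, $\ker(\bG(\mathbb Z_p)\to\bG(\mathbb F_p))$ is the Frattini subgroup of $\bG(\mathbb Z_p)$ for all $p$ outside a finite set depending only on $\bG$. Take a residue-degree-one $\fp$ avoiding that set. Any tuple over $A$ whose reduction generates $\bG(\mathbb F_p)$ then topologically generates $\bG(\mathbb Z_p)$, hence is Zariski dense. The exceptional set does not depend on the tuple, so uniformity in $\sigma$ is automatic.

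A route closer to yours also works. A non-dense $\langle Y\rangle$ is either finite, or it preserves the proper nonzero subspace $\mathrm{Lie}(\overline{\langle Y\rangle}^{Z})\subset\mathfrak g$. In the finite case, Jordan gives an abelian normal subgroup of bounded index, which survives reduction. In the infinite case, the subspace reduces to a proper invariant subspace of $\mathfrak g\otimes\mathbb F_p$. Both are impossible for a generating set of $\bG(\mathbb F_p)$ once $p$ is large in terms of $\bG$ alone.

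The same uniformity problem appears one step earlier, and you do not address it. Your specialization $A\to\bar{\Q}$ protects only the finitely many $X_i$, by adjoining the coefficients of the $\mathbf H_i$. Your ``reverse direction'' argument, however, starts from a specialized tuple that must already be Nielsen irredundant. That means no proper subtuple of any $\sigma(\varphi(X))$ may become Zariski dense, which is infinitely many conditions.

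The paper handles this in Lemma~\ref{lem:LL} by extending the specialization to a valuation ring $B$ of $\mathrm{frac}(R)$. Any nonzero polynomial vanishing on a non-dense $\langle Y\rangle$ can be rescaled to have coefficients in $B$ with one coefficient equal to $1$. Its specialization is then still nonzero and vanishes on $\varphi(\langle Y\rangle)$. This works for all $Y$ simultaneously, and since $\varphi$ commutes with $\mathrm{Aut}(F_n)$, Nielsen irredundance is preserved.
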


We thus observe the following implications between the conjectures:
\[
\mathrm{W}\Rightarrow\bG\Rightarrow G,
\]
where $\mathrm{W}$ stands for the Wiegold conjecture that $\mu(G)=2$
for all non-abelian finite simple groups, $\mathbf{G}$ stands for
Gelander's conjecture that $\mu^{\C}(\bG)=2$ for every simple connected
complex algebraic group $\mathbf{G}$, and $G$ stands for Gelander's
conjecture that $\mu(G)=2$ for every connected compact simple Lie
group $G$ (see Conjecture \ref{conj:gelander-1} below).

Theorem \ref{thm:main-algebraic} allows us to use the existing literature
on the (Nielsen) redundancy rank of finite simple groups in order
to obtain novel results for Lie groups and algebraic groups. For example,
a recent theorem by Harper \cite{harper2023maximal} states that $\rr(\bG_{p}(\mathbb{F}_{p}))\leq a\left(\mathrm{rank}(\bG)\right)^{b}$
for every $\mathbf{G}$ and $p$ (with $a=10^{5}$ and $b=10$). We
thus obtain the bounds in Theorem \ref{thm:master}(\ref{enu:amenable}),
in the case $G$ is compact. 

With additional work, we obtain the following theorem on reductive
groups. 
\begin{thm}
\label{thm:AmenableAndAlgebraic}For every connected reductive complex
algebraic group $\mathbf{G}$, 
\[
\mu^{\C}(\bG)\leq\zr(\mathbf{G})\le a\cdot\rank(\bG)^{b}<\infty.
\]
\end{thm}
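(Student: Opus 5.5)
The first inequality $\mu^{\C}(\bG)\le\zr(\bG)$ is formal: a Nielsen irredundant generating tuple is in particular irredundant. The content is the polynomial bound on $\zr(\bG)$ for a connected reductive $\bG$. I would write $\bG=\bZ\cdot[\bG,\bG]$, where $\bZ=Z(\bG)^{0}$ is a torus of dimension $r_{0}$ and $[\bG,\bG]$ is semisimple of rank $r_{1}$, so that $r_{0}+r_{1}=\rank(\bG)$. The plan is to bound $\zr$ separately for a semisimple group and for a torus, then combine them through an ``extension'' inequality for irredundant generating sets.

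\textbf{Semisimple part.} Let $\bG'=[\bG,\bG]$, with simply connected cover $\prod_{i}\tilde{\bG}_{i}$ and $\tilde{\bG}_{i}$ simple. For a simple factor, Theorem \ref{thm:main-algebraic} together with Harper's bound gives
\[
\zr(\bG_{i})\le\limsup_{p}\rr(\tilde{\bG}_{i,p}(\mathbb{F}_{p}))\le a\,\rank(\bG_{i})^{b}.
\]
For a product I would repeat the strong approximation argument of Theorem \ref{thm:main-algebraic} directly for $\prod_{i}\tilde{\bG}_{i}$. Take an irredundant Zariski-generating tuple $X$. After specialising to a finitely generated ring and reducing modulo a suitable prime $p$, the image of $X$ generates $\prod_{i}\tilde{\bG}_{i,p}(\mathbb{F}_{p})$, and for each $x\in X$ the set $X\setminus\{x\}$ still fails to generate after reduction. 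This follows from strong approximation (Nori/Weisfeiler) applied to the Zariski closure of $\langle X\setminus\{x\}\rangle$, and it needs a small argument about central quotients. Harper's bound, or its routine extension to products of quasisimple groups via Whiston-type subadditivity $\rr(A\times B)\le\rr(A)+\rr(B)$ when $A,B$ have no common quotient, then gives $\zr(\bG')\le\sum_{i}a\,\rank(\bG_{i})^{b}\le a\,r_{1}^{b}$. Isomorphic factors do have common quotients, so there I would instead rely on Harper's argument, which bounds the sizes of chains of subgroups and passes to products.

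\textbf{Torus part.} For $\bZ\cong\mathbb{G}_{m}^{r_{0}}$, the bound $\zr(\bZ)\le r_{0}$ or something linear should follow from the abelian treatment in \cite{CohVig}. Zariski-closed subgroups of a torus correspond, by duality, to subgroups of the character lattice $\Z^{r_{0}}$. An irredundant generating set then yields a family of $n$ subgroups of $\Z^{r_{0}}$, one for each $x$, each annihilating $X\setminus\{x\}$ but not $x$. Passing to a quotient $(\Z/p)^{r_{0}}$ for suitable $p$, this forces $n\le r_{0}$, since in an elementary abelian group the redundancy rank is the dimension.

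\textbf{Combining.} This is the step I expect to be the main obstacle. Irredundancy is not obviously preserved under extensions, because $X$ could generate $\bG$ while its image in $\bG/\bG'$ is highly redundant. I would prove $\zr(\bG)\le\zr(\bG/\bG')+\zr'(\bG')$, where $\zr'$ is a variant that counts irredundant sets generating $\bG'$ \emph{modulo} being normalised by a given element set. I would try the standard finite-group trick. Choose a minimal subset $Y\subset X$ whose image generates $\bG/\bG'$, so $|Y|\le r_{0}$. Then, for each $x\notin Y$, irredundancy yields a proper Zariski-closed subgroup $H_{x}\supseteq X\setminus\{x\}$ containing $Y$, and hence surjecting onto the torus quotient. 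Intersecting these $H_{x}$ with $\bG'$ gives a family of proper subgroups of $\bG'$ that witnesses irredundancy of a suitable set in $\bG'$, namely the commutators and conjugates generated by $X\setminus Y$. Reducing modulo $p$ as above, this becomes a statement about the finite group $\bG_{p}(\mathbb{F}_{p})$ and its derived subgroup. There Harper's chain-length bound (and the Lucchini--Cameron type inequality $\rr(G)\le\rr(N)+\rr(G/N)$ for such bounds) gives $|X\setminus Y|\le a\,r_{1}^{b}$. Altogether $\zr(\bG)\le r_{0}+a\,r_{1}^{b}\le a\,\rank(\bG)^{b}$ after enlarging $a$.
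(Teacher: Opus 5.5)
There is a genuine gap in the combining step. Your first inequality is fine, and so is the bound for a single simple factor via Theorem \ref{thm:main-algebraic} and Harper. The torus bound $\zr(\mathbf{G}_m^{r_0})=r_0$ also holds, and no reduction mod $p$ is needed there: each $\chi_x(x)$ has infinite order because $X$ generates and $\Z^{r_0}$ is torsion-free, so the $\chi_x$, with disjoint ``supports'', are linearly independent.

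\emph{Why the combining step fails.} Reducing the \emph{reductive} group modulo $p$ cannot work.
\begin{itemize}
\item The implication $1\Rightarrow2$ of Proposition \ref{prop:generation} fails for tori. The element $4$ Zariski generates $\mathbf{G}_m$, but its reduction is a square in $\mathbb{F}_p^\times$ for every odd $p$. Hence Zariski-generating tuples of $\mathrm{GL}_n$ with all determinants equal to $4$ generate $\mathrm{GL}_n(\mathbb{F}_p)$ for no odd $p$.
\item The abelian quotient of $\bG_p(\mathbb{F}_p)$ (for example $\mathbb{F}_p^\times$ for $\mathrm{GL}_n$) has $\rr(\mathbb{F}_p^\times)=\omega(p-1)$, which is unbounded in $p$. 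So even granting an inequality $\rr(G)\le\rr(N)+\rr(G/N)$ for finite groups, you would get no $p$-independent bound.
\item The elements of $\bG'$ you extract from $X\setminus Y$ (say $z_x=xy_x^{-1}$ with $y_x\in\overline{\langle Y\rangle}$) are only independent \emph{relative to} $\overline{\langle Y\rangle}\cap\bG'$, and they need not generate $\bG'$. Plain independent sets in $\bG'$ are unbounded: take roots of unity of distinct prime orders in a torus of $\mathrm{SL}_2$. Bounding your relative invariant is exactly the content that is missing.
\item Harper's theorem is not a chain-length bound. Subgroup chains in $\bG_p(\mathbb{F}_p)$ have length growing with $p$; a split torus alone contributes $\Omega(p-1)$. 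This also invalidates your fallback for isomorphic simple factors.
\end{itemize}

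\emph{The missing idea} is a Goursat-type argument in characteristic zero. It makes the combination trivial and does not care how redundant the images of $X$ are.
\begin{itemize}
\item Put $\bZ=Z(\bG)^{0}$ and $\bG'=[\bG,\bG]$, and let $X$ be Zariski generating with $|X|>\dim\bZ+\zr(\bG')$.
\item Choose $S_1\subseteq X$ with $|S_1|\le\dim\bZ$ whose image generates the torus $\bG/\bG'$.
\item Choose $S_2\subseteq X$ with $|S_2|\le\zr(\bG/\bZ)=\zr(\bG')$ whose image generates $\bG/\bZ$. The equality holds because finite central isogenies of connected groups preserve Zariski generation.
\item Let $R$ be the Zariski closure of $\langle S_1\cup S_2\rangle$. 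Then $R\bZ=\bG$, so $\bG'=[R\bZ,R\bZ]=[R,R]\subseteq R$ since $\bZ$ is central.
\item Since $R$ also maps onto $\bG/\bG'$, we get $R=\bG$, and $X$ is redundant.
\end{itemize}
This is the paper's lemma that a subgroup of $(\C^\times)^r\times G$ with $G$ perfect is Zariski dense iff both projections are, applied after the isogeny $\bZ\times\bG'\to\bG$. In your notation it says $Y\cup S_2$ already generates, so $|X\setminus Y|\le\zr(\bG')$.

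The same device handles products of simple factors, isomorphic or not. For adjoint simple $G_1,G_2$, a proper closed subgroup of $G_1\times G_2$ projecting onto both factors is the graph of an isomorphism. So at most one element beyond $S_1\cup S_2$ is needed, and $\zr(\bG_1\times\bG_2)=\zr(\bG_1)+\zr(\bG_2)$. Strong approximation is then needed only for a single simply connected absolutely almost simple factor, as in Section \ref{sec:Simple-Groups}.
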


To put these results in better context, it is worth recalling Gelander's
conjectures as stated in \cite{gelander2024aut}:
\begin{conjecture}[{Gelander \cite[Questions 1.1, 2.9 and Conjecture 4.2]{gelander2024aut}}]
\label{conj:gelander-1} Let $n\geq3$, and let $F_{n}$ be the free
group on $n$ generators. Let $G$ be a connected compact simple Lie
group (respectively a connected simple $\mathbb{C}$-algebraic group).
Then, for any homomorphism $f:F_{n}\to G$ with dense (resp. Zariski
dense) image, there exists a non-trivial free product decomposition
$F_{n}=A*B$ such that $f(A)\subset G$ is dense (resp. Zariski dense).
\end{conjecture}

Thus, an immediate consequence of Theorems \ref{thm:AmenableAndAlgebraic}
and \ref{thm:master} is:
\begin{cor}
\label{cor:conj-gelander}Conjecture \ref{conj:gelander-1} holds
for any $n\in\N$ larger than a fixed polynomial in the complex rank
of $G$. 
\end{cor}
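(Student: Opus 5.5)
The plan is to translate the conjecture's conclusion into a statement about Nielsen redundancy, and then apply the polynomial bounds on $\nrr$ from Theorem \ref{thm:master}(\ref{enu:amenable}) and Theorem \ref{thm:AmenableAndAlgebraic}.

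First, fix a homomorphism $f:F_{n}\to G$ with dense (resp. Zariski dense) image, and let $x_{1},\dots,x_{n}$ be a free basis of $F_{n}$. Then $X=(f(x_{1}),\dots,f(x_{n}))\in G^{n}$ is a generating tuple in the relevant topology. Suppose $n$ exceeds $\nrr(G)$ (resp. $\nrr^{\C}(\bG)$). Then $X$ is Nielsen redundant, so there is $\sigma\in\mathrm{Aut}(F_{n})$ such that $\sigma(X)$ is redundant. Identifying $G^{n}\cong\mathrm{Hom}(F_{n},G)$, we have $\sigma(X)=f\circ\sigma$. This tuple is $(f(y_{1}),\dots,f(y_{n}))$, where $y_{i}=\sigma(x_{i})$ is again a free basis of $F_{n}$.

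Redundancy means some proper subset of $\{f(y_{i})\}$ still generates. Say, after reindexing, $f(y_{1}),\dots,f(y_{n-1})$ generate topologically (resp. Zariski). Put $A=\langle y_{1},\dots,y_{n-1}\rangle$ and $B=\langle y_{n}\rangle$. Since the $y_{i}$ form a free basis, $F_{n}=A*B$ is a non-trivial free product decomposition, and $f(A)$ is dense (resp. Zariski dense). One small point needs care: redundancy of a tuple is phrased via subsets. If the tuple has repeated entries, dropping one coordinate still gives a proper sub-tuple whose image set generates. So the decomposition above is always available.

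It remains to bound the threshold. In the compact case, $G$ is a connected compact Lie group, hence amenable. Theorem \ref{thm:master}(\ref{enu:amenable}) gives $\nrr(G)\le a(\dim G)^{b}$. Since $\dim G$ is bounded by a polynomial in $\rank(G)$ (for a simple group, $\dim G\le 2\rank(G)^{2}+\rank(G)$ or so, via the classification), this is polynomial in the complex rank. Alternatively, one can use Theorem \ref{thm:main-algebraic} together with Harper's bound. In the algebraic case, a connected simple group is reductive, so Theorem \ref{thm:AmenableAndAlgebraic} gives $\nrr^{\C}(\bG)\le a\cdot\rank(\bG)^{b}$ directly.

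I expect no real obstacle here. The only steps needing any care are the dictionary between redundancy after a Nielsen move and free product decompositions, and the conversion from a dimension bound to a rank bound.
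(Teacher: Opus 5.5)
Your proposal is correct and follows the paper's approach: the paper records this corollary as an immediate consequence of the polynomial bounds on $\nrr(G)$ and $\nrr^{\C}(\bG)$ from Theorems \ref{thm:master} and \ref{thm:AmenableAndAlgebraic}, together with the dictionary you spell out between Nielsen redundancy of $(f(x_1),\dots,f(x_n))$ and a splitting $F_n=A*B$ with $f(A)$ (Zariski) dense. One small correction: the inequality $\dim G\le 2\rank(G)^2+\rank(G)$ fails for exceptional types (e.g.\ $E_8$ has dimension $248$ and rank $8$), but $\dim G\le 4\rank(G)^2$ holds for every simple type; alternatively, Theorem \ref{thm:compact-case} gives $\nrr(G)\le\rr(G)\le 10^{5}\rank(G)^{10}$ directly, so no dimension-to-rank conversion is needed.
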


Corollary \ref{cor:conj-gelander} was independently obtained in \cite{CDMB}
with completely different methods: their proof mirrors the arguments
on finite groups via Jordan's theorem, whereas we reduce to the case
of finite simple groups of Lie type via strong approximation.\\

Using further results on finite simple groups, such as \cite{gilman1977finite,jambor2013minimal},
we are able to compute the (Nielsen) redundancy rank of several groups.
In particular, we show that Conjecture \ref{conj:gelander-1} holds
completely for $G=\mathrm{SO}(3)$ and $\mathbf{G}=\mathrm{SL}_{2}$
(for every $n\geqslant3$).
\begin{thm}
\label{thm:rotations}We have 
\end{thm}

\begin{itemize}
\item $\nrr^{\mathbb{C}}(\mathrm{SL}_{2})=\mu(\mathrm{SO}(3))=2$; equivalently,
Gelander's conjecture holds for these groups.
\item $\zr(\mathrm{SL}_{2})=\rr(\mathrm{SO}(3))=3$.
\item $\trr(\mathrm{U}(2))=4$.
\item $\trr(\mathrm{SO}(4))=6$.
\item $m(\mathrm{SU}(3))\leq6$.
\end{itemize}
An interesting question is whether one can obtain inequalities in
the reverse direction to Theorem \ref{thm:main-algebraic}. In other
words, whether the (Nielsen) redundancy rank of finite simple groups
can be bounded from above in terms of the (Nielsen) redundancy rank
of connected complex simple algebraic groups.\\

In order to establish Theorem \ref{thm:master}(\ref{enu:non-amenable}),
we consider primitive-stable representations, introduced by Minsky
for rank-one groups in \cite{minsky2013dynamics} and extended to
higher rank in \cite{kim2021primitive}. Along the way, we show the
following.
\begin{thm}
\label{Thm:domain of discontinuity}Let $G$ be a connected semisimple
linear Lie group without compact factors, and let $n\geq2$ even.
Consider the action of $\mathrm{Out}(F_{n})$ on the character variety
$\mathcal{X}(F_{n},G)=\mathrm{Hom}(F_{n},G)//G$. Then the domain
of discontinuity of this action contains an open set of dense primitive-stable
representations. 
\end{thm}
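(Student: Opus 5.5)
We prove Theorem \ref{Thm:domain of discontinuity} in three steps: a Schottky-type construction, openness and properness of primitive-stability, and then the $\mathrm{Out}(F_n)$ action. The route follows Minsky's rank-one argument \cite{minsky2013dynamics}, with the higher-rank notion of primitive-stability from \cite{kim2021primitive}, which replaces quasi-geodesics in $\mathbb{H}^2$ or $\mathbb{H}^3$ by Morse (uniformly regular, $\theta$-undistorted) quasi-geodesics in the symmetric space $X=G/K$.

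\emph{Step 1: a dense primitive-stable representation.} Since $n$ is even, write $F_n=\pi_1(S)$, where $S$ is a compact surface with one boundary component and genus $n/2$. Its boundary word $w=[a_1,b_1]\cdots[a_{n/2},b_{n/2}]$ is not primitive. Moreover, every primitive element is represented by a closed curve on $S$ whose cyclic word is not "blocked" by $w$. This is the combinatorial input of Minsky's construction: the Whitehead graph of any cyclically reduced primitive word has a cut vertex, which is not the case for $w$.

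Next fix a rank-one (or, more generally, $P_\theta$-Anosov) embedding $\iota:\mathrm{PSL}_2(\R)\to G$, for instance through a principal $\mathfrak{sl}_2$. Consider a representation $\rho_0=\iota\circ\eta$, where $\eta$ is a Fuchsian representation of $\pi_1(S)$ in which the boundary curve is \emph{parabolic}, i.e. a finite-area punctured hyperbolic surface. Minsky shows $\eta$ is primitive-stable in $\mathrm{PSL}_2(\R)$. Since $\iota$ is a quasi-isometric, $\theta$-regular embedding of $\mathbb{H}^2$ into $X$, the composition $\rho_0$ is primitive-stable in the sense of \cite{kim2021primitive}. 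To verify this, check uniform Morse-ness of the orbit maps restricted to axes of primitive elements: these axes stay in a compact region of the convex core away from the cusp, since they are not contained in cusp neighbourhoods.

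\emph{Step 2: openness and density.} Primitive-stability is an open condition in $\mathrm{Hom}(F_n,G)$, because local-to-global principles for Morse quasi-geodesics (Kapovich--Leeb--Porti) make uniform Morse conditions open. It is also invariant under $\mathrm{Aut}(F_n)$ and under conjugation. The set of $\rho$ with dense image in a connected semisimple $G$ without compact factors is also open and nonempty: by a Zassenhaus-neighbourhood argument, near any $\rho$ whose image generates a dense subgroup, the image of a small perturbation still generates a dense subgroup. It is nonempty because $G$ has dense free subgroups on $n\ge2$ generators, by Breuillard--Gelander.

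The intersection of these two open sets must be nonempty. Here is where I expect the main difficulty. My plan is to perturb $\rho_0$ inside the open primitive-stable neighbourhood $U$ of $\rho_0$ to reach a representation with dense image. The image of $\rho_0$ is discrete, but $U$ is a nonempty open subset of the irreducible variety $G^n$, since $G$ is connected. The set of representations that are not Zariski dense is contained in a countable union of proper subvarieties, so a generic $\rho\in U$ is Zariski dense. To upgrade from Zariski dense to topologically dense, I would use the Breuillard--Gelander result that, in a connected semisimple $G$, dense $n$-tuples form an open set whose closure meets every nonempty open set containing a Zariski-dense tuple in a suitable sense. Failing that, a more hands-on route is to replace one generator $\rho(a_1)$ by a small perturbation $\rho(a_1)g$, with $g$ close to the identity. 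By choosing $g$ from a dense subgroup near identity, for example generic elements in a one-parameter family, the image acquires a nondiscrete Zariski-dense subgroup, whose closure is then open, hence all of $G$ since $G$ is connected.

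\emph{Step 3: discontinuity.} Passing to the quotient, the image in $\mathcal{X}(F_n,G)$ of the open set of dense primitive-stable representations is open; this uses the fact that dense representations are reductive, so the quotient map is open there. It remains to show that $\mathrm{Out}(F_n)$ acts properly on the primitive-stable locus, following \cite[Lemma 3.2]{minsky2013dynamics}. Given a compact set $C$, uniform Morse constants on $C$ give a bound $\ell_\rho(\gamma)\ge c|\gamma|-c'$ on the translation lengths of primitive elements $\gamma$. If $\phi\in\mathrm{Out}(F_n)$ satisfies $\phi(C)\cap C\neq\emptyset$, then the primitive words $\phi(a_i)$ and $\phi(a_ia_j)$ have translation lengths bounded by continuity on $C$. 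Their word lengths are therefore bounded, so only finitely many $\phi$ can occur.
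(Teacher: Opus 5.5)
Steps 1 and 3 are sound in outline. Step 3 is essentially the properness statement of \cite[Theorem 1.1]{kim2021primitive}, which the paper simply cites. Step 2, the one you flag yourself, has a genuine gap, and it comes from your choice of starting point. Your $\rho_0=\iota\circ\eta$ is discrete and faithful, and arbitrarily close to it there are whole open sets of discrete representations. To see this, open the cusp of $\eta$ into a short geodesic boundary to get a convex-cocompact Fuchsian representation, compose with $\iota$ to get an Anosov representation, and use openness of the Anosov condition. A generic member of such an open set is Zariski dense but discrete.

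So generic Zariski density in $U$ buys nothing. The ``Breuillard--Gelander result'' you invoke is false as stated: a Zariski-dense Schottky tuple has a neighbourhood containing no dense tuple. The hands-on route fails too. Replacing $\rho(a_1)$ by $\rho(a_1)g$ does not put $g$, or any element near the identity, into the image, so nothing forces non-discreteness; small perturbations of Schottky tuples stay Schottky whatever $g$ is. Moreover, a non-discrete Zariski-dense subgroup need not be dense when $G$ has several simple factors (e.g.\ a dense subgroup of one factor times a lattice in another). So ``the closure is open'' is only valid for simple $G$.

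The missing idea is to start from a \emph{non-discrete} primitive-stable representation and then use a measure argument. Minsky's construction does exactly this: perturb the cusped $\eta$ so that $w$ becomes elliptic of infinite order. By openness the result is still primitive-stable, and it has dense image in $\mathrm{PSL}_2(\R)$.

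The paper lifts this to $\mathrm{SL}_2(\R)$. You need this in Step 1 as well, since an $\mathfrak{sl}_2(\R)\subset\mathfrak{g}$ need not integrate to a homomorphism from $\mathrm{PSL}_2(\R)$ (e.g.\ for $G=\mathrm{SL}_2(\R)$). It then composes with $f:\mathrm{SL}_2(\R)\to G$. By Lemma \ref{lem:anosov passes to supgroups} this gives a $\theta$-primitive-stable $\rho$ whose image is dense in $f(\mathrm{SL}_2(\R))$, hence not discrete, so $\rho\notin\mathcal{D}$.

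Since the set $\mathcal{D}$ of discrete faithful representations is closed, a primitive-stable neighbourhood $U$ of $\rho$ can be chosen disjoint from $\mathcal{D}$. Then $U$ has positive Haar measure and $\mathcal{D}\cup\mathcal{E}$ is conull (Minsky, Breuillard--Gelander), so $U\cap\mathcal{E}\neq\varnothing$. With this replacing your Steps 1--2, your Step 3 completes the proof.
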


This extends the result of Minsky from rank 1 to higher rank, and
extends \cite[Theorem 1.1]{kim2021primitive}, which states that the
domain of discontinuity contains more than just Anosov representations.
Theorem \ref{Thm:domain of discontinuity} shows that the domain of
discontinuity goes beyond discrete faithful representations. 

In this paper, we have considered connected Lie groups. The case of
algebraic groups over $p$-adic fields (with the $p$-adic topology)
is also of considerable interest. It has been shown to exhibit behavior
quite different from the Archimedean case, a phenomenon sometimes
referred to as the \textquotedblleft Yair dichotomy\textquotedblright{}
\cites{glasner2009zeroone}{Lubotzky2011AutFnRepresentations}{gelander2024aut}.
It would therefore be interesting to understand to what extent the
results of this paper admit analogues in that setting.

\subsection*{Acknowledgments}

We would like to thank Nir Avni, Guy Kapon, Inkang Kim, Sungwoon Kim,
Alexander Lubotzky and Yair Minsky for valuable discussions. The first
author was co-funded by the European Union (ERC, Function Fields,
101161909). The second author was supported by NSF postdoctoral fellowship
grant DMS-2402368.

\section{Simple Algebraic Groups\label{sec:Simple-Groups}}

\subsection{Simple Simply-Connected Algebraic Groups\label{subsec:Simple-Simply-Connected-Algebrai}}

The purpose of this section is to prove Theorem \ref{thm:main-algebraic}
under the additional assumption that the algebraic group $\bG$ is
absolutely almost simple and simply-connected. We start by further
restricting to the case where the generating sets are required to
belong to an $S$-arithmetic subgroup.

\subsubsection{The Arithmetic Case}

Let $F$ be a finite Galois extension of $\Q$, and $\mathcal{O}_{F}$
its ring of integers. Fix a finite set $S$ of non-zero prime ideals
in $\mathcal{O}_{F}$ and set $A=\mathcal{O}_{F}S^{-1}$, the localisation
of $\mathcal{O}_{F}$ at $S$. Let $\mathcal{P}(A)$ denote the set
of non-zero prime ideals in $A$ and let $\mathcal{P}_{1}(A)\subset\mathcal{P}(A)$
consist of those primes with residue degree $1$, namely those $\mathfrak{p}\in\mathcal{P}(A)$
for which the field $A/\mathfrak{p}$ is prime. By the Chebotarev
density theorem the set $\mathcal{P}_{1}(A)$ is infinite. 

Let $\bG$ be a connected, simply-connected, absolutely almost simple
group scheme defined over $\Z$. Given $\mathfrak{p}\in\mathcal{P}(A)$,
let $\bG_{\mathfrak{p}}$ denote the algebraic group over the field
$A/\mathfrak{p}$ obtained via extension of scalars to $A$ and reduction
to $A/\mathfrak{p}$. If $\mathfrak{p}\in\mathcal{P}_{1}(A)$ we denote
$\bG_{p}=\bG_{\mathfrak{p}}$, where $p=|A/\mathfrak{p}|$.

In what follows, for a given $X\in\bG(A)^{n}$, we denote by $X_{\mathfrak{p}}\in\bG(A/\mathfrak{p})^{n}$
the corresponding tuple after applying the map $\bG(A)\to\bG_{\mathfrak{p}}(A/\mathfrak{p})$
entrywise. The following proposition is a variation of Lubotzky's
`one for almost all' trick \cite{lubotzky1999one}.
\begin{prop}
\label{prop:generation}There exists an exceptional finite set of
prime ideals $\mathcal{Q}\subset\mathcal{P}(A)$, depending only on
$\bG$ and $A$, such that the following statements are equivalent
for $X\in\bG(A)^{n}$.
\begin{enumerate}
\item $X$ Zariski generates $\bG$. 
\item $X_{\mathfrak{p}}$ generates $\bG_{\mathfrak{p}}(A/\mathfrak{p})$
for almost every $\mathfrak{p}\in\mathcal{P}_{1}(A)$. 
\item $X_{\mathfrak{p}}$ generates $\bG_{\mathfrak{p}}(A/\mathfrak{p})$
for some $\mathfrak{p}\in\mathcal{P}_{1}(A)\backslash\mathcal{Q}$.
\end{enumerate}
\end{prop}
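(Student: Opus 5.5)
The plan is to prove the cycle (1)$\Rightarrow$(2)$\Rightarrow$(3)$\Rightarrow$(1). The step (2)$\Rightarrow$(3) is immediate, because $\mathcal{P}_{1}(A)$ is infinite by Chebotarev and $\mathcal{Q}$ is finite.

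For (1)$\Rightarrow$(2) we use strong approximation. Let $\Gamma=\langle X\rangle\le\bG(A)$ and suppose it is Zariski dense in $\bG$. By Weisfeiler's strong approximation theorem (in the form of Pink, or Matthews--Vaserstein--Weisfeiler for $\Gamma$ inside an arithmetic group), $\bG$ is simply connected and absolutely almost simple. Moreover, the trace field of $\mathrm{Ad}(\Gamma)$ is a subfield $F'\subseteq F$, and $\Gamma$ is contained in $\bG'(A')$ for a form $\bG'$ over $F'$. The theorem then gives that the closure of $\Gamma$ in $\prod_{\mathfrak{p}\notin T}\bG(A_{\mathfrak{p}})$ is open for some finite set $T$.

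For primes $\mathfrak{p}$ of residue degree $1$ over $\Q$, the residue field of $\mathfrak{p}\cap F'$ is also $\mathbb{F}_{p}$. Hence for almost all such $\mathfrak{p}$ the reduction of $\Gamma$ is all of $\bG_{\mathfrak{p}}(\mathbb{F}_{p})$, with no subfield issue arising. This is the reason for restricting to $\mathcal{P}_{1}(A)$, and it yields (2). The finitely many excluded primes depend on $\Gamma$; that is harmless here, since (2) only asks for almost every $\mathfrak{p}$.

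The heart of the argument is (3)$\Rightarrow$(1), and here the finite set $\mathcal{Q}$ must be chosen uniformly in $X$. Let $\mathbf{H}$ be the Zariski closure of $\Gamma$ and suppose $\mathbf{H}\ne\bG$. We want to show that $X_{\mathfrak{p}}$ generates a proper subgroup for every $\mathfrak{p}\notin\mathcal{Q}$. The plan follows Lubotzky's ``one for almost all'' trick, which uses the finitely many maximal subgroups up to conjugacy and exploits that they have bounded complexity. Concretely:
\begin{itemize}
\item[(a)] Choose a faithful representation $\bG\to\mathrm{GL}_{N}$ over $\Z$ (or over $A$), fixed once and for all.
\item[(b)] Use Nori's theorem (or Larsen--Pink): for $p$ larger than a bound depending only on $N$, every subgroup of $\mathrm{GL}_{N}(\mathbb{F}_{p})$ generated by elements of order $p$ is the group $\mathbf{H}'(\mathbb{F}_{p})^{+}$ of an $\mathbb{F}_{p}$-subgroup $\mathbf{H}'$ of bounded complexity. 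For large $p$, a subgroup $\Delta\le\bG(\mathbb{F}_{p})$ that equals the whole group is not contained in any proper subgroup of bounded complexity.
\item[(c)] Alternatively, and more directly: $\mathbf{H}$ proper means $\Gamma$ lies in a proper closed subgroup. Such a subgroup is either contained in a proper parabolic, or in the normaliser of a proper semisimple or finite subgroup, or it acts reducibly on the adjoint representation.
\end{itemize}

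A cleaner uniform route uses the adjoint representation $\mathfrak{g}$. Since $\bG$ is absolutely almost simple, $\Gamma$ is Zariski dense if and only if $\Gamma$ is not virtually contained in a proper subgroup. By Jordan and Chevalley, this is controlled by invariants of bounded degree $D=D(\bG)$. Concretely, $\mathbf{H}\neq\bG$ if and only if some nonzero vector in $W=\bigoplus_{d\le D}\mathrm{Sym}^{d}(\mathfrak{g}\oplus\mathfrak{g}^{*})$, a fixed $\bG$-module over $A$, is fixed by $\Gamma$ up to scalar, while not being fixed up to scalar by all of $\bG$. This is Chevalley's theorem, with a uniform degree bound for subgroups of a fixed group. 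Fixed lines are cut out by polynomial equations in the entries of $X$ with $A$-coefficients, so they reduce modulo every $\mathfrak{p}$. A line over $A$ reduces to a line modulo $\mathfrak{p}$ after saturating in the lattice $W_{A}$. Thus $X_{\mathfrak{p}}$ stabilises a line in $W_{\mathfrak{p}}$.

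Finally, we take $\mathcal{Q}$ to be the primes $\mathfrak{p}$ for which $\bG_{\mathfrak{p}}(\mathbb{F}_{p})$ fixes a line in $\overline{W}_{\mathfrak{p}}$ that is not already stabilised by $\bG$ over $A$ (i.e.\ where $W$ fails to reduce well). For large $p$, the lines fixed by $\bG(\mathbb{F}_{p})$ in $W_{\mathfrak{p}}$ are exactly the reductions of those fixed by $\bG$ in $W$; this holds by Nori/Steinberg restricted-weight theory, since $D$ is bounded and $p\gg D$. Therefore $X_{\mathfrak{p}}$ cannot generate $\bG_{\mathfrak{p}}(\mathbb{F}_{p})$ when $\mathfrak{p}\notin\mathcal{Q}$. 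The main obstacle is establishing this uniform line-detection statement, namely the bounded degree $D$ and the comparison of invariants in characteristic $p$. If that proves delicate, the fallback is to invoke Nori's theorem directly to obtain the uniform exceptional set.
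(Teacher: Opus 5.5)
\textbf{There is a genuine gap in (3)$\Rightarrow$(1).} Your steps (2)$\Rightarrow$(3) and (1)$\Rightarrow$(2) are fine. The latter is strong approximation, and your remark that residue degree one keeps the reduction out of subfield subgroups is correct.

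The gap is in (3)$\Rightarrow$(1), the only step where uniformity of $\mathcal{Q}$ in $X$ matters. There you sketch several strategies without completing any, and the route you develop fails at two points.

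\emph{The uniform module $W$ is not justified.} A single module $W$ in which every proper closed subgroup of $\bG$ fixes a line not fixed by $\bG$ does not come from ``Chevalley's theorem with a uniform degree bound''. No such bound exists. Line stabilisers in a fixed module are cut out by equations of bounded degree, so they have boundedly many components, whereas $\mu_m\subset\mathrm{SL}_2$ has $m$ of them. What you actually need is that $\bG$ has only finitely many conjugacy classes of maximal closed subgroups: parabolics, normalisers of connected reductive subgroups, and Lie-primitive finite subgroups (of bounded order by Jordan's theorem). This is a substantial input that you neither prove nor cite.

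\emph{The final step is a non sequitur, even granting $W$.} Knowing that the $\bG(\mathbb{F}_p)$-fixed lines of $W_{\mathfrak{p}}$ are reductions of $\bG$-fixed lines does not stop the reduction of your $\Gamma$-fixed line $L$ from being one of them. For example, suppose $\Gamma$ fixes $w_1\notin W^{\bG}$ and $0\neq w_0\in W^{\bG}$. Then $L=\langle w_0+aw_1\rangle$ is $\Gamma$-fixed and not $\bG$-fixed, yet it reduces to the $\bG$-fixed line $\langle\bar w_0\rangle$ at every $\mathfrak{p}\ni a$. To repair this you would need all of the following:
\begin{itemize}
\item project $L$ onto a $\bG$-stable complement $W'$ of $W^{\bG}$ (allowed, since $\Gamma$ preserves the decomposition and acts on $L$ by a character);
\item use that $W_A=W^{\bG}_A\oplus W'_A$ away from finitely many primes;
\item allow $L$ to be defined only over a finite extension of $F$;
\item show that $W'\otimes\overline{\mathbb{F}}_p$ has no $\bG(\mathbb{F}_p)$-invariant lines for $p\gg0$, via complete reducibility for small weights, Steinberg's restriction theorem and perfectness of $\bG(\mathbb{F}_p)$.
\end{itemize}
None of this appears, and the ``fallback to Nori'' is not an argument.

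\textbf{The missing idea is the Frattini subgroup.} This is what Lubotzky's ``one for almost all'' trick actually rests on, not maximal subgroups, and it is what the paper uses.
\begin{itemize}
\item By Weigel, for $p$ outside a finite set $Q_0$ depending only on $\bG$, the kernel of $\bG(\Z_p)\to\bG(\mathbb{F}_p)$ is the Frattini subgroup of $\bG(\Z_p)$. Take $\mathcal{Q}$ to be the primes lying over $Q_0$.
\item For $\mathfrak{p}\in\mathcal{P}_1(A)\setminus\mathcal{Q}$, the completion of $A$ at $\mathfrak{p}$ is $\Z_p$; this is where residue degree one is used.
\item So if $X_{\mathfrak{p}}$ generates $\bG(\mathbb{F}_p)$, then $X$ topologically generates $\bG(\Z_p)$. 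This group is open in $\bG(\mathbb{Q}_p)$, hence Zariski dense.
\item Zariski-closed sets are $p$-adically closed, so $X$ Zariski generates $\bG$.
\end{itemize}
Uniformity of $\mathcal{Q}$ in $X$ then comes for free.
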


\begin{proof}
For a prime integer $p$, let $K_{p}$ be the $p^{\mathrm{th}}$ principal
congruence subgroup, that is the kernel of the map $\bG(\Z_{p})\to\bG(\mathbb{F}_{p})$.
Then $K_{p}$ is the Frattini subgroup of $\bG(\Z_{p})$ for all primes
$p$ outside a finite exceptional set of primes $Q_{0}$ \cite{weigel1995finite}
(see also \cite{weigel1996profinite} and \cite{lubotzky1999one}).
We let $\mathcal{Q}\subset\mathcal{P}(A)$ be the set of primes $\mathfrak{p}$
with $\mathrm{char}(A/\mathfrak{p})=[\Z:\mathfrak{p}\cap\Z]\in Q_{0}$.
Clearly $\mathcal{Q}$ is finite. 

Now let $X\in\bG(A)^{n}$. The implication $1\Rightarrow2$ follows
from the strong approximation theorem \cite{weisfeiler1984strong}
(see \cite{avni2025mixed} for a new elegant proof of this classical
theorem). The implication $2\Rightarrow3$ is trivial. 

It is left to show $3\Rightarrow1$. Fix $\mathfrak{p}\in\mathcal{P}_{1}(A)\backslash\mathcal{Q}$
such that $X_{\mathfrak{p}}$ generates $\bG_{\mathfrak{p}}(A/\mathfrak{p})$.
Consider the completion of $A$ with respect to the valuation corresponding
to $\mathfrak{p}$, and denote it $\bar{A}^{\fp}$. We have the following
diagram
\[
\xymatrix{\bG(A)\ar[r]\ar[d] & \bG(\bar{A}^{\fp})\ar[d]\\
\bG(A/\fp)\ar[r]^{\tilde{f}} & \bG(\bar{A}^{\fp}/\fp)
}
\]
Note that the bottom arrow is an isomorphism. Since $\fp\in\mathcal{P}_{1}(A)$,
we have that $\bar{A}^{\mathfrak{p}}\cong\Z_{p}$ and $\bar{A}^{\fp}/\fp=\mathbb{F}_{p}$,
where $p=|A/\mathfrak{p}|$. Since $\mathfrak{p}\notin\mathcal{Q}$,
the kernel of the vertical map on the right-hand side of the diagram
is the Frattini subgroup of $\bG(\bar{A}^{\mathfrak{p}})$. Thus,
when viewing $X$ as a tuple of elements in $\bG(\bar{A}^{\fp})$,
it generates it in the profinite topology, so that $X$ certainly
generates $\bG$ in the Zariski topology. 
\end{proof}

\begin{prop}
\label{prop:redundancy}For $X\in\bG(A)^{n}$, the following are equivalent:
\begin{enumerate}
\item $X$ is a (Nielsen) irredundant Zariski generating set of $\bG$.
\item $X_{\mathfrak{p}}$ is a (Nielsen) irredundant generating set of $\bG_{\mathfrak{p}}(A/\mathfrak{p})$
for almost every $\mathfrak{p}\in\mathcal{P}_{1}(A)$. 
\item $X_{\mathfrak{p}}$ is a (Nielsen) irredundant generating set of $\bG_{\mathfrak{p}}(A/\mathfrak{p})$
for infinitely many $\mathfrak{p}\in\mathcal{P}_{1}(A)$. 
\end{enumerate}
\end{prop}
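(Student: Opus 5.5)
We will deduce Proposition \ref{prop:redundancy} from Proposition \ref{prop:generation}, applied not only to $X$ but to every proper subtuple of $X$, and, in the Nielsen case, to every tuple $\sigma(X)$ and its proper subtuples.

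Irredundancy is a statement of the form ``$X$ generates, and every proper subtuple $Y\subsetneq X$ does not generate.'' The set of proper subtuples is finite, so we may apply Proposition \ref{prop:generation} to $X$ and to each of finitely many subtuples $Y$ simultaneously.

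\emph{$1\Rightarrow2$.} Since $X$ Zariski generates, Proposition \ref{prop:generation} (1$\Rightarrow$2) says $X_{\fp}$ generates $\bG_{\fp}(A/\fp)$ for almost every $\fp\in\mathcal{P}_{1}(A)$. Each proper $Y$ does not Zariski generate. By the contrapositive of 3$\Rightarrow$1, $Y_{\fp}$ then fails to generate for every $\fp\in\mathcal{P}_{1}(A)\setminus\mathcal{Q}$. Removing the finitely many exceptional primes gives 2.

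\emph{$2\Rightarrow3$.} This is trivial, since $\mathcal{P}_{1}(A)$ is infinite by Chebotarev.

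\emph{$3\Rightarrow1$.} Among the infinitely many good $\fp$, pick one outside $\mathcal{Q}$. Proposition \ref{prop:generation} (3$\Rightarrow$1) shows $X$ Zariski generates. If some proper $Y$ Zariski generated, then $Y_{\fp}$ would generate for almost every $\fp$. This contradicts the fact that $Y_{\fp}$ fails to generate for infinitely many $\fp$.

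The Nielsen version follows the same pattern, with one complication: $\mathrm{Aut}(F_{n})$ is infinite, so we cannot take a union of finitely many exceptional sets. We avoid this by observing that the exceptional set $\mathcal{Q}$ of Proposition \ref{prop:generation} is uniform: it depends only on $\bG$ and $A$, not on the tuple. We also use two facts: $\sigma(X)\in\bG(A)^{n}$ whenever $X$ is, and reduction commutes with $\sigma$, i.e.\ $\sigma(X)_{\fp}=\sigma(X_{\fp})$.

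\emph{$1\Rightarrow2$ (Nielsen).} Suppose $X$ is Nielsen irredundant. For every $\sigma$ and every proper subtuple $Y$ of $\sigma(X)$, $Y$ does not Zariski generate, so $Y_{\fp}$ does not generate for any $\fp\in\mathcal{P}_{1}(A)\setminus\mathcal{Q}$. Together with generation of $X_{\fp}$ for almost every $\fp$, this shows $X_{\fp}$ is Nielsen irredundant for almost every $\fp$. The exceptional set here is one fixed finite set, independent of $\sigma$.

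\emph{$3\Rightarrow1$ (Nielsen).} Suppose $X$ were Nielsen redundant, witnessed by some $\sigma$ and a proper subtuple $Y$ of $\sigma(X)$ that Zariski generates. Then $Y_{\fp}$, a proper subtuple of $\sigma(X_{\fp})$, generates for almost all $\fp$. So $X_{\fp}$ is Nielsen redundant for almost all $\fp$, contradicting 3. Generation of $X$ itself follows as in the non-Nielsen case.

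The only delicate point is this uniformity in $\sigma$ in the Nielsen direction $1\Rightarrow2$. It is resolved by the fact, built into Proposition \ref{prop:generation}, that the implication 3$\Rightarrow$1 holds at every single prime outside the tuple-independent set $\mathcal{Q}$.
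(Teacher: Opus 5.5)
Your proof is correct and follows essentially the same route as the paper: both directions reduce to Proposition \ref{prop:generation} applied to proper subtuples of $\sigma(X)$, using $\sigma(X)_{\fp}=\sigma(X_{\fp})$ and the fact that the exceptional set $\mathcal{Q}$ does not depend on the tuple. Your explicit remark on uniformity in $\sigma$ spells out what the paper invokes in one phrase as ``the defining property of $\mathcal{Q}$.''
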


\begin{proof}
For $X\in\bG(A)^{n}$ and $\sigma\in\mathrm{Aut}(F_{n})$, we have
$\sigma(X)_{\mathfrak{p}}=\sigma(X_{\mathfrak{p}})$, so we may consider
redundancy and Nielsen redundancy simultaneously. Let $Q$ be the
finite exceptional set given in Proposition \ref{prop:generation}.

For $1\Rightarrow2$, let $X$ be a (Nielsen) irredundant Zariski
generating set of $\bG$. By Proposition \ref{prop:generation} ,
$X_{\mathfrak{p}}$ generates $\mathbf{G}_{\mathfrak{p}}(A/\mathfrak{p})$
for almost every $\mathfrak{p}\in\mathcal{P}_{1}(A)$. Moreover, for
every $\mathfrak{p}\in\mathcal{P}_{1}(A)\backslash\mathcal{Q}$ such
that $X_{\mathfrak{p}}$ generates $\mathbf{G}_{\mathfrak{p}}(A/\mathfrak{p})$,
$X_{\mathfrak{p}}$ must be (Nielsen) irredundant: otherwise we would
get by the defining property of $\mathcal{Q}$ that $X$ were (Nielsen)
redundant, contrary to our assumptions. 

The implication $2\Rightarrow3$ is trivial. For $3\Rightarrow1$,
assume that, for infinitely many $\mathfrak{p}\in\mathcal{P}_{1}(A)$,
$X_{\mathfrak{p}}$ is a (Nielsen) irredundant generating set of $\bG_{\mathfrak{p}}(A/\mathfrak{p})$.
Then, for some $\mathfrak{p}\in\mathcal{P}_{1}(A)\backslash\mathcal{Q}$,
$X_{\mathfrak{p}}$ generates $\bG_{\mathfrak{p}}(A/\mathfrak{p})$,
which means $X$ Zariski generates $\mathbf{G}$. Assume by contradiction
that $X$ is (Nielsen) redundant. Then by Proposition \ref{prop:generation},
$X_{\mathfrak{p}}$ is (Nielsen) redundant for almost every $\mathfrak{p}\in\mathcal{P}_{1}(A)$,
contrary to the assumption $X_{\mathfrak{p}}$ is (Nielsen) irredundant
for infinitely many $\mathfrak{p}\in\mathcal{P}_{1}(A)$. 
\end{proof}
We thus obtain:
\begin{cor}
\label{thm:liminf and limsup}Let $X\in\bG(A)^{n}$ be Zariski generating
for some $n\in\mathbb{N}$. If $X$ is irredundant, then
\[
|X|\leqslant\liminf_{\fp\in\mathcal{P}_{1}(A)}\rr(\bG_{\mathfrak{p}}(A/\mathfrak{p}))\leqslant\limsup_{p\in\N,\text{ prime}}\rr(\bG_{p}(\mathbb{F}_{p})).
\]
If $X$ is Nielsen irredundant, then 
\[
|X|\leqslant\liminf_{\fp\in\mathcal{P}_{1}(A)}\nrr(\bG_{\mathfrak{p}}(A/\mathfrak{p}))\leqslant\limsup_{p\in\N,\text{ prime}}\nrr(\bG_{p}(\mathbb{F}_{p})).
\]
\end{cor}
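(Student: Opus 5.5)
The plan is to read the corollary off Proposition \ref{prop:redundancy}. Fix a Zariski generating $X\in\bG(A)^{n}$ that is (Nielsen) irredundant. By the implication $1\Rightarrow2$ of Proposition \ref{prop:redundancy}, there is a finite set $\mathcal{F}\subset\mathcal{P}_{1}(A)$ such that for every $\fp\in\mathcal{P}_{1}(A)\setminus\mathcal{F}$ the tuple $X_{\fp}$ is a (Nielsen) irredundant generating tuple of $\bG_{\fp}(A/\fp)$. By definition of $\rr$ and $\nrr$, this gives $|X|=n\le\rr(\bG_{\fp}(A/\fp))$, and respectively $n\le\nrr(\bG_{\fp}(A/\fp))$, for all such $\fp$.

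There is one subtlety: $X_{\fp}$ is a tuple, and its entries may coincide. For Nielsen irredundance this is harmless, because $\nrr$ is defined via tuples. For plain irredundance, if two entries of $X_{\fp}$ coincided, then deleting one would leave a generating proper subtuple, contradicting irredundance. Likewise, if $X_{\fp}$ contained the identity, deleting it would again contradict irredundance. So the entries of $X_{\fp}$ are pairwise distinct, and $X_{\fp}$ is a genuine irredundant generating set of size $n$. The same argument shows the entries of $X$ itself are distinct.

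Since $\mathcal{P}_{1}(A)$ is infinite (Chebotarev), the inequality holding off a finite set gives $n\le\liminf_{\fp\in\mathcal{P}_{1}(A)}\rr(\bG_{\fp}(A/\fp))$, and the same for $\nrr$.

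For the second inequality, recall that for $\fp\in\mathcal{P}_{1}(A)$ we have $A/\fp\cong\mathbb{F}_{p}$ with $p=|A/\fp|$, and $\bG_{\fp}=\bG_{p}$ is the reduction of the fixed $\Z$-scheme. Hence $\rr(\bG_{\fp}(A/\fp))=\rr(\bG_{p}(\mathbb{F}_{p}))$. Each rational prime $p$ has at most $[F:\Q]$ primes of $A$ above it, so as $\fp$ ranges over cofinite subsets of $\mathcal{P}_{1}(A)$, the primes $p=|A/\fp|$ tend to infinity. Therefore the liminf over $\fp$ is bounded by the limsup over primes $p$, using only that liminf over a subsequence is at most limsup over the full sequence. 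The same reasoning applies to $\nrr$.

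I expect no real obstacle here. The only points needing care are the tuple-versus-set distinction handled above and the fact that $p\to\infty$ along $\mathcal{P}_{1}(A)$, which follows from the finiteness of the fibres of $\fp\mapsto p$.
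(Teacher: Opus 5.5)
Your proposal is correct and follows the paper's proof: the first inequality is read off from the implication $1\Rightarrow2$ of Proposition \ref{prop:redundancy}, and the second comes from identifying $\bG_{\mathfrak{p}}(A/\mathfrak{p})$ with $\bG_{p}(\mathbb{F}_{p})$ for degree-one primes. The only difference is minor: the paper restricts to the (infinitely many) rational primes totally split in $\mathcal{O}_{F}$, whereas you use directly that $\mathfrak{p}\mapsto|A/\mathfrak{p}|$ has finite fibres on $\mathcal{P}_{1}(A)$, and your remarks on tuples versus sets spell out a detail the paper leaves implicit.
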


\begin{proof}
The inequality $|X|\leqslant\liminf_{\fp\in\mathcal{P}_{1}(A)}\rr(\bG_{\mathfrak{p}}(A/\mathfrak{p}))$
(respectively $|X|\leqslant\liminf_{\fp\in\mathcal{P}_{1}(A)}\nrr(\bG_{\mathfrak{p}}(A/\mathfrak{p}))$)
simply means that $|X|\leq\rr(\bG_{\mathfrak{p}}(A/\mathfrak{p}))$
(respectively $|X|\leq\nrr(\bG_{\mathfrak{p}}(A/\mathfrak{p}))$)
for almost all $\mathfrak{p}\in\mathcal{P}_{1}(A)$, which follows
from Proposition \ref{prop:redundancy}. The inequalities $\liminf_{\fp\in\mathcal{P}_{1}(A)}\rr(\bG_{\mathfrak{p}}(A/\mathfrak{p}))\leqslant\limsup_{p\in\N,\text{ prime}}\rr(\bG_{p}(\mathbb{F}_{p}))$
and $\liminf_{\fp\in\mathcal{P}_{1}(A)}\nrr(\bG_{\mathfrak{p}}(A/\mathfrak{p}))\leqslant\limsup_{p\in\N,\text{ prime}}\nrr(\bG_{p}(\mathbb{F}_{p}))$
are obtained by restricting to (the infinitely many) primes $p$ which
are totally split over $\mathcal{O}_{F}$. For such primes $p$ we
have $\bG_{p}(\mathbb{F}_{p})=\bG_{\mathfrak{p}}(A/\mathfrak{p})$
where $\mathfrak{p}\in\mathcal{P}_{1}(A)$ can be any of the prime
ideals appearing in the prime decomposition of $p$ in $\mathcal{O}_{F}$. 
\end{proof}

\subsubsection{The Non-Arithmetic Case\label{subsec:The-Non-Arithmetic-Case}}

We now turn to consider tuples $X$ that do not necessarily lie inside
an arithmetic subgroup. This is done via the notion of specialisation.
\begin{lem}
\label{lem:LL} Let $\bG$ be a connected, simply-connected, absolutely
almost simple, $\Z$-group scheme, and let $X\in\bG(\C)^{n}$ be a
(Nielsen) irredundant Zariski generating set for $\bG$ for some $n\in\mathbb{N}$.
Then there exists a number field $F$, a finite set of places $S$
of $F$, a ring $R\subseteq\mathbb{C}$ satisfying $X\subseteq\mathbf{G}(R)$,
and a map $\varphi:\mathbf{G}(R)\to\bG(\mathcal{O}_{F}S^{-1})$ such
that the image of $X$ under $\varphi$ (applied entrywise) is still
(Nielsen) irredundant Zariski generating.
\end{lem}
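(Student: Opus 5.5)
We take $R$ to be the finitely generated $\Z$-subalgebra of $\C$ generated by the matrix entries of the elements of $X$ (in a fixed faithful $\Z$-embedding $\bG\subset\mathrm{GL}_{N}$), together with the inverses of the determinants. Then $X\subseteq\bG(R)$. The plan is to find a ring homomorphism $\varphi:R\to\mathcal{O}_{F}S^{-1}$, with $F$ a number field, such that every Zariski generation property and every non-generation property that has to be preserved is encoded by finitely many polynomial conditions on the entries. A specialisation that respects all of them then preserves irredundance.

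\textbf{Step 1: encoding.} Zariski generation of $\bG$ by a tuple $Y\in\bG(K)^{k}$, over an algebraically closed field, is a constructible condition on $Y$. One way to see this: $Y$ generates a Zariski dense subgroup iff the words of length $\le N_{0}$ in $Y$ do not all lie in a proper algebraic subgroup of bounded complexity. Here we use that every proper closed subgroup of the absolutely almost simple $\bG$ is contained in a maximal one from finitely many families of bounded degree, together with a finite-generation/Noetherianity argument. Equivalently, the set $\mathcal{D}_{k}\subset\bG^{k}$ of Zariski generating tuples is constructible and defined over $\Q$. We apply this as follows.
\begin{itemize}
\item $X\in\mathcal{D}_{n}$.
\item In the irredundant case, for each of the $n$ subtuples $X^{(i)}$ obtained by deleting one entry, $X^{(i)}\notin\mathcal{D}_{n-1}$.
\item In the Nielsen case, we need $\sigma(X)^{(i)}\notin\mathcal{D}_{n-1}$ for \emph{all} $\sigma\in\mathrm{Aut}(F_{n})$, which is infinitely many conditions.
\end{itemize}

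\textbf{Step 2: specialisation.} $R$ is finitely generated, so $\mathrm{Spec}(R)\otimes\Q$ is an irreducible variety $V$ over $\Q$, and $X$ corresponds to the generic point of a morphism $V\to\bG^{n}$. Its $\bar{\Q}$-points are Zariski dense. The condition $X\in\mathcal{D}_{n}$ holds at the generic point, hence on a dense open subset of $V$. Each non-generation condition $X^{(i)}\notin\mathcal{D}_{n-1}$ holds at the generic point, hence it holds identically on $V$: the map into $\bG^{n-1}$ lands in a closed complement that is defined by the generic fibre, and specialisation only makes generated subgroups smaller. So any $\bar{\Q}$-point of the open set gives a homomorphism $\varphi:R\to F$ for some number field $F$, which we may take Galois. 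The image of $R$ is finitely generated, so it lies in $\mathcal{O}_{F}S^{-1}$ for a suitable finite set $S$.

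\textbf{Step 3: the Nielsen case (main obstacle).} Here infinitely many non-generation conditions must survive. The key point is monotonicity under specialisation: if $\sigma(X)^{(i)}$ fails to be Zariski generating, then the Zariski closure of the subgroup it generates is a proper subgroup $H_{\sigma,i}$. Its specialisation, meaning the subgroup generated by $\varphi(\sigma(X)^{(i)})$, lies in the specialisation of $\overline{H_{\sigma,i}}$. The latter is a closed subscheme of a flat family over an open subset of $V$, of dimension $<\dim\bG$ generically. We must ensure that it remains proper at the chosen point, i.e.\ that the point is not one where the family jumps. Since there are countably many pairs $(\sigma,i)$, we avoid countably many proper closed subsets of $V$. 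A point of $V(\bar{\Q})$ escaping all of them exists, for instance by a Hilbert-irreducibility or Baire-category argument on $V(\bar{\Q}_{\ell})$ restricted to points over number fields of bounded degree, but this point needs care. An alternative that sidesteps the issue: use instead a specialisation $\varphi$ with $\varphi(x)$ close to $x$ in a $\bar{\Q}$-embedding, which keeps the relevant non-density. I expect Step 3 to be where the real work lies.
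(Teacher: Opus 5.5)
Your Step 3 is a genuine gap, and it comes from missing that no avoidance argument is needed. For every $\sigma\in\mathrm{Aut}(F_n)$ the tuple $\sigma(X)$ again lies in $\bG(R)^n$. Since $\varphi$ is a group homomorphism applied entrywise, $\varphi(\sigma(X))=\sigma(\varphi(X))$. So it suffices to have a monotonicity principle: for the chosen ring homomorphism $\phi:R\to F\subset\C$ (in fact for any such $\phi$), if a finite $Y\subset\bG(R)$ does not Zariski generate, then neither does $\varphi(Y)$. Applied to the deleted subtuples of all $\sigma(X)$ at once, this makes the same $\varphi$ preserve Nielsen irredundance. This is exactly how the paper reduces the Nielsen case to the plain one.

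Your Step 2 already asserts this principle (``holds identically on $V$''), so the ``jump'' points you fear in Step 3 contradict your own Step 2. The workarounds you suggest would also fail. If $\dim V\geq1$, then $V(\bar{\Q})$ is countable and is a countable union of proper closed subsets (the Galois orbits of its points). So no Baire or Hilbert-irreducibility argument can produce an algebraic point avoiding an arbitrary countable family. And choosing $\varphi(X)$ analytically close to $X$ cannot preserve non-density: non-density is a closed condition, and Zariski-dense tuples accumulate on non-dense ones.

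The monotonicity principle is the real content of the lemma, and you do not prove it. The naive argument fails: taking a nonzero polynomial $p$ over $R$ vanishing on $\langle Y\rangle$ and specialising its coefficients may give the zero polynomial. Your Step 1 only claims constructibility of the generating locus. That shows non-generation persists on a dense open subset of $V$, which handles the finitely many conditions of plain irredundance but not infinitely many pairs $(\sigma,i)$. Closedness of the non-generating locus over $\Q$ would suffice, but it is a harder statement that you have not established.

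The paper's fix is short. Extend $\phi$ to $\psi:B\to\C$ on a valuation ring $R\subseteq B\subseteq\mathrm{frac}(R)$. Divide $p$ by a coefficient of minimal valuation, so that all coefficients lie in $B$ and one of them equals $1$. The $\psi$-specialisation is then a nonzero polynomial vanishing on $\varphi(\langle Y\rangle)$.

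Finally, for the existence of a number-field specialisation preserving Zariski density of $X$, the paper cites Larsen--Lubotzky rather than proving constructibility. Your Step 1 sketch (bounded families of maximal subgroups plus Noetherianity) is a plausible but unproved substitute for that citation.
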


\begin{proof}
Fix a faithful representation $\mathbf{G}(\mathbb{C})\hookrightarrow\mathrm{GL}_{d}(\mathbb{C})$
defined over $\mathbb{Q}$. Let $\Gamma$ be the subgroup of $\bG(\C)$
generated abstractly by $X$, and let $R$ denote the ring generated
by the entries of the elements of $X$. Then $R$ is a finitely generated
subring of $\C$ and $\Gamma\subset\bG(R)$. By \cites[Theorem 4.1]{LarLub},
there exists a number field $F$ and a ring homomorphism $\phi:R\to F$
such that the Zariski closure of $\varphi(\Gamma)$ is isomorphic
over $\C$ to $\GC$, where we denote by $\varphi$ the homomorphism
obtained by applying $\phi$ to the entries of the matrix. (Observe
that we are using different notations: what they denote by $A$ we
denote by $R$, what they denote by $K$ we simply write as $\mathrm{frac}(R)$,
and what they denote by $k$ we denote by $F$. Moreover, observe
that they allow `simple' groups to have finite centre; see the \emph{Notations
and conventions} at the end of page 5.) Since the only $\C$-subgroup
of $\bG$ that is isomorphic to itself is $\bG$, we get that $\varphi(\Gamma)$
is Zariski dense in $\bG(\C)$$.$ In other words, $\varphi(X)$ (where
$\varphi$ is applied entrywise) Zariski generates $\bG$, and since
this set is finite and contained in $\bG(F)$, it is moreover contained
in $\bG(\mathcal{O}_{F}S^{-1})$ for some finite set of places $S$.

It is left to show $\varphi(X)$ is (Nielsen) irredundant. Since $\varphi$
commutes with the action of $\mathrm{Aut}(F_{\left|X\right|})$, it
is enough to consider regular redundancy. Thus, consider $X$ as a
subset of $\mathbf{G}(\mathbb{C})$ and assume by contradiction there
is a proper subset $Y\subsetneq X$ such that $\varphi(Y)$ generates
a Zariski dense subgroup. 

Since $X$ is irredundant, it follows $\left\langle Y\right\rangle $
is \emph{not }Zariski dense in $\mathbf{G}(\mathbb{C})$. Since $\bG(\mathrm{frac}(R))$
is Zariski dense in $\bG(\C)$, it follows that $\left\langle Y\right\rangle $
is not Zariski dense in $\bG(\mathrm{frac}(R))$. Thus, there exists
a polynomial $p\ne0$ in the entries of $\G$ with coefficients in
the fraction field $\mathrm{frac}(R)$ such that $p(x)=0$ for every
$x\in\left\langle Y\right\rangle $. By multiplying by an element
of $R$, we may assume that the coefficients of $p$ are in $R$.

By \cites[Theorem 5.1]{AtiMac}, it is possible to extend $\phi$
to a homomorphism $\psi:B\to\mathbb{C}$ for some valuation ring $B\subseteq\mathrm{frac}(R)$.
(Recall that $B$ is a valuation ring of $\mathrm{frac}(R)$ if, for
every $x\neq0$, we have $x\in B$ or $x^{-1}\in B$.) For every two
coefficients $a_{i},a_{j}$ in $p$, either $a_{i}/a_{j}$ or $a_{j}/a_{i}$
(or both) are in $B$. Therefore, one may show inductively that there
is $i_{0}$ such that $a_{i_{0}}$ divides in $B$ all the coefficients
of $p$. Consider $\tilde{p}=\frac{1}{a_{i_{0}}}p$. This is a polynomial
over $B$, so we can apply $\psi$ to its coefficients. Denote by
$\tilde{p}^{\psi}$ the polynomial $\tilde{p}$ after we apply $\psi$
to its coefficients. Observe $\tilde{p}^{\psi}$ is not the zero polynomial,
since the $i_{0}^{\mathrm{th}}$ coefficient of $\tilde{p}$ (and
hence also of $\tilde{p}^{\psi}$) is $1$. However, for every $x\in\left\langle Y\right\rangle $,
\[
\tilde{p}^{\psi}(\varphi(x))=\psi(\tilde{p}(x))=\psi(0)=0.
\]
We get that $\tilde{p}^{\psi}$ is zero on $\left\langle \varphi(Y)\right\rangle =\varphi(\left\langle Y\right\rangle )$.
Since the latter is Zariski dense, we get that $\tilde{p}^{\psi}$
is the zero polynomial. Contradiction!
\end{proof}
We therefore get
\begin{thm}
\label{thm:simply-connected-case}Let $\bG$ be a connected, simply-connected,
absolutely almost simple group scheme over $\Z$. Then
\[
\zr(\bG)\leq\limsup_{p\in\mathbb{N}\text{ prime}}\rr(\bG_{p}(\mathbb{F}_{p}))
\]
and
\[
\nrr^{\mathbb{C}}(\bG)\leq\limsup_{p\in\mathbb{N}\text{ prime}}\nrr(\bG_{p}(\mathbb{F}_{p})).
\]
\end{thm}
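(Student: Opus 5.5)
The plan is to combine Lemma \ref{lem:LL} with Corollary \ref{thm:liminf and limsup}. Recall that $\zr(\bG)$ is the supremum of $|X|$ over finite irredundant Zariski generating tuples $X\in\bG(\C)^{n}$, and $\nrr^{\C}(\bG)$ is the analogous supremum over Nielsen irredundant ones. It therefore suffices to fix such a tuple $X$ and show that $|X|$ is bounded by the relevant $\limsup$; taking the supremum over $X$ then gives both inequalities.

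Let $X\in\bG(\C)^{n}$ be a (Nielsen) irredundant Zariski generating tuple. By Lemma \ref{lem:LL} there are:
\begin{itemize}
\item a number field $F$ and a finite set of places $S$;
\item a finitely generated ring $R\subseteq\C$ with $X\subseteq\bG(R)$;
\item a map $\varphi:\bG(R)\to\bG(\mathcal{O}_{F}S^{-1})$ such that $\varphi(X)$, with $\varphi$ applied entrywise, is again (Nielsen) irredundant and Zariski generating.
\end{itemize}
Moreover $|\varphi(X)|=|X|=n$, since we work with tuples and $\varphi$ is applied coordinatewise. Repeated entries cause no trouble: they would already make $X$ redundant, and irredundance is preserved under $\varphi$.

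Corollary \ref{thm:liminf and limsup} was stated for a fixed Galois extension $F/\Q$. So first enlarge $F$ to its Galois closure $F'$, and let $S'$ be the places of $F'$ lying above $S$. Then $\mathcal{O}_{F}S^{-1}\subseteq A:=\mathcal{O}_{F'}S'^{-1}$, and $\varphi(X)\in\bG(A)^{n}$. Zariski generation and (Nielsen) irredundance of $\varphi(X)$ do not depend on this ring, because they concern the Zariski closure of the generated subgroup in $\bG(\C)$.

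Now apply Corollary \ref{thm:liminf and limsup} to $\varphi(X)\in\bG(A)^{n}$. It gives
\[
n\leq\limsup_{p}\rr(\bG_{p}(\mathbb{F}_{p}))
\]
in the irredundant case, and
\[
n\leq\limsup_{p}\nrr(\bG_{p}(\mathbb{F}_{p}))
\]
in the Nielsen irredundant case. Taking the supremum over $X$ yields both inequalities of the theorem. These hold trivially if the right-hand side is infinite.

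The only point needing care is the passage to the Galois closure, together with checking that the hypotheses of Section \ref{subsec:Simple-Simply-Connected-Algebrai} are met. Those hypotheses are: $\bG$ is a simply-connected, absolutely almost simple $\Z$-scheme, which is exactly our assumption, and $A$ is a localisation of the ring of integers of a Galois number field. The rest is bookkeeping.
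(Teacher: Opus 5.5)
Your proposal is correct and is essentially the paper's own proof. You specialise $X$ via Lemma~\ref{lem:LL} to a (Nielsen) irredundant Zariski generating tuple in $\bG(\mathcal{O}_{F}S^{-1})$, enlarge $F$ to be Galois over $\Q$, apply Corollary~\ref{thm:liminf and limsup}, and take the supremum over $X$; your remarks on the Galois closure and on the ring-independence of generation and irredundance spell out what the paper does in one line.
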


\begin{proof}
Let $X$ be a (Nielsen) irredundant Zariski generating set for $\bG(\C)$.
By Lemma \ref{lem:LL}, there exists a number field $F$, a finite
set of places $S$ of $F$, a ring $R\subseteq\mathbb{C}$ satisfying
$X\subseteq\mathbf{G}(R)$, and a homomorphism $\varphi:\mathbf{G}(R)\to\bG(\mathcal{O}_{F}S^{-1})$
such that $\varphi(X)$ is still Zariski generating and (Nielsen)
irredundant in $\bG$ (where we apply $\varphi$ entrywise). Up to
replacing $F$ with a larger field extension, we may assume that $F/\Q$
is Galois. We apply Corollary \ref{thm:liminf and limsup} and get
\[
|X|=|\varphi(X)|\leq\limsup_{p\in\mathbb{N}\text{ prime}}\rr(\bG_{p}(\mathbb{F}_{p}))
\]
in the case $X$ is irredundant, and 
\[
|X|=|\varphi(X)|\leq\limsup_{p\in\mathbb{N}\text{ prime}}\nrr(\bG_{p}(\mathbb{F}_{p}))
\]
in the case $X$ is Nielsen irredundant. Since $X$ was arbitrary,
we are done.
\end{proof}
Using the state-of-the-art estimates on the redundancy rank of finite
groups we deduce the following consequence.
\begin{cor}
\label{cor:simply-connected-ab-al-simple-case}Let $\bG$ be a connected,
simply-connected, absolutely almost simple group scheme over $\Z$.
Then
\[
\nrr^{\mathbb{C}}(\mathbf{G})\leq\zr(\bG)\leq10^{5}\mathrm{rank}(\bG)^{10}.
\]
\end{cor}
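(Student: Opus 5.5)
The corollary combines Theorem \ref{thm:simply-connected-case} with Harper's bound from \cite{harper2023maximal}, plus the elementary inequality between the two invariants. I would argue in three steps.

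\textbf{Step 1: the first inequality.} The inequality $\nrr^{\mathbb{C}}(\bG)\leq\zr(\bG)$ is formal. A Nielsen irredundant generating tuple $X$ is in particular irredundant, since we may take $\sigma=\mathrm{id}$ in the definition. Hence every tuple counted in the supremum defining $\nrr^{\C}$ is also counted in the supremum defining $\zr$. The same reasoning gives $\mathrm{d}\le\nrr\le\rr$ in the introduction, now with the Zariski topology.

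\textbf{Step 2: reduction to finite groups.} Theorem \ref{thm:simply-connected-case} gives
\[
\zr(\bG)\leq\limsup_{p}\rr(\bG_{p}(\mathbb{F}_{p})).
\]
So it suffices to show that $\rr(\bG_{p}(\mathbb{F}_{p}))\leq10^{5}\mathrm{rank}(\bG)^{10}$ for all but finitely many primes $p$.

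\textbf{Step 3: applying Harper's bound.} Harper's theorem bounds the maximal size of an irredundant generating set of a finite simple group of Lie type of rank $r$ by $10^{5}r^{10}$. This step is the main obstacle, because our groups are the simply connected groups $\bG_{p}(\mathbb{F}_{p})$, which are quasisimple rather than simple.

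\emph{Passing to the simple quotient.} For $p$ large, $\bG_p(\mathbb{F}_p)$ is perfect, and its quotient by the centre $Z$ is the finite simple group of Lie type of the same type. Its Lie rank equals $\mathrm{rank}(\bG)$, since $\bG$ is split.

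I would check that $Z$ lies in the Frattini subgroup. This holds because $\bG_p(\mathbb{F}_p)$ is perfect: if a maximal subgroup $M$ did not contain the central subgroup $Z$, then $MZ=G$. Then $M$ would be normal with abelian quotient, contradicting perfectness.

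Consequently a subset generates $\bG_p(\mathbb{F}_p)$ if and only if its image generates the simple quotient. So irredundant generating sets map bijectively onto irredundant generating sets of the quotient, with no collisions: if two elements had the same image, dropping one of them would still generate the quotient and hence the whole group, contradicting irredundance. Thus $\rr(\bG_p(\mathbb{F}_p))=\rr(\bG_p(\mathbb{F}_p)/Z)\le10^5\mathrm{rank}(\bG)^{10}$.

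\emph{Caveat.} If Harper's statement is already phrased for the groups $\bG_p(\mathbb{F}_p)$ themselves, as the introduction suggests, this step is immediate. Finitely many small $p$ where perfectness fails are irrelevant to the $\limsup$.
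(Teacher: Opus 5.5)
Your proposal is correct and is essentially the paper's proof: the trivial inequality $\nrr^{\mathbb{C}}(\bG)\le\zr(\bG)$, then Theorem \ref{thm:simply-connected-case}, then Harper's bound for $\rr(\bG_p(\mathbb{F}_p))$. The one addition is your reduction from the quasisimple group $\bG_p(\mathbb{F}_p)$ to its simple quotient, using that the centre lies in the Frattini subgroup. The paper skips this step here by citing Harper directly for $\bG_p(\mathbb{F}_p)$; your argument for it is sound and is the same fact the paper invokes in Section \ref{sec:Examples} for $\mathrm{SL}_2$.
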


\begin{proof}
By \cite[Theorem 2]{harper2023maximal}, $\rr(\bG_{p}(\mathbb{F}_{p}))\leq10^{5}\rank(\bG)^{10}$
for every prime $p$. The result therefore follows from Theorem \ref{thm:simply-connected-case}.
\end{proof}

\subsection{Isogenies\label{subsec:Isogenies}}

\textcolor{magenta}{}From now on, by an algebraic group we will always
mean an affine algebraic group defined over a field of characteristic
zero.

Let $\bG$ be a connected semisimple algebraic group. Recall there
exists a connected, semisimple, simply-connected algebraic group $\tilde{\bG}$,
a connected adjoint algebraic group $\bar{\bG}$, and isogenies $\tilde{\bG}\to\bG\to\bar{\bG}$.
We will now show that $\zr(\mathbf{G})=\zr(\bar{\mathbf{G}})=\zr(\tilde{\mathbf{G}})$
and $\nrr^{\mathbb{C}}(\mathbf{G})=\nrr^{\mathbb{C}}(\bar{\mathbf{G}})=\nrr^{\mathbb{C}}(\tilde{\mathbf{G}})$.
In particular, 
\[
\zr(\bG)\leq10^{5}\mathrm{rank}(\bG)^{10}.
\]

\begin{lem}
Let $G=\mathbf{G}(\mathbb{C})$ be a perfect algebraic group, and
let $H\leqslant G$ be some abstract, Zariski dense subgroup. Then
$[H,H]$ is Zariski dense.
\end{lem}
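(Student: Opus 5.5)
The plan is to pass to Zariski closures and use the fact that Zariski closure is compatible with the group operations. Let $\bar{H}$ denote the Zariski closure of $H$ in $G$; by assumption $\bar{H}=G$. Let $L$ be the Zariski closure of $[H,H]$. This is a Zariski closed subgroup of $G$, and we want to show $L=G$. Since $G$ is perfect, $G=[G,G]$, where $[G,G]$ is the abstract commutator subgroup; for a connected algebraic group this subgroup is automatically closed. So it suffices to show that $[G,G]\subseteq L$, i.e.\ that every commutator $[g_{1},g_{2}]$ with $g_{1},g_{2}\in G$ lies in $L$.

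The key step is a standard two-stage density argument using that multiplication and inversion are morphisms, hence Zariski continuous in each variable. Fix $h\in H$ and consider the morphism $c_{h}:G\to G$, $g\mapsto[g,h]$. Its preimage $c_{h}^{-1}(L)$ is Zariski closed and contains $H$, since $[H,H]\subseteq L$. Hence it contains $\bar{H}=G$, so $[g,h]\in L$ for all $g\in G$ and $h\in H$. Next fix $g\in G$ and consider $h\mapsto[g,h]$. Its preimage of $L$ is closed and contains $H$, so again it is all of $G$. Therefore $[G,G]\subseteq L$, and hence $G=[G,G]\subseteq L$, giving $L=G$.

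A cleaner way to phrase this is the general fact that the closure of $[A,B]$ equals the closure of $[\bar{A},\bar{B}]$. Along the way one should check that $L$ is a subgroup, which holds because the closure of an abstract subgroup is a subgroup by the same continuity argument. One should also check that "perfect" is meant abstractly (or as closed commutator subgroup); either way $[G,G]\subseteq L$ suffices, since $L$ is closed.

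The main point needing care is that the Zariski topology on $G\times G$ is not the product topology. This is why I separate the variables, fixing one argument at a time rather than taking the closure of $H\times H$ directly. With that, there is no real obstacle, and the argument is routine.
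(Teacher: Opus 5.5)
Your proof is correct, and it organises the argument a little differently from the paper. The paper works with open sets. A nonempty Zariski open $V\subseteq G$ contains some product of $n$ commutators, so the word map $\varphi(g_1,\dots,g_{2n})=[g_1,g_2]\cdots[g_{2n-1},g_{2n}]$ has $\varphi^{-1}(V)$ nonempty and open in $G^{2n}$. Density of $H^{2n}$ in $G^{2n}$ then produces a point of $[H,H]$ in $V$. You instead work with the closed subgroup $L=\overline{[H,H]}$. Fixing one variable at a time, you show that every single commutator $[g_1,g_2]$ lies in $L$, and then use that $L$ is a subgroup to absorb products of commutators.

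Your route never needs the fact that $H^{2n}$ is Zariski dense in $G^{2n}$. The paper uses that fact without comment, and it is exactly where the Zariski topology on a product differs from the product topology; its proof is the same one-variable-at-a-time argument you give. The paper's route is shorter, handling all $n$ commutators at once through a single word map.

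Your aside that $[G,G]$ is closed for connected $G$ is unnecessary, and $G$ is not assumed connected here. As you note yourself, the argument only needs $[G,G]\subseteq L$ with $L$ closed, so it works under either reading of ``perfect''.
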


\begin{proof}
Let $V\subseteq G$ be a nonempty Zariski open subset. Then it contains
an element of $G=[G,G]$, say $[x_{1}x_{2}]\cdots[x_{2n-1}x_{2n}]$.
Therefore, the map
\begin{align*}
\varphi:G^{2n} & \to G\\
\varphi(g_{1},\dots,g_{2n}) & =[g_{1}g_{2}]\cdots[g_{2n-1}g_{2n}]
\end{align*}
`reaches' $V$; i.e., $\varphi^{-1}(V)$ is nonempty. Since $H^{2n}$
is Zariski dense in $G^{2n}$, we get that $H^{2n}\cap\varphi^{-1}(V)$
is nonempty. In other words, there are $h_{1},\dots,h_{2n}$ such
that $[h_{1}h_{2}]\cdots[h_{2n-1}h_{2n}]\in V$, as needed.
\end{proof}
Since the kernel of an isogeny is central, we get:
\begin{cor}
\label{cor:isogeny}Let $G=\mathbf{G}(\mathbb{C})$ be a perfect algebraic
group, $f:G\to H$ an isogeny (onto some other algebraic group $H=\mathbf{H}(\mathbb{C})$).
If $g_{1},\dots,g_{n}\in G$ are such that $f(g_{1}),\dots,f(g_{n})$
generate a Zariski dense subgroup of $H$, then $g_{1},\dots,g_{n}$
generate a Zariski dense subgroup of $G$. In particular, $\zr(\mathbf{G})=\zr(\mathbf{H})$
and $\nrr^{\mathbb{C}}(\mathbf{G})=\nrr^{\mathbb{C}}(\mathbf{H})$.
\end{cor}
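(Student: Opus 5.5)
Let $K=\ker f$, which is a finite central subgroup of $G$ because $f$ is an isogeny, and set $\Gamma=\langle g_{1},\dots,g_{n}\rangle$. I will show that $\Gamma$ is Zariski dense by comparing its Zariski closure with the full preimage of $f(\Gamma)$ and then using the preceding Lemma to remove the central ambiguity.

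First, $f(\Gamma)$ is Zariski dense in $H$. Consider the abstract subgroup $\Gamma K$ of $G$. Since $K$ is finite, the closure satisfies $\overline{\Gamma K}=\overline{\Gamma}K$, and this is a closed subgroup. It is the preimage of $f(\overline{\Gamma})$. The set $f(\overline{\Gamma})$ is closed, because the image of a closed subgroup under a morphism of algebraic groups is closed, and it contains $f(\Gamma)$, so it equals $H$. Hence $\overline{\Gamma}K=G$, and $\overline{\Gamma}$ is a finite index closed subgroup of $G$. Since $G$ is connected, $\overline{\Gamma}=G$, and we are done. Connectedness holds here because perfect algebraic groups in this section are connected semisimple groups, but I will not rely on it: the argument below avoids connectedness and is where the preceding Lemma enters.

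Set $H'=\Gamma K$. This is an abstract subgroup, and it is Zariski dense in $G$ because its closure is the full preimage of $H$. Since $K$ is central, $[H',H']=[\Gamma,\Gamma]\subseteq\Gamma$. By the preceding Lemma, applied to the perfect group $G$ and the dense subgroup $H'$, the group $[H',H']$ is Zariski dense. Therefore $\Gamma$ is Zariski dense. This argument is clean and is the intended route. The only point needing care is the density of $\Gamma K$, which uses the closedness of images of algebraic subgroups.

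For the equalities $\zr(\bG)=\zr(\mathbf{H})$ and $\nrr^{\mathbb{C}}(\bG)=\nrr^{\mathbb{C}}(\mathbf{H})$, apply the first statement to every subtuple $Y\subseteq X$ and to every $\sigma(X)$ with $\sigma\in\mathrm{Aut}(F_{n})$. This uses that $f$ commutes with the $\mathrm{Aut}(F_{n})$-action and that $f$ is surjective, so it maps dense subgroups to dense subgroups. Together these give that $Y$ generates densely if and only if $f(Y)$ does. Hence $X$ is a (Nielsen) irredundant Zariski generating tuple in $G$ if and only if $f(X)$ is one in $H$. Every tuple in $H$ lifts to $G$, since $f$ is surjective on $\mathbb{C}$-points. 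So the two suprema coincide. The main obstacle is checking that $\overline{\Gamma K}$ really is all of $G$, which follows from the closed-image property.
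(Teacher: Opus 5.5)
Your proof is correct and is essentially the paper's argument. The paper likewise derives the corollary from the preceding lemma by noting that $\ker f$ is central, so $[\Gamma K,\Gamma K]=[\Gamma,\Gamma]\subseteq\Gamma$ for the Zariski dense subgroup $\Gamma K=f^{-1}(f(\Gamma))$; your closed-image argument for the density of $\Gamma K$ just spells out a step the paper leaves implicit, and your connectedness shortcut is also valid, since for connected $G$ finiteness of $\ker f$ alone forces $\overline{\Gamma}=G$.
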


\subsection{Compact Groups\label{subsec:Compact-Groups}}

If $\bG$ is a $K$-algebraic group, we denote $\zrr^{K}(\bG)=\sup\left\{ |X|:X\subset\bG(K)\text{ is finite, Zariski generating and irredundant}\right\} $.
It is easy to see that $\zrr^{K}(\mathbf{G})\leqslant\zrr^{\mathbb{C}}(\mathbf{G})$
for every field $K$ of characteristic zero.
\begin{proof}[Proof of Theorem \ref{thm:main-algebraic}]
The inequalities on algebraic groups follow from Theorem \ref{thm:simply-connected-case}
and Corollary \ref{cor:isogeny}. It is well known that every compact
Lie group $G$ admits a unique structure of an algebraic group, $G=\mathbf{G}(\mathbb{R})$,
and that closed subgroups of $G$ are also Zariski closed (see, e.g.,
\cites(Ch. 5, \S 2.5\textdegree., Theorem 12 and Problem 24){OniVin}). This means
that subgroups of $G$ are dense if and only if they are Zariski dense.
Thus, for a connected compact simple Lie group $G$, we obtain
\begin{align*}
m(G) & =\zrr^{\mathbb{R}}(\mathbf{G})\leq\zr(\bG)=\zr(\tilde{\bG})\leq\limsup_{p\;\mathrm{prime}}\rr(\tilde{\bG}_{p}(\mathbb{F}_{p})),\\
\mu(G) & =\nrr^{\mathbb{R}}(\mathbf{G})\leq\mu^{\C}(\bG)=\mu^{\C}(\tilde{\bG})\leq\limsup_{p\;\mathrm{prime}}\mu(\tilde{\bG}_{p}(\mathbb{F}_{p})).\qedhere
\end{align*}
\end{proof}

\section{Reductive Algebraic Groups\label{sec:Reductive-and-Amenable}}

\subsection{Semisimple Algebraic Groups \label{subsec:Semisimple-Algebraic-Groups}}

\begin{lem}
Let $G_{1}=\mathbf{G}_{1}(\mathbb{C})$, $G_{2}=\mathbf{G}_{2}(\mathbb{C})$
be connected adjoint simple algebraic groups and let $R<G_{1}\times G_{2}$
be a proper algebraic subgroup that projects onto both $G_{1}$ and
$G_{2}$. Then there is an isomorphism $f:G_{1}\to G_{2}$ such that
$R=\left\{ (g,f(g))\middle|g\in G_{1}\right\} $.
\end{lem}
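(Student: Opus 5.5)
This is a Goursat-type argument. Let $\pi_1,\pi_2$ be the two projections of $R$. Set $N_1=\{g\in G_1:(g,1)\in R\}$ and $N_2=\{g\in G_2:(1,g)\in R\}$; these are Zariski closed subgroups, since $N_1\times\{1\}=R\cap(G_1\times\{1\})$.

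\textbf{Step 1: $N_1$ and $N_2$ are normal.} Let $n\in N_1$ and $g\in G_1$. Because $\pi_1$ is surjective, pick $(g,h)\in R$. Then $(g,h)(n,1)(g,h)^{-1}=(gng^{-1},1)\in R$, so $gng^{-1}\in N_1$. The same argument applies to $N_2$.

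\textbf{Step 2: $N_1$ and $N_2$ are trivial.} Since $G_1$ is simple as an algebraic group, a closed normal subgroup is either finite central or all of $G_1$. Adjointness kills the centre, so $N_1\in\{1,G_1\}$, and similarly $N_2\in\{1,G_2\}$. Suppose $N_1=G_1$. Then $G_1\times\{1\}\subseteq R$. For any $h\in G_2$, surjectivity of $\pi_2$ gives some $(g,h)\in R$, and multiplying by $(g^{-1},1)$ gives $(1,h)\in R$. Hence $R=G_1\times G_2$, contradicting properness. Therefore $N_1=1$, and symmetrically $N_2=1$.

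\textbf{Step 3: the graph map.} Since $N_1=N_2=1$, each $g\in G_1$ has a unique $f(g)\in G_2$ with $(g,f(g))\in R$. Thus $R$ is the graph of a map $f$, and $f$ is an abstract group isomorphism $G_1\to G_2$. It is injective because $N_2$ is trivial and surjective because $\pi_2$ is.

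\textbf{Step 4: $f$ is a morphism of algebraic groups (the main obstacle).} Write $f=\pi_2\circ\pi_1^{-1}$. The map $\pi_1:R\to G_1$ is a bijective morphism of algebraic groups. In characteristic zero (the standing convention) it is therefore an isomorphism: it is separable, so its differential is injective and hence bijective, since $\dim R=\dim G_1$; alternatively use Zariski's main theorem with $G_1$ smooth and hence normal. Consequently $\pi_1^{-1}$ is a morphism, so $f$ is a morphism. By the same argument applied to $\pi_2$, so is $f^{-1}$. Thus $f$ is an isomorphism of algebraic groups with $R=\{(g,f(g)):g\in G_1\}$.

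If one allows $R$ to be non-connected, nothing changes: Steps 1–3 never used connectedness, and the characteristic-zero argument in Step 4 applies to a bijective morphism of possibly disconnected groups.
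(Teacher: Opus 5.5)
Your proof is correct and is essentially the paper's argument: the paper also shows that $R\cap G_i$ is a proper normal subgroup of the adjoint simple group $G_i$, hence trivial, so each $\pi_i|_{R}$ is an isomorphism and $f=\pi_2\circ(\pi_1|_{R})^{-1}$; your Step 4 just makes explicit the characteristic-zero fact (a bijective morphism is an isomorphism) that the paper uses without comment. One harmless slip: $\ker f=N_1$, so injectivity of $f$ comes from $N_1=1$, whereas well-definedness is what uses $N_2=1$; since you proved both are trivial, nothing breaks.
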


\begin{proof}
Denote by $\pi_{i}:G_{1}\times G_{2}\to G_{i}$ the projections. Since
$R$ is a proper subgroup and projects onto $G_{1}$ and $G_{2}$,
it is impossible to have either $G_{2}\subseteq R$ or $G_{1}\subseteq R$.
Thus $R\cap G_{i}$ is a proper normal subgroup of $G_{i}$, which
must therefore be trivial. This means that $\pi_{i}\res_{R}:R\to G_{i}$
is an isomorphism of algebraic groups. Setting $f=\pi_{2}\circ(\pi_{1}\res_{R})^{-1}$,
we get the desired result.
\end{proof}
\begin{lem}
Let $\mathbf{G}_{1},\mathbf{G}_{2}$ be connected simple algebraic
groups. Then $\zr(\mathbf{G}_{1}\times\mathbf{G}_{2})=\zr(\mathbf{G}_{1})+\zr(\mathbf{G}_{2})$.
\end{lem}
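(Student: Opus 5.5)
We argue the two inequalities separately. By Corollary \ref{cor:isogeny} (applied to the perfect group $\mathbf{G}_1\times\mathbf{G}_2$ and its isogeny onto the product of the adjoint quotients), we may assume both $G_1=\mathbf{G}_1(\C)$ and $G_2=\mathbf{G}_2(\C)$ are adjoint. Then the preceding lemma is available: a proper algebraic subgroup of $G_1\times G_2$ projecting onto both factors is the graph of an isomorphism.

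For the lower bound $\zr(\mathbf{G}_1\times\mathbf{G}_2)\ge\zr(\mathbf{G}_1)+\zr(\mathbf{G}_2)$, take irredundant Zariski generating sets $X_1\subset G_1$ and $X_2\subset G_2$. Consider $X=(X_1\times\{1\})\cup(\{1\}\times X_2)$. Its Zariski closure contains $\overline{\langle X_1\rangle}\times\{1\}=G_1\times 1$, and likewise $1\times G_2$, so $X$ generates. Removing an element of $X_1\times\{1\}$ makes the projection to $G_1$ generated by a proper subset of $X_1$. That projection is not dense, so the smaller set cannot generate; the same holds symmetrically. Hence $X$ is irredundant.

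For the upper bound, let $X$ be an irredundant Zariski generating set of $G_1\times G_2$. First we describe the closed subgroups $H$ with $\pi_1(H)=G_1$. Since $\pi_1(H)$ is closed and contains a dense subgroup, we have $\pi_1(H^0)=G_1$ because $G_1$ is connected. Let $K=\pi_2(H)$ and $N=\{g:(1,g)\in H\}\trianglelefteq K$. Then $H$ induces an isomorphism $G_1\cong K/N$, so $H=\{(g,k):kN=f(g)\}$. Because $G_1$ is simple adjoint, either $K=N$, in which case $H=G_1\times K$, or $K/N\cong G_1$.

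With this description we build a small generating subset of $X$. Choose $I_1\subset X$ minimal such that $\pi_1(I_1)$ generates $G_1$, so $|I_1|\le\zr(\mathbf{G}_1)$. Choose $J\subset X\setminus I_1$ minimal such that $I_1\cup J$ generates. Irredundance of $X$ forces $X=I_1\cup J$. The goal is $|J|\le\zr(\mathbf{G}_2)$. For each $j\in J$ let $H_j=\overline{\langle X\setminus\{j\}\rangle}$, a proper closed subgroup projecting onto $G_1$, so it has one of the two forms above. The plan is to pass to $G_2$: first replace $J$ using commutators with $I_1$. By perfectness of $G_1$ and the commutator lemma preceding Corollary \ref{cor:isogeny}, commutators of elements of $\langle X\rangle$ fill out dense subsets. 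The intention is to manufacture, for each $j$, an element of $\{1\}\times G_2$ that lies in $\overline{\langle I_1\cup\{j\}\rangle}$ but not in $H_j$. This should yield a set of $|J|$ elements of $G_2$ (together with $\pi_2$ of generators of $N$) that generates $G_2$ irredundantly, hence $|J|\le\zr(\mathbf{G}_2)$.

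The main obstacle is the "graph" case. Here $H_j$ is essentially the graph of an isomorphism $G_1\to K/N$, for instance when $G_1\cong G_2$ and several elements of $X$ lie on a twisted diagonal. In that case $\pi_2$ alone does not detect irredundance, so the naive choice (take $I_2$ irredundant for $\pi_2$ and set $X=I_1\cup I_2$) fails. To handle it, I would first try to show that if some $H_j$ is a graph, then $K=G_2$ and $N=1$. Then $H_j$ is a graph $\Delta_f$, and we would work in the quotient coordinates $(g_1,g_2)\mapsto g_2f(g_1)^{-1}$ relative to that graph. Since $\Delta_f$ is normalised only by $\Delta_f\cdot Z$, one would then need a counting argument showing that at most $\zr(\mathbf{G}_2)$ of the $j$'s can be outside each $\Delta_f$.
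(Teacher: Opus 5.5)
Your reduction to the adjoint case and your proof of the lower bound are correct and coincide with the paper's. The upper bound, however, is not proved. The decisive step is producing from $J$ a $|J|$-element irredundant generating set of $G_2$ (``together with $\pi_2$ of generators of $N$''). That step is only announced, and as stated it fails exactly in the graph case you single out. Take $G_1=G_2=G$, take $a,b\in G$ with $\langle a,b\rangle$ Zariski dense, $c\neq 1$, and $X=\{(a,a),(b,b),(1,c)\}$. This is an irredundant generating set with $I_1=\{(a,a),(b,b)\}$ and $J=\{(1,c)\}$. Here $H_j$ is the diagonal and $N=1$, yet no single element generates $G_2$. In general $\overline{\langle I_1\rangle}$ acts on $\{1\}\times G_2$ by conjugation through $\pi_2$ (by all of $G_2$ in the graph case). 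So what can be expected to generate $G_2$ is the set of conjugates of the elements you manufacture, not the elements themselves. The counting argument you propose for the graph case is also not supplied. As written, only $\zr(\mathbf{G}_1\times\mathbf{G}_2)\geq\zr(\mathbf{G}_1)+\zr(\mathbf{G}_2)$ is established.

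The missing observation is that the Goursat-type lemma you invoke at the outset already finishes the argument, and the graph case costs only one extra element. Let $n_i=\zr(\mathbf{G}_i)$. Keep your $I_1$ and also choose $S_2\subseteq X$ with $|S_2|\leq n_2$ and $\pi_2(\langle S_2\rangle)$ Zariski dense. Let $R$ be the Zariski closure of $\langle I_1\cup S_2\rangle$. Then $\pi_i(R)=G_i$ for both $i$, since images of algebraic groups under morphisms are closed. If $R=G_1\times G_2$, irredundance gives $X=I_1\cup S_2$ and $|X|\leq n_1+n_2$. Otherwise $R$ is the graph of an isomorphism $f\colon G_1\to G_2$, and $\pi_2(I_1)=f(\pi_1(I_1))$, so $I_1$ alone has dense projections to both factors. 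Since $X$ generates, some $s\in X$ lies off the graph of $f$. The closure of $\langle I_1\cup\{s\}\rangle$ projects onto both factors but is not the graph of any isomorphism $g$: such a $g$ would agree with $f$ on the dense subgroup $\langle\pi_1(I_1)\rangle$, forcing $g=f$. Hence $X=I_1\cup\{s\}$ and $|X|\leq n_1+1\leq n_1+n_2$. This is the paper's proof, and it needs no analysis of the subgroups $H_j$, no commutators, and no counting.
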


\begin{proof}
The inequality $\geqslant$ is straightforward: if $S_{1}$ and $S_{2}$
are two irredundant generating sets of $G_{1}=\mathbf{G}_{1}(\mathbb{C})$
and $G_{2}=\mathbf{G}_{2}(\mathbb{C})$ respectively, then $\left(S_{1}\times\left\{ 1\right\} \right)\cup\left(\left\{ 1\right\} \times S_{2}\right)$
is an irredundant generating set for $G_{1}\times G_{2}$.

The inequality $\leqslant$ requires some more care. Let $Z_{1},Z_{2}$
be the centres of $G_{1},G_{2}$ respectively, so that $Z_{1}\times Z_{2}$
is the centre of $G_{1}\times G_{2}$. By \corref{isogeny}, we have
$\zr(\mathbf{G}_{1})=\zr(\bar{\mathbf{G}}_{1})$, $\zr(\mathbf{G}_{2})=\zr(\bar{\mathbf{G}}_{2})$
and $\zr(\mathbf{G}_{1}\times\mathbf{G}_{2})=\zr(\bar{\mathbf{G}}_{1}\times\bar{\mathbf{G}}_{2})$.
In other words, we may assume $\mathbf{G}_{1}$ and $\mathbf{G}_{2}$
are adjoint.

Set $n_{i}=\zr(\mathbf{G}_{i})$, and suppose $S\coloneqq\left\{ g_{1},\dots,g_{n}\right\} \subseteq G\coloneqq G_{1}\times G_{2}$
generate a Zariski dense subgroup of $G$ with $n>n_{1}+n_{2}$.
Denote by $\pi_{i}:G\to G_{i}$ the projections. We will find a subset
$S'\subseteq S$ of size at most $n_{1}+n_{2}$ such that $\pi_{i}(\left\langle S'\right\rangle )$
is Zariski dense in $G_{i}$, and such that there is no isomorphism
$f:G_{1}\to G_{2}$ for which $f(x_{1})=x_{2}$ for every $(x_{1},x_{2})\in S'$.
We will then deduce the Zariski closure of $\left\langle S'\right\rangle $
must be all of $G_{1}\times G_{2}$ from the previous lemma.

By assumption, there is a subset $S_{1}\subseteq S\coloneqq\left\{ g_{1},\dots,g_{n}\right\} $
of size at most $n_{1}$ such that $\pi_{1}\left(\left\langle S_{1}\right\rangle \right)$
is Zariski dense in $G_{1}$. Similarly, there is a subset $S_{2}\subseteq S$
of size at most $n_{2}$ such that $\pi_{2}(\left\langle S_{2}\right\rangle )$
is Zariski dense in $G_{2}$. We have $\left|S_{1}\cup S_{2}\right|\leqslant n_{1}+n_{2}$.
We now separate to two cases. 
\begin{enumerate}
\item First, assume there is no isomorphism $f:G_{1}\to G_{2}$ such that,
for every $s=(x_{1},x_{2})\in S_{1}\cup S_{2}$, we have $f(x_{1})=x_{2}$.
We claim $\Gamma\coloneqq\left\langle S_{1}\cup S_{2}\right\rangle $
is Zariski dense in $G$. Set $R=\overline{\Gamma}^{Z}$, the Zariski
closure of $\Gamma$ in $G$. Since the image of an algebraic group
under a homomorphism is always closed, and $\pi_{i}(\Gamma)$ is
dense in $G_{i}$ for $i=1,2$ , we get that $\pi_{i}(R)=G_{i}$ for
$i=1,2$. Clearly, there is no isomorphism $f:G_{1}\to G_{2}$ such
that $R=\left\{ (x,f(x))\middle|x\in G_{1}\right\} $. Thus, by the
previous lemma, $R$ is not a proper subgroup, as needed.
\item Now, assume there is an isomorphism $f:G_{1}\to G_{2}$ such that,
for every $s=(x_{1},x_{2})\in S_{1}\cup S_{2}$, we have $f(x_{1})=x_{2}$.
Then, in fact, we don't need $S_{2}$ at all; we know $\left\langle \pi_{1}(S_{1})\right\rangle $
is Zariski dense in $G_{1}$, but we now also get that $\left\langle \pi_{2}(S_{1})\right\rangle =\left\langle f(\pi_{1}(S_{1}))\right\rangle $
is Zariski dense in $G_{2}$. Now, it is impossible that $f(x_{1})=x_{2}$
for every $(x_{1},x_{2})\in S$ (or else, we would get that $S$,
and hence $\left\langle S\right\rangle $, and hence $\overline{\left\langle S\right\rangle }^{Z}$,
are all inside the proper subgroup $\left\{ (x,f(x))\middle|x\in G_{1}\right\} $,
contrary to the assumption $\left\langle S\right\rangle $ is Zariski
dense in $G_{1}\times G_{2}$). Let $s=(x_{1},x_{2})\in S$ be some
element in $S$ such that $f(x_{1})\neq x_{2}$. Then $\left\langle S_{1}\cup\left\{ s\right\} \right\rangle $
is Zariski dense by the same argument as above, and $\left|S_{1}\cup\left\{ s\right\} \right|\leqslant n_{1}+1<n_{1}+n_{2}$.\qedhere
\end{enumerate}
\end{proof}
We can now finish the proof of the semisimple case:
\begin{thm}
\label{thm:ss-case}Let $\mathbf{G}$ be a connected semisimple algebraic
group and let $\mathbf{G}_{1}(\mathbb{C}),\dots,\mathbf{G}_{n}(\mathbb{C})$
be the simple quotients of $\mathbf{G}(\mathbb{C})$. Then 
\[
\zr(\mathbf{G})=\sum_{i=1}^{n}\zr(\mathbf{G}_{i})\leqslant10^{5}\cdot\sum_{i=1}^{n}\mathrm{rank}(\bG_{i})^{10}<\infty.
\]
\end{thm}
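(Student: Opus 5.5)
First, use \corref{isogeny} to reduce to the adjoint case. A connected semisimple group $\mathbf{G}$ is isogenous to its adjoint group $\bar{\mathbf{G}}$, and $\bar{\mathbf{G}}(\mathbb{C})\cong\bar{\mathbf{G}}_{1}(\mathbb{C})\times\dots\times\bar{\mathbf{G}}_{n}(\mathbb{C})$, where the $\bar{\mathbf{G}}_{i}$ are the adjoint forms of the simple quotients $\mathbf{G}_{i}$. Since $\mathbf{G}(\mathbb{C})$ is perfect, \corref{isogeny} gives $\zr(\mathbf{G})=\zr(\bar{\mathbf{G}})$. Likewise $\zr(\mathbf{G}_{i})=\zr(\bar{\mathbf{G}}_{i})$, since each simple quotient is isogenous to its adjoint form. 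It therefore suffices to prove that $\zr(\bar{\mathbf{G}}_{1}\times\dots\times\bar{\mathbf{G}}_{n})=\sum_{i}\zr(\bar{\mathbf{G}}_{i})$.

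Next, argue by induction on $n$, extending the preceding lemma from two simple factors to a simple factor times a semisimple group. Write $G=G'\times G_{n}$ with $G'=G_{1}\times\dots\times G_{n-1}$. The inequality $\geqslant$ is the same disjoint-union construction as before. For $\leqslant$, copy the proof of the two-factor lemma with $G'$ in place of $G_{1}$. This needs a Goursat-type replacement for the first lemma of this subsection. Let $R<G'\times G_{n}$ be a Zariski closed subgroup projecting onto both factors, with all factors adjoint simple. Then $R\cap G_{n}$ is normal in $G_{n}$, so it is trivial or all of $G_{n}$.
\begin{itemize}
\item If $R\cap G_{n}=G_{n}$, then $R=G$.
\item Otherwise $R$ is the graph of a surjective algebraic homomorphism $h:G'\to G_{n}$.
\end{itemize}
Since $G'$ is a product of adjoint simple groups, the kernel of $h$ is a product of all but one factor. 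So $h$ is the projection onto some $G_{j}$ followed by an isomorphism $f:G_{j}\to G_{n}$.

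Now take a Zariski generating set $S$ of size exceeding $\zr(\mathbf{G}')+\zr(\mathbf{G}_{n})$. By induction, choose $S'\subseteq S$ of size at most $\zr(\mathbf{G}')$ whose projection Zariski generates $G'$. Choose $S_{n}\subseteq S$ of size at most $\zr(\mathbf{G}_{n})$ whose projection Zariski generates $G_{n}$. If $S'\cup S_{n}$ does not generate $G$, its closure is the graph of some $f\circ\pi_{j}$. In that case $S_{n}$ is unnecessary, as in case 2 of the two-factor lemma: the projection of $\langle S'\rangle$ onto $G_{n}$ is already dense. Some element $s\in S$ lies off the graph, since $S$ generates $G$. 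Then $S'\cup\{s\}$ has closure projecting onto both $G'$ and $G_{n}$ that is not a graph. One point needs care here: if the closure were the graph of a different $f'\circ\pi_{j'}$, that graph would contain $\langle S'\rangle$ as well. The projection of $\langle S'\rangle$ to $G_{j}\times G_{j'}$ would then lie in both a graph over $G_{j}$ and a graph over $G_{j'}$. This is impossible when $j\neq j'$, since that projection is dense in $G_{j}\times G_{j'}$. If $j=j'$, the two isomorphisms agree on a dense subgroup and so coincide. Hence the closure is all of $G$, and $S$ is redundant. This gives $\zr(\mathbf{G})\leqslant\zr(\mathbf{G}')+\zr(\mathbf{G}_{n})$.

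Finally, the bound follows by applying \corref{simply-connected-ab-al-simple-case} to the simply connected cover of each $\mathbf{G}_{i}$. This is legitimate because \corref{isogeny} shows $\zr$ is unchanged under isogeny. The main obstacle is the Goursat step: classifying the subgroups of $G'\times G_{n}$ that project onto both factors, and making sure the ``extra element'' argument survives when the graph could a priori change after adding $s$. That is why I handle it through the density of $\langle S'\rangle$ in every pair of factors, which itself follows from $\langle S'\rangle$ being dense in $G'$.
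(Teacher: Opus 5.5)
Your proposal is correct and follows the paper's route. You reduce by isogeny via Corollary~\ref{cor:isogeny} (you pass to the adjoint form where the paper passes to the simply connected cover, which makes no difference), induct on the number of simple factors using the two-factor argument, and get the bound from Corollary~\ref{cor:simply-connected-ab-al-simple-case}. Your version is in fact more careful than the paper's ``by induction on the previous lemma'', since that lemma and its Goursat-type input are stated only for two simple factors: your classification of closed subgroups of $G'\times G_{n}$ projecting onto both factors, and your check that adding $s$ cannot land on a different graph, are exactly what that inductive step needs.
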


\begin{proof}
By \corref{isogeny}, $\zr(\mathbf{G})=\zr(\tilde{\mathbf{G}})$
for the algebraic universal covering $\tilde{\mathbf{G}}$ of $\mathbf{G}$,
so we may assume $\mathbf{G}$ is simply connected. In this case,
$\mathbf{G}(\mathbb{C})=\mathbf{G}_{1}(\mathbb{C})\times\cdots\times\mathbf{G}_{n}(\mathbb{C})$,
and each $\mathbf{G}_{i}$ is absolutely almost simple and simply-connected.
It now follows by induction on the previous lemma that $\zr(\mathbf{G})=\sum_{i=1}^{n}\zr(\mathbf{G}_{i})$.
Lastly, choosing a $\mathbb{Z}$-scheme structure of $\mathbf{G}_{i}$,
we get by Corollary \ref{cor:simply-connected-ab-al-simple-case}
that $\zr(\mathbf{G}_{i})\leqslant10^{5}\cdot\mathrm{rank}(\bG_{i})^{10}$
for every $i=1,\dots,n$.
\end{proof}

\subsection{Reductive Algebraic Groups\label{subsec:Reductive-Algebraic-Groups}}
\begin{lem}
\label{lem:rr-cn}We have $\zr((\mathbf{G}_{m})^{n})=n$.
\end{lem}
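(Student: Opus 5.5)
We must show $\zr((\mathbf{G}_m)^n)=n$. For the lower bound, take the $n$ coordinate elements $e_i=(1,\dots,1,t,1,\dots,1)$ with $t\in\C^\times$ of infinite order (say $t=2$) in the $i$-th slot. The group generated by all of them is $\{(t^{k_1},\dots,t^{k_n})\}$, whose Zariski closure is $(\mathbf{G}_m)^n$: a polynomial vanishing on it vanishes on $\{t^k\}^n$, and each factor $\{t^k\}$ is an infinite set, hence Zariski dense in $\mathbf{G}_m$. Dropping $e_i$ leaves a group inside the closed subgroup $\{x_i=1\}$, so the set is irredundant. Hence $\zr\ge n$.

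For the upper bound, we use that every Zariski closed subgroup of the torus $T=(\mathbf{G}_m)^n$ is diagonalizable. Such a subgroup is the common kernel of the characters $\chi\in X^*(T)\cong\Z^n$ that vanish on it. Thus a subgroup $\Gamma\le T(\C)$ is Zariski dense if and only if the only character trivial on $\Gamma$ is the trivial one. Equivalently, $\Gamma$ is dense if and only if the restriction map $X^*(T)\to\mathrm{Hom}(\Gamma,\C^\times)$ is injective.

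Now let $S=\{g_1,\dots,g_m\}$ be a Zariski generating set and consider the pairing $\Z^n\times\Z^m\to\C^\times$, $(\chi,e_j)\mapsto\chi(g_j)$. Let $A=\langle S\rangle$, an abelian group with finitely many generators. The question becomes: given finitely many elements $g_j$ of the torus that together kill no nonzero character, can we keep at most $n$ of them that still kill no nonzero character?

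The cleanest route is by induction on $n$ via the identity component. For a subset $S'\subseteq S$, the closure $\overline{\langle S'\rangle}$ is a closed subgroup $D$ with $\dim D$ at most $n$. Greedily add elements of $S$ one at a time, keeping an element only if it strictly enlarges the closure $D_k=\overline{\langle g_{j_1},\dots,g_{j_k}\rangle}$. Since $\dim D_k\le n$, the dimension can strictly increase at most $n$ times. The difficulty is that an added element may instead only enlarge the finite component group $D_k/D_k^0$, which is not controlled by dimension.

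This is the main obstacle. My plan is to avoid it by choosing a better subset rather than counting steps. First pick a subset $S_0$ of size at most $n$ with $\dim\overline{\langle S_0\rangle}=n$. Such a subset exists: the images of $S$ in $T/D^0$ must eventually fill out the dimension, and each dimension jump needs only one element. Then $\overline{\langle S_0\rangle}\supseteq T$ since $T$ is connected, so $S_0$ is already generating.

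Concretely, take the image in the lattice picture. Consider the map $\Z^m\to T$, and the map $\log|\cdot|$ together with arguments. More simply, the closure $D$ of $\langle S\rangle$ has $D^0=T$ exactly when the rank condition holds. Dimension is additive: $\dim\overline{\langle S'\cup\{g\}\rangle}>\dim\overline{\langle S'\rangle}$ whenever $g\notin\overline{\langle S'\rangle}^{\,0}\cdot(\text{finite})$, that is, whenever the image of $g$ in $T/\overline{\langle S'\rangle}$ has infinite order.

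At each step, if $\dim\overline{\langle S'\rangle}<n$, then some $g\in S$ has infinite-order image. Otherwise $\langle S\rangle$ would map to a torsion subgroup generated by finitely many elements of the positive-dimensional torus $T/\overline{\langle S'\rangle}^{\,0}$, which is finite and therefore not dense. Adding such a $g$ raises the dimension by at least one, so after at most $n$ steps we reach dimension $n$, hence the closure is all of $T$.

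Therefore any generating set of size greater than $n$ is redundant, and $\zr((\mathbf{G}_m)^n)=n$.
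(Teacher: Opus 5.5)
Your argument is correct and is essentially the paper's proof. The paper picks $s\in S$ of infinite order, passes to the lower-dimensional torus $(\mathbb{C}^{\times})^{n}/\overline{\langle s\rangle}^{Z}$ and inducts on $n$; your greedy procedure (each chosen element has infinite-order image modulo the current closure and so raises the dimension, and such an element exists because a finitely generated torsion abelian group is finite) is the same induction unrolled, though the exploratory paragraphs on the character pairing and the $\log|\cdot|$ map play no role and can be deleted.
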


\begin{proof}
Fixing some $x\in\mathbb{C}^{\times}$ of infinite order and letting
$x_{i}\in(\mathbb{C}^{\times})^{n}$ be $x$ in the $i^{\text{th}}$
coordinate and $1$ everywhere else, it is easy to see $\left\{ x_{1},\dots,x_{n}\right\} $
is an irredundant Zariski generating set of ${\mathbf{G}_{m}}^{n}(\mathbb{C})=\left(\mathbb{C}^{\times}\right)^{n}$.

We now prove $\zr((\mathbf{G}_{m})^{n})\leqslant n$ by induction.
Assume $\zr((\mathbb{C}^{\times})^{m})\leqslant m$ for $m<n$ and
let $S\subseteq(\mathbb{C}^{\times})^{n}$ be a Zariski generating
set of size at least $n+1$. Then it admits an element $s\in S$ of
infinite order. Set $C=\overline{\left\langle s\right\rangle }^{Z}$;
then $(\mathbb{C}^{\times})^{n}/C\cong(\mathbb{C}^{\times})^{m}$
for some $m<n$, so that the projection of $S\backslash\left\{ s\right\} $
to the quotient group is redundant. This clearly means $S$ is redundant
as a Zariski generating set of $(\mathbb{C}^{\times})^{n}$.
\end{proof}
\begin{lem}
Let $G=\mathbf{G}(\mathbb{C})$ be a perfect algebraic group. Let
$\Gamma<(\mathbb{C}^{\times})^{n}\times G$ be an abstract subgroup.
Then $\Gamma$ is Zariski dense if and only if its projections to
$(\mathbb{C}^{\times})^{n}$ and to $G$ are Zariski dense.
\end{lem}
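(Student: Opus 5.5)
The forward direction is immediate: the projections $\pi_T\colon(\mathbb{C}^{\times})^{n}\times G\to(\mathbb{C}^{\times})^{n}$ and $\pi_G$ are surjective morphisms of algebraic groups, hence continuous in the Zariski topology. The image of a dense set under a continuous surjection is dense, so if $\Gamma$ is Zariski dense then so are $\pi_T(\Gamma)$ and $\pi_G(\Gamma)$.

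For the converse, I will work with the Zariski closure $R=\overline{\Gamma}^{Z}$, which is an algebraic subgroup, and show $R=(\mathbb{C}^{\times})^{n}\times G$. Images of algebraic groups under morphisms are closed, so $\pi_T(R)=(\mathbb{C}^{\times})^{n}$ and $\pi_G(R)=G$. The key step is to show that $\{1\}\times G\subseteq R$. For this I consider the commutator subgroup $[\Gamma,\Gamma]$. Since the first factor is abelian, $[\Gamma,\Gamma]\subseteq\{1\}\times G$, and $\pi_G([\Gamma,\Gamma])=[\pi_G(\Gamma),\pi_G(\Gamma)]$. Because $\pi_G(\Gamma)$ is Zariski dense in the perfect group $G$, the lemma on perfect groups proved in Section \ref{subsec:Isogenies} shows that $[\pi_G(\Gamma),\pi_G(\Gamma)]$ is Zariski dense in $G$. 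Hence $\{1\}\times G$, which is a closed subset isomorphic to $G$, contains $[\Gamma,\Gamma]$ as a Zariski dense subset. Since $[\Gamma,\Gamma]\subseteq R$ and $R$ is closed, this gives $\{1\}\times G\subseteq R$.

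To conclude, take any $(t,g)$ in $(\mathbb{C}^{\times})^{n}\times G$. Since $\pi_T(R)$ is everything, there is some $(t,h)\in R$. Then $(t,g)=(t,h)\cdot(1,h^{-1}g)\in R$. So $R$ is the whole group and $\Gamma$ is Zariski dense.

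The main point needing care is the step that uses the earlier perfect-group lemma: one has to check that the Zariski topology on $\{1\}\times G$ induced from the product agrees with the Zariski topology of $G$. This holds because $g\mapsto(1,g)$ is a closed embedding. One also uses that commutators commute with the projection $\pi_G$, which is clear since $\pi_G$ is a homomorphism. I do not expect any other obstacle.
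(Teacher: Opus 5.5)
Your proof is correct and follows the paper's argument: pass to the Zariski closure $R$, note that commutators land in $\{1\}\times G$ and use perfectness of $G$ to get $\{1\}\times G\subseteq R$, then use surjectivity of $R$ onto $(\mathbb{C}^{\times})^{n}$. The only difference is that the paper takes commutators in $R$ itself. There the projection of $R$ to $G$ is all of $G$ and $[G,G]=G$, which gives $[R,R]=\{1\}\times G$ purely algebraically, so the paper needs neither the earlier density-of-$[H,H]$ lemma nor the closed-embedding check you flag.
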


\begin{proof}
Clearly, the projections of a Zariski dense subgroup are Zariski dense.
Conversely, assume the projections of $\Gamma$ are dense. Denote
by $R$ the Zariski closure of $\Gamma$, and denote by $\pi_{1},\pi_{2}$
the projections. Then $\pi_{2}(R)=G$, and $[R,R]$ is contained in
$G$, so that $\pi_{2}([R,R])=[R,R]$. Thus, 
\[
R\supseteq[R,R]=\pi_{2}([R,R])=[\pi_{2}(R),\pi_{2}(R)]=[G,G]=G.
\]
This means that $\pi_{1}\res_{R}$ is onto and that $R$ contains
the kernel of $\pi_{1}$. In other words, $R=(\mathbb{C}^{\times})^{n}\times G$.
\end{proof}
\begin{thm}[Theorem \ref{thm:AmenableAndAlgebraic}]
\label{thm:reductive}Let $\mathbf{H}$ be a reductive algebraic
group, and let $\mathbf{G}_{1}(\mathbb{C}),\dots,\mathbf{G}_{n}(\mathbb{C})$
be the simple quotients of $\mathbf{H}(\mathbb{C})$. Then 
\[
\zr(\mathbf{H})=\dim\sr(\mathbf{H})+\sum_{i=1}^{n}\zr(\mathbf{G}_{i})\leqslant\dim\sr(\mathbf{H})+10^{5}\cdot\sum_{i=1}^{n}\rank(\bG_{i})^{10}<\infty.
\]
\end{thm}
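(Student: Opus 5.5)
Write $H=\mathbf{H}(\mathbb{C})$ and $\sr(\mathbf{H})$ for its radical, a central torus $T\cong(\mathbb{C}^{\times})^{k}$ with $k=\dim\sr(\mathbf{H})$. The plan is to reduce to a direct product $(\mathbb{C}^{\times})^{k}\times G$ with $G$ semisimple, and then combine Lemma \ref{lem:rr-cn}, the lemma on Zariski density in $(\mathbb{C}^{\times})^{n}\times G$, and Theorem \ref{thm:ss-case}. Let $D=[H,H]$; it is connected and semisimple, hence perfect. The multiplication map $T\times D\to H$ is an isogeny, since its kernel $T\cap D$ is finite and central.

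\textbf{Step 1: isogeny invariance.} Corollary \ref{cor:isogeny} is stated for perfect groups, and $T\times D$ is not perfect, so I cannot apply it verbatim. Instead I would show directly that if $f:H'\to H$ is an isogeny with $H'=T\times D$, then a tuple in $H'$ Zariski generates $H'$ whenever its image Zariski generates $H$. Here is the argument. Let $R$ be the Zariski closure of the subgroup generated by the tuple. Then $f(R)=H$, so $R$ has finite index modulo the finite central kernel, which gives $\dim R=\dim H'$, and therefore $R\supseteq(H')^{0}=H'$.

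It is slightly cleaner to go the other way: pass to the quotients $H\to H/(T\cap D)$, which is isomorphic to $T/(T\cap D)\times D/(T\cap D)$. Either way the dimension argument works because $H'$ is connected, and it yields $\zr(\mathbf{H})=\zr(\mathbf{T}\times\mathbf{D})$, including the Nielsen version. This handles the lifting direction. For the descent direction, images of generating sets generate, and redundancy transfers by the same equivalence.

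\textbf{Step 2: the product formula.} Next I would show $\zr((\mathbf{G}_m)^{k}\times\mathbf{D})=k+\zr(\mathbf{D})$.

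For $\geq$, take $X_1\times\{1\}\cup\{1\}\times X_2$ with $X_1,X_2$ irredundant generating sets of the two factors. It generates by the density lemma, and removing any element breaks density of one projection.

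For $\leq$, let $S$ be Zariski generating with $|S|>k+\zr(\mathbf{D})$. The projection $\pi_2(S)$ generates $D$, and since $\pi_2(S)$ is a multiset of size exceeding $\zr(\mathbf{D})$, some subset $S_2\subseteq S$ with $|S_2|\le\zr(\mathbf{D})$ has dense projection to $D$; repeated elements are handled trivially. Likewise, Lemma \ref{lem:rr-cn} gives $S_1\subseteq S$ with $|S_1|\le k$ and dense projection to the torus. By the density lemma, $S_1\cup S_2$ is Zariski generating and proper, so $S$ is redundant.

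\textbf{Step 3: conclusion.} Theorem \ref{thm:ss-case} gives $\zr(\mathbf{D})=\sum_i\zr(\mathbf{G}_i)\le 10^{5}\sum_i\rank(\bG_i)^{10}$. The simple quotients of $H$ are exactly those of $D$, since $T$ is central and any simple quotient kills it. This yields the stated formula and bound.

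\textbf{Main obstacle.} The delicate point is Step 1: isogeny invariance for a non-perfect group. There I must make sure that irredundancy in $H$ implies irredundancy in $T\times D$, and not only the converse. This follows because generation is equivalent upstairs and downstairs, and that equivalence is exactly the dimension and connectedness argument above.
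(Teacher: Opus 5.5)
Your proof is correct and follows essentially the same route as the paper: reduce up to isogeny to a product of $(\mathbb{C}^{\times})^{k}$ with a perfect semisimple group, then combine Lemma \ref{lem:rr-cn}, the projection-density lemma and Theorem \ref{thm:ss-case}. The one addition is your explicit finite-index/dimension argument for lifting generation along the isogeny $T\times D\to H$. The paper takes this step for granted with ``up to isogenies'', even though its Corollary \ref{cor:isogeny} is stated only for perfect groups, so your argument fills that gap cleanly.
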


\begin{proof}
Up to isogenies, we may assume 
\[
\mathbf{H}(\mathbb{C})=(\mathbb{C}^{\times})^{r}\times\mathbf{G}_{1}(\mathbb{C})\times\cdots\times\mathbf{G}_{n}(\mathbb{C}),
\]
where $r=\dim\sr(\mathbf{H})$. The inequality $\zr(\mathbf{H})\geqslant\dim\sr(\mathbf{H})+\sum_{i=1}^{n}\zr(\mathbf{G}_{i})$
is straightforward. The inequality $\zr(\mathbf{H})\leqslant\dim\sr(\mathbf{H})+\sum_{i=1}^{n}\zr(\mathbf{G}_{i})$
follows almost immediately from the previous lemma. Suppose 
\[
\gamma_{1},\dots,\gamma_{m}\in(\mathbb{C}^{\times})^{r}\times\mathbf{G}_{1}(\mathbb{C})\times\cdots\times\mathbf{G}_{n}(\mathbb{C})
\]
generate a Zariski dense subgroup of $(\mathbb{C}^{\times})^{r}\times\mathbf{G}_{1}(\mathbb{C})\times\cdots\times\mathbf{G}_{n}(\mathbb{C})$
with $m>r+\sum_{i=1}^{n}\zr(\mathbf{G}_{i})$. Denote by $\pi_{1},\pi_{2}$
the projections to $(\mathbb{C}^{\times})^{r}$ and $\prod_{i=1}^{n}\mathbf{G}_{i}(\mathbb{C})$
respectively. By \lemref{rr-cn}, there is a subset $S_{1}\subseteq\left\{ \gamma_{1},\dots,\gamma_{m}\right\} $
of size at most $r$ such that $\pi_{1}(\left\langle S_{1}\right\rangle )$
is Zariski dense in $(\mathbb{C}^{\times})^{r}$. There is also a
subset $S_{2}\subseteq\left\{ \gamma_{1},\dots,\gamma_{m}\right\} $
of size at most $\zr(\prod_{i=1}^{n}\mathbf{G}_{i})=\sum_{i=1}^{n}\zr(\mathbf{G}_{i})$
such that $\pi_{2}(\left\langle S_{2}\right\rangle )$ is Zariski
dense in $\prod_{i=1}^{n}\mathbf{G}_{i}(\mathbb{C})$. By the last
lemma, $\left\langle S_{1}\cup S_{2}\right\rangle $ is Zariski dense
in $(\mathbb{C}^{\times})^{r}\times\mathbf{G}_{1}(\mathbb{C})\times\cdots\times\mathbf{G}_{n}(\mathbb{C})$,
so we are done.
\end{proof}

\section{Amenable Lie Groups}

In this section, we are back to considering the\emph{ }redundancy
rank of topological groups. Recall that we say $g_{1},\dots,g_{n}\in G$
generate a topological group $G$ if the only closed subgroup containing
all of them is $G$ itself. For a Lie group $G$, we denote by $\sr(G)$
its solvable radical (the maximal connected normal solvable subgroup). 

Using the machinery developed by Abels and Noskov in \cite{AbeNos},
we are able to reduce the problem to simple Lie groups. We begin with
the compact simple case.
\begin{thm}
\label{thm:compact-case}Let $G$ be a connected compact Lie group
with simple quotients $G_{1},\dots,G_{n}$. Then 
\[
\trr(G)\leqslant\dim Z(G)+10^{5}\sum_{i=1}^{n}\mathrm{rank}(G_{i})^{10}<\infty.
\]
\end{thm}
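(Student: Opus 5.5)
Our plan is to pass from the compact Lie group $G$ to a complex reductive algebraic group and then apply Theorem \ref{thm:reductive}. As recalled in the proof of Theorem \ref{thm:main-algebraic}, a compact Lie group $G$ has a unique structure $G=\mathbf{H}(\mathbb{R})$ of a real algebraic group, and its closed subgroups are Zariski closed. Hence a subgroup of $G$ is dense if and only if it is Zariski dense in $\mathbf{H}(\mathbb{R})$, and so
\[
\trr(G)=\zrr^{\mathbb{R}}(\mathbf{H}).
\]
We next use that $\mathbf{H}(\mathbb{R})$ is Zariski dense in $\mathbf{H}(\mathbb{C})$, since $G$ is connected and $\mathbf{H}$ is a connected $\mathbb{R}$-group. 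Therefore a subset of $\mathbf{H}(\mathbb{R})$ that is Zariski generating and irredundant for $\mathbf{H}(\mathbb{R})$ has the same two properties for $\mathbf{H}$ over $\mathbb{C}$. This gives
\[
\zrr^{\mathbb{R}}(\mathbf{H})\le\zr(\mathbf{H}),
\]
which is the easy observation made before the proof of Theorem \ref{thm:main-algebraic}.

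The complexification $\mathbf{H}_{\mathbb{C}}$ of a connected compact Lie group is a connected reductive complex group. So Theorem \ref{thm:reductive} gives
\[
\zr(\mathbf{H})\le\dim\sr(\mathbf{H})+10^{5}\sum_{i}\rank(\mathbf{G}_{i})^{10},
\]
where the $\mathbf{G}_{i}$ are the simple quotients of $\mathbf{H}(\mathbb{C})$. It remains to match these invariants with those of $G$. The radical $\sr(\mathbf{H})$ is the central torus, whose real points form the identity component $Z(G)^{0}$, so $\dim\sr(\mathbf{H})=\dim Z(G)$. The simple quotients of $\mathbf{H}(\mathbb{C})$ are the complexifications of the simple quotients $G_{i}$ of $G$: the Lie algebra of $G$ splits as $\mathfrak{z}\oplus\bigoplus\mathfrak{g}_{i}$ with each $\mathfrak{g}_{i}$ compact simple, and a compact simple Lie algebra has a simple complexification. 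Their complex ranks agree with $\rank(G_{i})$, because a maximal torus of $G_{i}$ complexifies to a maximal torus.

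The main obstacle is making these identifications rigorous. The central points are the Zariski density of $\mathbf{H}(\mathbb{R})$ in $\mathbf{H}(\mathbb{C})$ and the structure of the complexification; both follow from standard facts about compact groups, as in \cite{OniVin}. Beyond that, the proof is just the composite chain of inequalities above, and finiteness follows.
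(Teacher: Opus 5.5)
Your proposal is correct and follows the paper's own proof. The paper likewise writes $\trr(G)=\zrr^{\mathbb{R}}(\mathbf{G})\le\zr(\mathbf{G})$ for the reductive group with $G=\mathbf{G}(\mathbb{R})$ and then invokes Theorem \ref{thm:reductive}. Your extra work — Zariski density of $\mathbf{H}(\mathbb{R})$ in $\mathbf{H}(\mathbb{C})$, the identification $\dim\sr(\mathbf{H})=\dim Z(G)$, and the matching of simple factors and their ranks — fills in details the paper leaves implicit.
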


\begin{proof}
Recall that $\trr(G)=\zrr^{\mathbb{R}}(\mathbf{G})\leqslant\zr(\mathbf{G})$
for the unique reductive algebraic group $\mathbf{G}$ such that $G=\mathbf{G}(\mathbb{R})$,
so the result follows from Theorem \ref{thm:reductive}.
\end{proof}
We first recall the results of Abels--Noskov, and then bound the
redundancy rank of connected amenable Lie groups. For the definition
and an easygoing introduction to amenability, see \cite{cohen2024invitation}.
\begin{defn}
If $G,H$ are topological groups, we say a surjective homomorphism
$f:G\to H$ is \emph{absolutely Gasch\"utz} when the following holds:
if $g_{1},\dots,g_{n}\in G$ are such that $f(g_{1}),\dots,f(g_{n})$
generate $H$, then $g_{1},\dots,g_{n}$ generate $G$.
\end{defn}

For example, if $G$ is a connected topological group, then it is
easy to see that a finite covering map $f:G\to H$ is always absolutely
Gasch\"utz. Therefore, when discussing the topological redundancy rank
of connected groups, one may ignore finite covering maps.

The following is immediate:
\begin{lem}
Let $f:G\to H$ be absolutely Gasch\"utz. Then $\trr(G)=\trr(H)$.
\end{lem}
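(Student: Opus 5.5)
The plan is to show both inequalities $\trr(G)\le\trr(H)$ and $\trr(H)\le\trr(G)$ by transporting irredundant generating tuples along $f$ in each direction. Throughout, the only input is the defining property of absolute Gasch\"utz maps, together with two elementary facts about a continuous surjective homomorphism $f$. First, $f$ sends generating sets of $G$ to generating sets of $H$. Second, combining this with the absolute Gasch\"utz property, a tuple $(g_{1},\dots,g_{n})$ generates $G$ if and only if $(f(g_{1}),\dots,f(g_{n}))$ generates $H$.

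For the first fact, suppose $\langle X\rangle$ is dense in $G$. Then $f(\langle X\rangle)=\langle f(X)\rangle$ is dense in $f(G)=H$, because continuity gives $f(\overline{A})\subseteq\overline{f(A)}$. The second fact then follows by applying the absolute Gasch\"utz property for the reverse implication. This equivalence holds for every subtuple as well, since subtuples are again tuples.

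For $\trr(G)\le\trr(H)$, let $X=(g_{1},\dots,g_{n})$ be an irredundant generating tuple of $G$. By the equivalence, $f(X)$ generates $H$. Suppose some proper subtuple $f(Y)$, with $Y\subsetneq X$, generates $H$. Then the absolute Gasch\"utz property gives that $Y$ generates $G$, contradicting irredundancy. Hence $f(X)$ is irredundant of the same length $n$. One must count with tuples rather than sets here, in case $f$ identifies two entries. If $f(g_{i})=f(g_{j})$ with $i\neq j$, then removing the $j$-th entry from $f(X)$ leaves a generating proper subtuple, which is excluded by the argument just given. So the entries of $f(X)$ remain distinct, and $|f(X)|=n$ as a set too.

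For $\trr(H)\le\trr(G)$, let $(h_{1},\dots,h_{n})$ be an irredundant generating tuple of $H$. By surjectivity, choose lifts $g_{i}$ with $f(g_{i})=h_{i}$. The absolute Gasch\"utz property shows that $(g_{i})$ generates $G$. If a proper subtuple of $(g_{i})$ generated $G$, then its image, a proper subtuple of $(h_{i})$, would generate $H$ by the first fact, contradicting irredundancy. The lifts are distinct because their images are distinct. Taking suprema in both directions gives $\trr(G)=\trr(H)$.

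No step is a genuine obstacle. The only points needing care are the use of continuity of $f$ for the forward implication, which is implicit in the notion of a homomorphism of topological groups, and the tuple-versus-set bookkeeping in the first direction.
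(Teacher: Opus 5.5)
Your proof is correct. You push irredundant generating tuples forward along $f$ and lift them back, using continuity and surjectivity of $f$ plus the absolute Gasch\"utz property; this is the routine verification the paper has in mind when it calls the lemma immediate (it gives no written proof), and your tuple-versus-set bookkeeping for distinct images and distinct lifts is handled correctly.
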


An \emph{Abels--Noskov group} is a group of the form $(S\times A)\ltimes V$,
where $S$ is a connected semisimple Lie group, $A$ is a connected
abelian Lie group, $V$ is a finite-dimensional real vector space,
and the action of $S\times A$ on $V$ is given by a completely reducible
representation without nonzero fixed vectors. Abels and Noskov proved
the following:
\begin{thm}[Abels--Noskov]
\label{thm:AN-Machine}If $G$ is a connected Lie group, then there
is a normal subgroup $B\trianglelefteqslant G$ such that $f:G\to G/B$
is absolutely Gasch\"utz and such that $G/B$ is finitely covered by
an Abels--Noskov group $(S\times A)\ltimes V$. Moreover, $A$ is
isomorphic to $G/\overline{G'}$ and $S$ is locally isomorphic to
$G/\mathrm{R}_{s}(G)$. In particular, $\trr(S)=\trr(G/\mathrm{R}_{s}(G))$.
\end{thm}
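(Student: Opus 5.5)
The theorem attributed to Abels--Noskov is largely quoted from \cite{AbeNos}, so the plan is to take their structure theorem as the main input and check the stated consequences. The approach is to build $B$ by quotienting out the parts of $G$ that play no role in generation, namely the Frattini-type parts.

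First, I would define $B$ as the largest closed normal subgroup for which $G\to G/B$ is absolutely Gasch\"utz. The natural candidates are the following:
\begin{enumerate}
\item the nilradical directions contained in $\overline{[\mathrm{R}_{s}(G),\mathrm{R}_{s}(G)]}$ and $\overline{[G,\mathrm{R}_s(G)]}$ that act trivially modulo commutators, in the spirit of the classical fact that the Frattini quotient of a nilpotent group controls generation;
\item the compact central pieces that are absorbed by finite covers.
\end{enumerate}
Abels and Noskov show that, after dividing out such a subgroup, the solvable radical becomes $A\ltimes V$. Here $A\cong G/\overline{G'}$ is the abelianization, and $V$ is the sum of the nontrivial completely reducible isotypic pieces of the first layer. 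The Levi factor then contributes $S$. The point of the fixed-vector-free condition is that trivial isotypic components of $V$ would be central, and hence absorbed into $A$ or into $B$.

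Second, I would verify the identifications in the statement. The abelianization is preserved by an absolutely Gasch\"utz map of this kind, because $B\le\overline{G'}$. Passing to the Abels--Noskov group and its finite cover, one computes its closed commutator subgroup to be $S\ltimes V$ (using the no-fixed-vectors condition), so the abelian quotient is $A$. For $S$, note that $B\le\mathrm{R}_s(G)$, so $G/\mathrm{R}_s(G)$ is a quotient of $G/B$ by its radical. The radical of the Abels--Noskov group is $A\ltimes V$, with quotient $S$; finite covers only change this up to local isomorphism.

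The final claim, $\trr(S)=\trr(G/\mathrm{R}_s(G))$, follows because $S$ and $G/\mathrm{R}_s(G)$ are connected and locally isomorphic. A locally isomorphic pair of connected semisimple groups maps onto a common quotient by a discrete central subgroup. I would need such a covering map to be absolutely Gasch\"utz: a subgroup that is dense in the quotient has closure whose identity component covers, hence is open, hence is everything by connectedness. Note that the central kernel may be infinite, for example $\pi_1$ of $\mathrm{PSL}_2(\R)$. The paper's remark covers finite coverings, so I expect this to be the main obstacle. My first attempt would be to argue that the closure $H$ of the preimage subgroup is open, because its image is all of the quotient and the kernel is discrete. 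Then $H=G$ by connectedness, and the lemma on absolutely Gasch\"utz maps gives equality of $\trr$.
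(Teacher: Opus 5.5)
Your outline agrees with the paper's proof, which is purely by citation. Existence of $B$ and of the Abels--Noskov cover comes from \cite{AbeNos}. $A\cong G/\overline{G'}$ comes from \cite[Lemma 7.2]{CohVig}. The local isomorphism for $S$ comes from the proof of \cite[Theorem 5.6]{AbeNos}. Finally, $m(S)=m(G/\mathrm{R}_{s}(G))$ follows from \cite[Lemma 4.2]{CohVig}: quotients of connected semisimple Lie groups by discrete central subgroups are absolutely Gasch\"utz.

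\textbf{The last step has a genuine gap.} Take $\pi\colon G\to G/Z$ with $Z$ discrete central, $\Gamma$ finitely generated with $\pi(\Gamma)$ dense, and $H=\overline{\Gamma}$. You assert that $\pi(H)=G/Z$, so $H$ is open. That is the finite-covering argument, and it needs $\pi$ to be a closed map. Once $Z$ is infinite, $\pi$ is not closed, and $\pi(H)$ is only known to be dense. As you note yourself, the infinite-kernel case cannot be skipped here: compare $\widetilde{\mathrm{SL}}_{2}(\mathbb{R})$ and $\mathrm{PSL}_{2}(\mathbb{R})$.

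Your argument never uses semisimplicity, and without it the claim is false. For $\mathbb{R}\to\mathbb{R}/\mathbb{Z}$ and irrational $\alpha$, the image of $\alpha$ generates $\mathbb{R}/\mathbb{Z}$, yet $\overline{\langle\alpha\rangle}=\mathbb{Z}\alpha$ is discrete.

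The missing ingredient is a non-abelian argument, for example:
\begin{enumerate}
\item $HZ$ is dense, and the Lie algebra $\mathfrak{h}$ of $H$ is invariant under $\mathrm{Ad}(HZ)=\mathrm{Ad}(H)$. Hence $\mathfrak{h}$ is an ideal, and $N=H^{0}$ is closed and normal.
\item In $G/N$ the image of $H$ is discrete, while its product with the central image of $Z$ is dense.
\item Take a small identity neighbourhood $U$. On a dense subset of $U$ one has $[hz,h'z']=[h,h']$, which lies in the discrete image of $H$ near $1$ and is therefore trivial.
\item By continuity $U$ is commutative, so $G/N$ is abelian. Being semisimple, it is trivial, so $H=G$.
\end{enumerate}

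\textbf{Your treatment of $B$ is also flawed.} A largest closed normal $B$ with $G\to G/B$ absolutely Gasch\"utz need not exist. For $G=\mathbb{R}/\mathbb{Z}$ every finite subgroup qualifies, since it gives a finite covering of a connected group. But the only closed subgroup containing all of them is $G$, and $G\to\{1\}$ is not absolutely Gasch\"utz.

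Moreover, the containments $B\le\overline{G'}$ and $B\le\mathrm{R}_{s}(G)$, on which your identifications of $A$ and $S$ rest, do not follow from absolute Gasch\"utz-ness:
\begin{itemize}
\item $\tfrac{1}{2}\mathbb{Z}/\mathbb{Z}\le\mathbb{R}/\mathbb{Z}$ violates the first.
\item The centre of $\widetilde{\mathrm{SL}}_{2}(\mathbb{R})$ violates the second.
\end{itemize}
This is why the statement only asserts $A\cong G/\overline{G'}$ and a local isomorphism for $S$. You should either work with the specific $B$ constructed by Abels--Noskov, as the paper does, or prove the weaker facts that suffice. These are: an absolutely Gasch\"utz kernel has finite image in the connected abelian group $G/\overline{G'}$, and discrete image in $G/\mathrm{R}_{s}(G)$.
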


\begin{proof}
The first sentence is \cites[Corollary 5.5 and Corollary 5.8]{AbeNos}.
The fact $A$ is isomorphic to $G/\overline{G'}$ is \cites[Lemma 7.2]{CohVig}.
The fact $S$ is locally isomorphic to $G/\mathrm{R}_{s}(G)$ follows
from the proof of \cites[Corollary 5.8 and Theorem 5.6]{AbeNos}.
The fact $\trr(S)=\trr(G/\mathrm{R}_{s}(G))$ follows from the fact
quotients by discrete central subgroups of connected semisimple Lie
groups are absolutely Gasch\"utz (\cites[Lemma 4.2]{CohVig}).
\end{proof}
\begin{rem}
For those who do not wish to get into the proof of \cites[Theorem 5.6]{AbeNos},
one can avoid referring to it as follows. Since $(S\times A)\ltimes V$
finitely covers $G/B$, it follows $S$ finitely covers a quotient
of $G/\mathrm{R}_{s}(G)$. Thus, modulo the (discrete) centres of
$S$ and $G/\mathrm{R}_{s}(G)$, $S$ is a direct factor of $G/\mathrm{R}_{s}(G)$.
It is therefore easy to see $\trr(S)\leqslant\trr(G/\mathrm{R}_{s}(G))$,
which is all that we will use below.
\end{rem}

\begin{lem}
\label{lem:AN-case}Let $(S\times A)\ltimes V$ be an Abels--Noskov
group. Then $\trr((S\times A)\ltimes V)\leqslant\trr(S)+\trr(A)+\dim V$.
\end{lem}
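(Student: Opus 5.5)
Write $G=(S\times A)\ltimes V$ and let $\pi:G\to S\times A$ be the quotient by $V$. Given a finite generating set $X\subseteq G$ with $|X|>\trr(S)+\trr(A)+\dim V$, the plan is to find a proper generating subset. This is done in three steps: handle the reductive quotient, then the vector part, then combine.

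\emph{The reductive quotient.} First we need that $\pi(\langle Y\rangle)$ is dense in $S\times A$ as soon as the projections of $Y$ to $S$ and to $A$ each generate. This is the topological analogue of the lemma on $(\mathbb{C}^{\times})^{n}\times G$. Let $R$ be the closure of $\langle\pi(Y)\rangle$. Then $\overline{[R,R]}$ lies in $S$ and projects densely onto $\overline{[S,S]}=S$, because $S$ is connected semisimple and hence topologically perfect. So $R\supseteq S$, and $R$ also surjects onto $A$; hence $R=S\times A$. Consequently we can choose $X_{S}\subseteq X$ with $|X_{S}|\le\trr(S)$ whose projection generates $S$, and $X_{A}\subseteq X$ with $|X_{A}|\le\trr(A)$ whose projection generates $A$. (Irredundant generating subsets exist by shrinking; this needs care when $\trr(A)$ is infinite, but then the bound is vacuous.) Put $Y_{0}=X_{S}\cup X_{A}$. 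Then $H_{0}=\overline{\langle Y_{0}\rangle}$ satisfies $\pi(H_{0})=S\times A$.

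\emph{The vector part.} Consider $W=H_{0}\cap V$, a closed subgroup of $V$ that is invariant under conjugation by $H_{0}$, and therefore under the $S\times A$-action, since $V$ is abelian and only $\pi(H_{0})$ acts. The key claim is that for a closed subgroup $H\le G$ with $\pi(H)=S\times A$, the intersection $H\cap V$ is a linear subspace, namely an $S\times A$-subrepresentation. Moreover $H=G$ iff $H\cap V=V$, and in general $H$ is a conjugate of a complement extended by $H\cap V$. To prove this, first note that $(H\cap V)^{0}$ is a subspace. The discrete part is handled as follows: a closed invariant subgroup of $V$ that is a lattice in some quotient would have to be stable under a connected group acting without fixed vectors, and a connected group acting continuously on the countable set $W/W^{0}$ acts trivially. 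That forces the action on $\mathrm{span}(W)/W^{0}$ to be trivial, so this quotient is $0$ by complete reducibility and the absence of fixed vectors.

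\emph{Adding the missing generators.} Now work modulo $W$. By complete reducibility, $V=W\oplus W'$ with $W'$ invariant. The group $H_{0}W'/W'$ is a closed subgroup of $(S\times A)\ltimes V/W'$ meeting $V/W'$ trivially (after checking closedness), so it is a section. Vanishing of $H^{1}$, or at least the fact that this section is conjugate to $S\times A$, is where the no-fixed-vector hypothesis is used again. I expect this step to be the main obstacle: showing that each additional element of $X$ increases $\dim(H\cap V)$ by at least one until it reaches $V$. The approach I would try is the following. If $x\in X\setminus Y_{0}$ satisfies $x\notin H$, then $H'=\overline{\langle H,x\rangle}$ still projects onto $S\times A$, and $H'\cap V\supsetneq H\cap V$ is again a subspace, so the dimension strictly grows. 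Since $X$ generates $G$, such an $x$ exists as long as $H\ne G$. By the characterization above, $H\ne G$ exactly when $H\cap V\ne V$. Hence after at most $\dim V$ additions we reach $G$ using at most $\trr(S)+\trr(A)+\dim V$ elements of $X$, which shows that $X$ is redundant.

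The one point still to verify in this iteration is that $H\cap V\subsetneq H'\cap V$. This holds because if $H'\cap V=H\cap V$, then $H'$ and $H$ would both be extensions of $S\times A$ by the same $W$. Since $H\le H'$, this forces $H'=H$, contradicting $x\notin H$.
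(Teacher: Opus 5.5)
There is a genuine gap: the sentence ``Then $H_{0}=\overline{\langle Y_{0}\rangle}$ satisfies $\pi(H_{0})=S\times A$'' is unjustified, and the rest of your argument depends on it. Your first step only proves that $\pi(\langle Y_{0}\rangle)$ is dense, i.e.\ $\overline{\pi(H_{0})}=S\times A$. Since $V$ is not compact, $\pi$ is not a closed map. Nothing you have said excludes $\pi(H_{0})$ being a proper dense subgroup of $S\times A$. A priori, $H_{0}/(H_{0}\cap V)$ could be a closed, even discrete, subgroup of $G/(H_{0}\cap V)$ that meets $V/(H_{0}\cap V)$ trivially and maps injectively onto a dense, non-closed subgroup of $S\times A$.

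Invariance of $H_{0}\cap V$ under $S\times A$ still follows by density, so your subspace argument survives. The step that breaks is the crucial implication ``$H\le H'$ and $H\cap V=H'\cap V$ force $H=H'$''. It needs, for each $h'\in H'$, some $h\in H$ with $\pi(h)=\pi(h')$, i.e.\ $\pi(H)=S\times A$. With merely dense projections, $H'$ could strictly contain $H$ with the same intersection with $V$ and a larger dense projection. Then $\dim(H\cap V)$ no longer bounds the length of your chain. Your ``section'' remark hides the same issue. Also, as written, $H_{0}W'/W'$ does not meet $V/W'$ trivially: since $W+W'=V$, it contains $V/W'$. The object you mean is $H_{0}/W\le G/W$.

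This surjectivity is the substantive input, not the conjugacy of complements you flagged as the main obstacle; your iteration never uses conjugacy. The paper obtains it from Abels--Noskov \cite[Lemma 6.4]{AbeNos}. That lemma says lifts $(\ell_{1},v_{1}),\dots,(\ell_{n},v_{n})$ of generators of $L=S\times A$ generate a closed subgroup of the form $L\ltimes W$, with $W$ an $L$-submodule. In particular, that subgroup surjects onto $L$. A self-contained proof would need real structure theory of closed subgroups of $L\ltimes V$. For example, one must rule out a discrete subgroup meeting $V$ trivially whose projection is dense in $L$.

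Once you cite or prove this, the rest of your argument is sound. Your greedy chain with $\dim(H\cap V)$ strictly increasing is a valid substitute for the paper's route, which replaces the remaining generators by pure translations $(1,v_{j})$ and then uses linear dependence. Your proof that closed $L$-invariant subgroups of $V$ are subspaces is also correct: the connected group acts trivially on the countable discrete group $W/W^{0}$, and complete reducibility plus the absence of fixed vectors finish it. That supplies a step the paper leaves implicit.
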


\begin{proof}
Set $L=S\times A$ and $n=\trr(L)$. Observe that $n=\trr(S)+\trr(A)$
by \cites[Lemma 5.1]{CohVig}. Now, let $(\ell_{1},v_{1}),\dots,(\ell_{m},v_{m})\in L\ltimes V$
be generators with $m>n+\dim V$. Up to rearrangement, we may assume
$\ell_{1},\dots,\ell_{n}$ generate $L$. By \cites[Lemma 6.4]{AbeNos},
$(\ell_{1},v_{1}),\dots,(\ell_{n},v_{n})$ generate a subgroup of
the form $L\ltimes W$ for an $L$-submodule $W\subseteq V$. In particular,
the subgroup they generate contains $L$, and therefore the subgroup
generated by $(\ell_{1},v_{1}),\dots,(\ell_{m},v_{m})$ is equal to
the subgroup generated by
\[
(\ell_{1},v_{1}),\dots,(\ell_{n},v_{n}),(1,v_{n+1}),\dots,(1,v_{m}).
\]
By assumption, $m-n>\dim V$. Therefore, up to rearrangement, $v_{m}$
is contained in the span of $v_{n+1},\dots,v_{m-1}$. It follows that
the subgroup generated by
\[
(\ell_{1},v_{1}),\dots,(\ell_{n},v_{n}),(1,v_{n+1}),\dots,(1,v_{m-1})
\]
 contains $(1,v_{m})$. Thus, this element is redundant.

The following in particular proves Theorem \ref{thm:master}(\ref{enu:amenable}).
\end{proof}
\begin{thm}
\label{thm:amenable-groups-bounds}If $G$ is a connected amenable
Lie group, then $\trr(G)$ is finite. More precisely, we have
\begin{align*}
\trr(G) & \leqslant\trr(G/\mathrm{R}_{s}(G))+\dim\sr(G)+\trr(G/\overline{G'})-\dim(G/\overline{G'})
\end{align*}
\end{thm}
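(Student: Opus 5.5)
The plan is to run the Abels--Noskov reduction (Theorem \ref{thm:AN-Machine}) and then finish with a dimension count. By Theorem \ref{thm:AN-Machine} there is a normal subgroup $B\trianglelefteqslant G$ with $G\to G/B$ absolutely Gasch\"utz. By the lemma following the definition of absolutely Gasch\"utz maps, $\trr(G)=\trr(G/B)$.

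Moreover, $G/B$ is finitely covered by an Abels--Noskov group $(S\times A)\ltimes V$. This group is connected, since $S$, $A$ and $V$ are. As remarked after the definition, a finite covering map out of a connected group is absolutely Gasch\"utz, so $\trr(G/B)=\trr((S\times A)\ltimes V)$. Lemma \ref{lem:AN-case} then gives
\[
\trr(G)\leqslant\trr(S)+\trr(A)+\dim V.
\]
Theorem \ref{thm:AN-Machine} supplies $\trr(S)=\trr(G/\sr(G))$ (the remark after it shows that $\leqslant$ suffices and can be obtained more cheaply). It also gives $A\cong G/\overline{G'}$, so $\trr(A)=\trr(G/\overline{G'})$ and $\dim A=\dim(G/\overline{G'})$.

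It remains to show $\dim V\leqslant\dim\sr(G)-\dim(G/\overline{G'})$, that is, $\dim A+\dim V\leqslant\dim\sr(G)$. The plan is a dimension count. A finite covering preserves dimension, so
\[
\dim S+\dim A+\dim V=\dim(G/B)\leqslant\dim G.
\]
Since $S$ is locally isomorphic to $G/\sr(G)$, we have $\dim S=\dim G-\dim\sr(G)$. Substituting this into the previous inequality gives $\dim A+\dim V\leqslant\dim\sr(G)$, which yields the displayed inequality of the theorem.

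Finiteness is where amenability enters. For connected amenable $G$, the semisimple quotient $G/\sr(G)$ is compact, so $\trr(G/\sr(G))<\infty$ by Theorem \ref{thm:compact-case}. The abelian term $\trr(G/\overline{G'})$ is finite by the results on connected abelian Lie groups in \cite{CohVig}; concretely, a connected abelian Lie group $\R^{a}\times\T^{b}$ should have $\trr$ bounded linearly in its dimension. All remaining terms are dimensions, hence finite.

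I expect the main obstacle to be bookkeeping rather than new mathematics. One point is to check that each identification really is an equality, or at least the correct inequality, of $\trr$; in particular, that the finite cover $(S\times A)\ltimes V\to G/B$ goes in the direction that preserves $\trr$. The other is to make sure the dimension comparison $\dim S=\dim G-\dim\sr(G)$ uses only the local isomorphism stated in Theorem \ref{thm:AN-Machine}.
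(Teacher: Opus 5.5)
Your proposal is correct and follows the paper's proof essentially step for step. Both use the Abels--Noskov reduction, the fact that the quotient map and the finite cover are absolutely Gasch\"utz, Lemma \ref{lem:AN-case}, and the dimension count $\dim V\leqslant\dim\sr(G)-\dim(G/\overline{G'})$. Your extra paragraph on finiteness (compactness of $G/\sr(G)$ for amenable $G$, Theorem \ref{thm:compact-case}, and the abelian case from \cite{CohVig}) makes explicit what the paper leaves to the remark following the theorem.
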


\begin{proof}
By Theorem \ref{thm:AN-Machine}, there is a normal subgroup $B\trianglelefteqslant G$
such that $f:G\to G/B$ is absolutely Gasch\"utz and such that $G/B$
is finitely covered by an Abels--Noskov group $(S\times G/\overline{G'})\ltimes V$,
where $S$ is locally isomorphic to $G/\sr(G)$ and hence $\trr(S)=\trr(G/\sr(G))$.
Since $f$ and finite covering maps are absolutely Gasch\"utz, we have
(by Lemma \ref{lem:AN-case})
\[
\trr(G)=\trr\left((S\times G/\overline{G'})\ltimes V\right)\leqslant\trr(G/\sr(G))+\trr(G/\overline{G'})+\dim V.
\]
Plugging in 
\begin{align*}
\dim V & \leqslant\dim G-\dim G/\overline{G'}-\dim G/\sr(G)\\
 & =\dim\sr(G)-\dim G/\overline{G'},
\end{align*}
we get:
\[
\trr(G)\leqslant\trr(G/\sr(G))+\dim\sr(G)+\trr(G/\overline{G'})-\dim(G/\overline{G'}).\qedhere
\]
\end{proof}
\begin{rem}
By \cite[Theorem 1.5]{CohVig}, 
\[
\trr(G/\overline{G'})=2\dim G/\overline{G'}-\dim T
\]
for a maximal torus $T\leqslant G/\overline{G'}$. Thus, 
\[
\trr(G)\leqslant\trr(G/\mathrm{R}_{s}(G))+\dim\sr(G)+\dim(G/\overline{G}')-\dim T.
\]
Recall that $G/\mathrm{R}_{s}(G)$ is (up to finite coverings) a direct
product of connected compact simple Lie groups $K_{1},\dots,K_{n}$.
Theorem \ref{thm:compact-case} thus says that $\trr(G/\mathrm{R}_{s}(G))\leqslant10^{5}\sum_{i=1}^{n}\mathrm{rank}(K_{i})^{10}$.
Putting everything together, we get
\[
\trr(G)\leqslant10^{5}\sum_{i=1}^{n}\mathrm{rank}(K_{i})^{10}+\dim\sr(G)+\dim(G/\overline{G}')-\dim T.
\]
\end{rem}

\section{\label{subsec:Non-Amenable-Lie-Groups}Non-Amenable Lie Groups}

We have seen that $m(G)<\infty$ when $G$ is amenable. Towards proving
Theorem \ref{thm:master}(\ref{enu:non-amenable}), we show the following.
\begin{thm}
\label{thm:non-compact-simple m(G)=00003Dinf}Let $G$ be a connected
centre-free non-compact simple Lie group. Then $\mu(G)=\infty$. 
\end{thm}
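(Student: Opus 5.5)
The plan is to follow Minsky's strategy via primitive-stable representations, as announced before Theorem \ref{Thm:domain of discontinuity}. Since $G$ is centre-free, connected, simple and non-compact, it is linear (via $\Ad$), semisimple and has no compact factors, so that theorem applies. Fix an even $n\geq2$. We then get a nonempty open set $\mathcal{U}\subset\mathrm{Hom}(F_{n},G)$ of primitive-stable representations with dense image, on which $\mathrm{Out}(F_{n})$ acts properly discontinuously. The goal is to show that a generic $\rho\in\mathcal{U}$, viewed as the tuple $X=(\rho(e_{1}),\dots,\rho(e_{n}))$, is Nielsen irredundant. Since $n$ is arbitrary, this gives $\nrr(G)=\infty$, and hence $\rr(G)=\infty$.

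\textbf{Step 1: reduce Nielsen irredundance to a countable avoidance condition.} Suppose $X$ is Nielsen redundant. Then there is $\sigma\in\mathrm{Aut}(F_{n})$ such that some $n-1$ entries of $\sigma(X)$ already generate $G$. Equivalently, there is a free factor $A<F_{n}$ of rank $n-1$ with $\overline{\rho(A)}=G$. The condition for a fixed $A$ is a closed-subgroup condition, so I will handle it as follows. The key point I want is that primitive-stability forces every proper free factor to have non-dense image. Primitive-stability says that the orbit maps of primitive elements are uniform quasi-geodesics, in the Morse/Anosov sense of \cite{kim2021primitive}. Elements of a proper free factor are not all primitive, however, so this is not immediate.

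\textbf{Step 2: use the free factors that are forced to be discrete.} The cleaner route is Minsky's. It is an open condition that the restriction of $\rho$ to a fixed free factor $A$ of rank $n-1$ is Anosov (Schottky), hence discrete, hence non-dense. I will show that, after shrinking $\mathcal{U}$ to an open subset $\mathcal{U}'$, for every $\rho\in\mathcal{U}'$ all primitive-rank-$(n-1)$ free factors have discrete image. Primitive-stability gives uniform quasi-geodesic constants for all primitive words, including all words of every free factor's basis elements. The intended argument then runs: a local-to-global principle for Morse quasi-geodesics in higher rank (Kapovich--Leeb--Porti) upgrades uniform control on primitive elements of a basis of $A$ to $\rho|_{A}$ being Anosov, provided the representation is close to one where this is known. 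To arrange this, I will choose the open set inside the construction of Theorem \ref{Thm:domain of discontinuity}. I expect that construction, as in Minsky, to start from a representation $\rho_{0}$ that is Schottky on the rank-$(n-1)$ subgroup $\langle e_{1},\dots,e_{n-1}\rangle$ and whose remaining generator is chosen to make the image dense. Any free factor of rank $n-1$ has the form $\phi(\langle e_{1},\dots,e_{n-1}\rangle)$ for some $\phi\in\mathrm{Aut}(F_{n})$. By equivariance and the uniform constants on the primitive-stable locus, discreteness of all these restrictions should propagate.

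\textbf{Step 3: conclude.} For $\rho\in\mathcal{U}'$, every subgroup $\rho(A)$ with $A$ a proper free factor lies in a discrete subgroup, so it is not dense. Meanwhile $\rho(F_{n})$ is dense. Hence $X$ is a dense, Nielsen irredundant generating $n$-tuple of $G$.

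The main obstacle is Step 2: showing uniformly over all of $\mathrm{Aut}(F_{n})$ that rank-$(n-1)$ free factors have non-dense image. If the local-to-global argument is too delicate, the fallback is a Baire category argument. For each of the countably many free factors $A$, the set of $\rho$ with $\overline{\rho(A)}\neq G$ is shown to contain an open dense subset of $\mathcal{U}$. By $\mathrm{Out}(F_{n})$-equivariance and the proper discontinuity of the action on $\mathcal{U}$, it suffices to check this for finitely many orbit representatives, namely for $A_{0}=\langle e_{1},\dots,e_{n-1}\rangle$, where it holds on the Schottky locus. The delicate point in this fallback is that non-density is not an open condition in general. So I would still need the discreteness of $\rho|_{A_{0}}$, which is open, on a neighbourhood meeting every orbit.
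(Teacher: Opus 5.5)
There is a genuine gap, and it is exactly the step you flag: nothing in Step 2 proves that a primitive-stable $\rho$ has non-dense image on every proper free factor. The ``propagation'' mechanism fails as stated. Openness of the Anosov condition for $\rho|_{\phi(A_0)}$ gives a neighbourhood that depends on $\phi$, and the intersection over the infinitely many $\phi\in\mathrm{Aut}(F_n)$ need not contain any open set. A local-to-global principle would need uniform control on bounded-length subwords of geodesics in $A$. Those subwords are themselves, in general non-primitive, elements of $A$, so you are back at the original problem; knowing that the basis elements of $A$ are primitive says nothing about their products.

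The Baire fallback is circular. Density of $\rho(A)$ is an open condition (as for $\mathcal{E}$), so $\{\rho:\overline{\rho(A)}\neq G\}$ is relatively closed in $\mathcal{U}$. If it contains an open dense subset of $\mathcal{U}$, it is already all of $\mathcal{U}$, so Baire buys nothing. Moreover, a neighbourhood on which $\rho|_{A_0}$ is Schottky that merely meets every orbit only gives, for each $\rho$, \emph{some} $\phi$ with $\rho|_{\phi(A_0)}$ discrete. Nielsen irredundance needs this for \emph{all} $\phi$.

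The missing idea is a one-line observation: every element of a proper free factor lies on a primitive ray. Take $a\in\Lambda_k=\langle s_1,\dots,s_k\rangle$ with $k<n$. Then $as_{k+1}$ is primitive (it is obtained from $s_{k+1}$ by Nielsen moves) and cyclically reduced, and $a$ is a prefix of the ray $(as_{k+1})(as_{k+1})\cdots$. So the uniform constants of $\theta$-primitive-stability directly give $\alpha(\mu_G(\rho(a)))\geq c|a|_S-C'$ for all $\alpha\in\Sigma_\theta^+$. That is, $\rho|_{\Lambda_k}$ is $\theta$-Anosov and hence has discrete image.

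For an arbitrary proper free factor $\sigma(\Lambda_k)$, apply the same argument to $\rho\circ\sigma$, which is again $\theta$-primitive-stable. The resulting constants depend on $\sigma$, but that is harmless: each restriction only needs to be discrete on its own, so the uniformity over $\mathrm{Aut}(F_n)$ you were worried about is never needed. This works for \emph{every} primitive-stable $\rho$. You therefore need no shrinking of $\mathcal{U}$, no genericity, and no Schottky-on-$A_0$ starting point. Any single $\theta$-primitive-stable $\rho$ with dense image (whose existence you correctly import) gives a Nielsen irredundant generating $n$-tuple. A dense $\rho(\Lambda)$ for a proper free factor $\Lambda$ would contradict discreteness in the positive-dimensional group $G$.
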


Let us explain how Theorem \ref{thm:non-compact-simple m(G)=00003Dinf}
implies Theorem \ref{thm:master}(\ref{enu:non-amenable}):
\begin{proof}[Proof of Theorem \ref{thm:master}]
 Note that Theorem \ref{thm:master}(\ref{enu:amenable}) was already
established above (see Theorem \ref{thm:amenable-groups-bounds}).
For Theorem \ref{thm:master}(\ref{enu:non-amenable}), let $G$ be
a connected non-amenable Lie group. Then it admits a quotient $f:G\to G_{0}$
such that $G_{0}$ is a connected centre-free non-compact simple Lie
group, so that $\nrr(G_{0})=\infty$ by Theorem \ref{thm:non-compact-simple m(G)=00003Dinf}.
Therefore, there are arbitrarily large Nielsen irredundant generating
systems $g_{1},\dots,g_{n}\in G_{0}$. By the Gasch\"utz Lemma for connected
Lie groups \cite{CohVig}, we may find generating lifts $\bar{g}_{1},\dots,\bar{g}_{n}\in G$.
It is easy to see $\bar{g}_{1},\dots,\bar{g}_{n}$ are also Nielsen
irredundant (otherwise, we would get that $g_{1},\dots,g_{n}$ are
Nielsen redundant). Thus, $\nrr(G)=\infty$ as well.
\end{proof}

\subsection{Primitive-stable and Anosov representations}

The proof of Theorem \ref{thm:non-compact-simple m(G)=00003Dinf}
uses the notion of primitive-stable representations, introduced by
Minsky for rank one groups and extended to higher rank in \cite{minsky2013dynamics,kim2021primitive}.

Let $G$ be a connected semisimple linear Lie group without compact
factors. Fix a Cartan decomposition 
\[
G=K\exp(\overline{\mathfrak{a}^{+}})K,\qquad\mu_{G}:G\to\overline{\mathfrak{a}^{+}},
\]
and let $\Delta$ be the corresponding set of simple restricted roots.
Let $X=G/K$ be the associated symmetric space, and fix a base point
$o\in X$. The following definition should be compared with Theorem
1.3(4) and Remark 1.6(b) in \cite{GueritaudGuichardKasselWienhard2017AnosovProper}. 

\begin{defn}
\label{def:uru-root}Let $\Gamma$ be a hyperbolic group with a fixed
finite generating set $S$, and let $\mathcal{L}$ be a collection
of geodesic rays emanating from the identity in the Cayley graph of
$\Gamma$. For a non-empty subset of roots $\theta\subset\Delta$,
a representation $\rho:\Gamma\to G$ is said to be $(\mathcal{L},\theta)$\textit{-uniformly
regular} if there are constants $C,c>0$ such that, for every $(\gamma_{m})_{m\in\N}\in\mathcal{L}$,
\[
\alpha\bigl(\mu_{G}(\rho(\gamma_{m}))\bigr)\geq c\,|\gamma_{m}|_{S}-C,\qquad\forall\alpha\in\Sigma_{\theta}^{+},
\]
where $\Sigma_{\theta}^{+}$ is the collection of positive roots not
in $\mathrm{Span}(\Delta\backslash\theta)$.
\end{defn}

\begin{lem}
\label{lem:anosov passes to supgroups}Suppose that $H<G$ is a subgroup
locally isomorphic to $\mathrm{PSL}_{2}(\R)$ and denote the inclusion
map by $\iota$. Then there is a non-empty subset $\theta\subset\Delta$
such that, if $\rho:\Gamma\to H$ is $\mathcal{L}$-uniformly regular,
then $\iota\circ\rho$ is $(\mathcal{L},\theta)$-uniformly regular. 
\end{lem}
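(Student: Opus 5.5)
We argue through the Cartan projections. First we fix what ``$\mathcal{L}$-uniformly regular'' means for $\rho:\Gamma\to H$. Since $H$ is locally isomorphic to $\mathrm{PSL}_{2}(\R)$, its restricted root system has a single simple root $\beta$. So the condition reads $\beta(\mu_{H}(\rho(\gamma_{m})))\geq c|\gamma_{m}|_{S}-C$ along rays in $\mathcal{L}$. Equivalently, $\|\mu_{H}(\rho(\gamma_{m}))\|$ grows at least linearly; this is just the displacement $d(o_{H},\rho(\gamma_{m})o_{H})$ in $\H^{2}$.

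\textbf{Step 1: make $H$ compatible with the Cartan decomposition of $G$.} After conjugating $H$ inside $G$, we arrange that $H$ is stable under a Cartan involution of $G$ compatible with $K$. This is possible by Mostow--Karpelevich, because $H$ is reductive and closed: it is the image of a Lie algebra homomorphism $\mathfrak{sl}_{2}(\R)\to\mathfrak{g}$, and the connected subgroup it generates is closed, since simple non-compact subgroups of linear semisimple groups are closed. Then $K_{H}=K\cap H$. The Cartan subspace of $H$ is a line $\R Y\subset\mathfrak{p}$ with $Y=d\iota(\mathrm{diag}(1,-1))$, and we may conjugate $Y$ into $\overline{\mathfrak{a}^{+}}$ by an element of $K$. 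Conjugating $H$ changes $\mu_{G}\circ\iota\circ\rho$ only by a bounded amount, since $\mu_{G}$ is $1$-Lipschitz with respect to left and right multiplication. The defining inequality of Definition \ref{def:uru-root} tolerates such changes by enlarging $C$.

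\textbf{Step 2: compute $\mu_{G}$ on $H$.} For $h=k_{1}\exp(tY_{0})k_{2}$ in the Cartan decomposition of $H$, with $t\geq0$ and $Y_{0}$ the normalized generator, we have $k_{i}\in K$ and $tY_{0}\in\overline{\mathfrak{a}^{+}}$. Hence $\mu_{G}(\iota(h))=t\,Y_{0}$ exactly, while $\mu_{H}(h)=t$ up to normalization, so $\beta(\mu_{H}(h))$ is proportional to $t$. Now define
\[
\theta=\{\alpha\in\Delta:\alpha(Y_{0})>0\}.
\]
This set is non-empty because $Y_{0}\neq0$ lies in $\overline{\mathfrak{a}^{+}}$.

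\textbf{Step 3: check the roots in $\Sigma_{\theta}^{+}$.} Let $\alpha\in\Sigma_{\theta}^{+}$ and write $\alpha=\sum_{\delta\in\Delta}n_{\delta}\delta$ with $n_{\delta}\geq0$. Since $\alpha\notin\mathrm{Span}(\Delta\backslash\theta)$, some $\delta\in\theta$ has $n_{\delta}\geq1$. Every simple root is non-negative on $Y_{0}$, so
\[
\alpha(Y_{0})\geq\min_{\delta\in\theta}\delta(Y_{0})=:\epsilon>0.
\]
Therefore $\alpha(\mu_{G}(\iota\rho(\gamma_{m})))=t\,\alpha(Y_{0})\geq\epsilon t\geq\epsilon'(c|\gamma_{m}|_{S}-C)$, which is the required bound with new constants.

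\textbf{Main obstacle.} The main obstacle is Step 1: justifying that $H$ can be conjugated to be compatible with $K$, and that the Cartan projection of $H$ embeds in $\overline{\mathfrak{a}^{+}}$ exactly as described. If the compatibility argument is delicate, we would first try to avoid it by taking the Jacobson--Morozov/$\mathfrak{sl}_{2}$-triple viewpoint: choose the triple with its semisimple element hyperbolic in $\mathfrak{p}$ after conjugation. We then absorb the conjugating element into the additive constant, as in Step 1.
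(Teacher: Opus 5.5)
Your proposal is correct and follows essentially the same route as the paper: align the Cartan decompositions so that $\mu_G\circ\iota = d\iota\circ\mu_H$ lies on a ray $\mathbb{R}_{\ge 0}v$ in $\overline{\mathfrak{a}^{+}}$, then take $\theta$ among the simple roots that are positive on $v$. You are more explicit than the paper on two points, namely justifying the alignment (Mostow--Karpelevich, with the conjugation absorbed into the additive constant) and showing that every root of $\Sigma_{\theta}^{+}$ is positive on $v$; you also take the maximal such $\theta$ instead of a singleton $\{\alpha\}$, but these are refinements of the same argument rather than a different one.
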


\begin{proof}
We may choose the Cartan decomposition of $G$ so that when restricting
to $\iota(H)$ we get a Cartan decomposition of $\iota(H)$. Then
in particular, the derivative $d\iota$ takes the positive Weyl cone
$\left\{ X_{t}\right\} _{t\geqslant0}$ of the Lie algebra of $H$
into the positive Weyl cone $\mathfrak{a}^{+}$ of $G$. Hence $d\iota(X_{t})=tv$
for some $0\ne v\in\mathfrak{a}^{+}$. Moreover, we have the following
compatibility between the Cartan projections 
\[
\mu_{G}\circ\iota=d\iota\circ\mu_{H}.
\]
Choose a root $\alpha\in\Delta$ for which $\alpha(v)\ne0$ and set
$\theta=\{\alpha\}$, so $\alpha'(v)\neq0$ for every $\alpha'\in\Sigma_{\theta}^{+}$.
Given $\left(\gamma_{m}\right)_{m\in\N}\in\mathcal{L}$, we have,
by assumption, that $\mu_{H}(\rho(\gamma_{m}))\in\R_{\geq0}$ grows
linearly in $m$ (independently of the geodesic in $\mathcal{L}$).
Therefore we may write 
\[
d\iota(\mu_{H}(\rho(\gamma_{m})))=t_{m}v
\]
for linearly growing $t_{m}\geq0$. Therefore
\[
\alpha'\left(\mu_{G}(\iota\circ\rho(\gamma_{m}))\right)=\alpha'\left(d\iota(\mu_{H}(\rho(\gamma_{m})))\right)=t_{m}\alpha'(v)\quad\forall\alpha'\in\Sigma_{\theta}^{+}.
\]
This shows $\iota\circ\rho$ is $(\mathcal{L},\theta)$-uniformly
regular. 
\end{proof}
We now restrict attention to two classes $\mathcal{L}$ of geodesic
rays.

Suppose first that $\mathcal{L}$ is taken to be the set of all geodesic
rays. Then the notion of a $(\mathcal{L},\theta)$-uniformly regular
representation coincides with that of $\theta$-\textit{Anosov representations}
\cite[Theorem 1.3]{GueritaudGuichardKasselWienhard2017AnosovProper}. 

Suppose now that $\Gamma=F_{n}$ is the free group on $n$ generators,
and $\mathcal{L}$ is taken to be all geodesic rays $\gamma\gamma\gamma\cdots$
for a primitive and cyclically-reduced $\gamma\in F_{n}$ (recall
that $\gamma$ is primitive if it is part of a generating set of $F_{n}$
of size $n$). Then the notion of a $(\mathcal{L},\theta)$-uniformly
regular representation coincides with that of $\theta$-\textit{primitive-stable
representation} as defined in \cite{minsky2013dynamics} in rank $1$
and in \cite{kim2021primitive} in general (see also \cite[Remark 1.6(b)]{GueritaudGuichardKasselWienhard2017AnosovProper}).
Note \cite{kim2021primitive} follows the terminology of \cite{KapovichLeebPorti2014MorseActions},
but the notions are equivalent due to the higher rank Morse Lemma
\cite[Theorem 1.3]{kapovich2018morse}.

The following lemma was mentioned in \cite{Kim20}, and it is proved
in \cite{KimPortiUniformlyPrimitiveStable}, a draft of which Inkang
Kim has kindly shared with us. For completeness, we provide a proof.
\begin{lem}
\label{lem:primitive-stable-free-factors}Let $\rho:F_{n}\to G$ be
$\theta$-primitive-stable. If $\Lambda<F_{n}$ is a proper free factor,
then $\rho|_{\Lambda}:\Lambda\to G$ is $\theta$-Anosov.
\end{lem}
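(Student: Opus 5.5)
The plan is to check the Anosov condition on $\Lambda$ directly through the characterisation recalled above: $\theta$-Anosov means $(\mathcal{L},\theta)$-uniformly regular for $\mathcal{L}$ the set of \emph{all} geodesic rays in the Cayley graph of $\Lambda$. Since $\Lambda$ is a proper free factor, we may choose a free basis $a_{1},\dots,a_{n}$ of $F_{n}$ with $\Lambda=\langle a_{1},\dots,a_{k}\rangle$ and $k<n$. Word length on $\Lambda$ with respect to $\{a_{1},\dots,a_{k}\}$ then agrees with word length in $F_{n}$, so geodesic rays in $\Lambda$ are geodesic rays in $F_{n}$ all of whose letters lie in $\{a_{1}^{\pm1},\dots,a_{k}^{\pm1}\}$. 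The extra generator $b=a_{n}$ lies outside $\Lambda$, and this is what lets us manufacture primitive elements.

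The key step is a combinatorial observation. For any reduced word $w\in\Lambda$, the element $wb$ is primitive in $F_{n}$, because the Nielsen move $b\mapsto wb$ sends the basis to $\{a_{1},\dots,a_{n-1},wb\}$. It is also cyclically reduced, since $w$ contains no $b^{\pm1}$. Hence primitive-stability gives uniform constants $c,C$ such that, along the periodic ray $(wb)(wb)(wb)\cdots$, every prefix $\gamma_{m}$ satisfies
\[
\alpha\bigl(\mu_{G}(\rho(\gamma_{m}))\bigr)\geq c|\gamma_{m}|-C\qquad\forall\alpha\in\Sigma_{\theta}^{+}.
\]
The prefixes of this ray of length at most $|w|$ are exactly the prefixes of $w$. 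So for every finite geodesic segment $w$ in $\Lambda$ starting at the identity, each prefix $u$ of $w$ satisfies $\alpha(\mu_{G}(\rho(u)))\geq c|u|-C$, with constants independent of $w$. Any geodesic ray $(\gamma_{m})$ in $\Lambda$ is a union of such finite prefixes: apply the estimate to $w=\gamma_{M}$ for $M\geq m$. This gives $(\mathcal{L},\theta)$-uniform regularity on all rays, and hence, by \cite[Theorem 1.3]{GueritaudGuichardKasselWienhard2017AnosovProper}, $\rho|_{\Lambda}$ is $\theta$-Anosov.

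The main obstacle is making the uniformity precise. The definition of primitive-stable gives inequalities along each periodic ray $\gamma^{\infty}$ with constants that are uniform over all primitive cyclically reduced $\gamma$, and in particular they must hold for the initial segment of $\gamma^{\infty}$ shorter than one period. I would verify from Definition~\ref{def:uru-root} that the quantifier is indeed over every $m$ of every ray in $\mathcal{L}$, so short prefixes are covered. If instead the notion were only asymptotic, I would use the ray $(wb^{N})^{\infty}$ with $wb^{N}$ still primitive, or else invoke the Morse-lemma equivalence \cite[Theorem 1.3]{kapovich2018morse} to move from uniformly regular quasi-geodesics to Anosov. A minor point is that the paper states the Anosov criterion for rays from the identity only; by equivariance under $\Lambda$ this suffices.
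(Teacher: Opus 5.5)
Your argument is correct and is essentially the paper's proof. Both use that $wb$ (the paper's $as_{k+1}$) is primitive and cyclically reduced, and apply the uniform primitive-stable bound along its periodic ray. The only differences are cosmetic: you read the estimate off the prefix $w$ directly, whereas the paper evaluates at $as_{k+1}$ and then compares neighbours. You also work in an adapted basis from the outset, which tacitly uses the same basis-independence of primitive-stability that the paper invokes when it says $\rho\circ\sigma$ is still primitive-stable.

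Your fallback remark is unnecessary, since the definition is uniform in $m$. It is also wrong as stated: $wb^{N}$ need not be primitive for $N\geq2$. For example, $a_1^2b^2$ in $F_2$ abelianises to $(2,2)$, which is not a primitive vector.
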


\begin{proof}
Let $S=(s_{1},...,s_{n})$ be the generating set of $F_{n}$ fixed
a priori. Consider first the case where the free factor is $\Lambda_{k}=\langle s_{1},\ldots,s_{k}\rangle$
for some $1\leq k<n$. Given $a\in\Lambda_{k}$ and $\alpha\in\Sigma_{\theta}^{+}$,
we want to bound $\alpha\bigl(\mu_{G}(\rho(a))\bigr)$ from below
in terms of a linear function in $|a|_{S}$. We observe that the element
$as_{k+1}$ is primitive: it is obtained from $s_{k+1}$ by the sequence
of Nielsen transformations of left multiplication by the letters of
$a$. Moreover $as_{k+1}$ is cyclically reduced. As $\rho$ is $\theta$-primitive-stable
we have 
\[
\alpha\bigl(\mu_{G}(\rho(as_{k+1}))\bigr)\geq c\,|as_{k+1}|_{S}-C\qquad\forall\alpha\in\Sigma_{\theta}^{+}.
\]
for some constants $c,C>0$. Since $a$ and $as_{k+1}$ are neighbours,
$\left|\alpha\bigl(\mu_{G}(\rho(a))\bigr)-\alpha\bigl(\mu_{G}(\rho(as_{k+1}))\bigr)\right|$
is bounded by a constant that depends only on $\rho(s_{k+1})$. We
thus get that 
\[
\alpha\bigl(\mu_{G}(\rho(a))\bigr)\geq c\,|a|_{S}-C'\qquad\forall\alpha\in\Sigma_{\theta}^{+}
\]
for some new constant $C'$ that depends only on $\rho(s_{k+1})$.
This gives the desired statement for the free factor $\Lambda_{k}$.

Now, given an arbitrary $\sigma\in\mathrm{Aut}(F_{n})$, we may apply
the above to (the still primitive-stable) representation $\rho\circ\sigma$,
and get for all $a\in\Lambda_{k}$ that
\[
\alpha\bigl(\mu_{G}(\rho(\sigma(a)))\bigr)\geq c'\,|a|_{S}-C''\qquad\forall\alpha\in\Sigma_{\theta}^{+}.
\]
Observe that the constants $c',C''$ may depend on $\sigma$. Note
that $|\sigma(a)|_{S}\leq m_{\sigma}|a|_{S}$ where $m_{\sigma}=\max_{i}|\sigma(s_{i})|_{S}$,
so that we get
\[
\alpha\bigl(\mu_{G}(\rho(\sigma(a)))\bigr)\geq c'm_{\sigma}^{-1}\,|\sigma(a)|_{S}-C''\qquad\forall\alpha\in\Sigma_{\theta}^{+}.
\]
This shows that $\rho\mid_{\sigma(\Lambda_{k})}$ is $\theta$-Anosov
(with constants depending on $\sigma$). It is only left to note that
any proper free factor $\Lambda<F_{n}$ is of the form $\sigma(\Lambda_{k})$
for some $\sigma\in\mathrm{Aut}(F_{n})$ and $1\leq k<n$. 
\end{proof}

\subsection{Primitive-stable representations with dense image }

Fix $n$ and let $\Gamma=F_{n}$. Note that we have a natural identification
$\mathrm{Hom}(\Gamma,G)\cong G^{n}$ making it a locally compact second-countable
space endowed with the Haar measure class. We recall the following
facts regarding subsets of $\mathrm{Hom}(\Gamma,G)$:
\begin{enumerate}
\item The set $\mathcal{D}$ consisting of discrete and faithful representations
is closed, see \cite[Lemma 2.1]{minsky2013dynamics} and \cite{BreuillardGelander2003DenseFree}. 
\item The set $\mathcal{E}$ consisting of representations with dense image
is open \cites({Theorem 2.4(b)}){chirvasitu2021large}.
\item The union $\mathcal{D}\cup\mathcal{E}$ is co-null \cite[Lemma 2.1]{minsky2013dynamics}
and \cite{BreuillardGelander2003DenseFree}. 
\item The set $\mathcal{P}_{\theta}$ of all $\theta$-primitive-stable
representations is open \cite[Theorem 1.1]{kim2021primitive}.
\end{enumerate}
\begin{thm}
\label{prop:dense-primitive-stable}Let $G$ be a connected semisimple
linear Lie group without compact factors. There exists a nonempty
$\theta\subset\Delta$ such that, for every even $n\geq2$, there
is a $\theta$-primitive-stable representation $\rho:F_{n}\to G$
with dense image. 
\end{thm}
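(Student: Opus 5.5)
We follow Minsky's rank-one strategy, first producing a primitive-stable representation into a copy of $\mathrm{PSL}_2(\R)$ inside $G$, and then perturbing it to get dense image. Since $G$ is semisimple without compact factors, its Lie algebra contains an $\mathfrak{sl}_2(\R)$-triple (for example, attached to a root vector of a restricted root). This gives a closed subgroup $H<G$ locally isomorphic to $\mathrm{PSL}_2(\R)$. We fix this $H$ once and for all, and let $\theta=\{\alpha\}$ be the set given by Lemma \ref{lem:anosov passes to supgroups}; it depends only on $H$, so the same $\theta$ works for all $n$.

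For even $n=2g$, we take a closed surface $\Sigma$ of genus $g/ \!$ with one boundary component removed, i.e.\ a compact surface with one boundary circle whose fundamental group is free of rank $n$ (genus $n/2$, one puncture). Following Minsky, there are discrete faithful representations $\rho_0:F_n\to \mathrm{PSL}_2(\R)$ that are primitive-stable but \emph{not} convex cocompact. These are the holonomies of finite-area hyperbolic structures on the once-punctured surface of genus $n/2$, where the boundary word (the product of commutators) is sent to a parabolic. Minsky shows that such representations are primitive-stable because every primitive element is represented by a closed geodesic avoiding a fixed cusp neighbourhood, so primitive words are uniformly quasi-geodesic. In our uniform-regularity language this says that $\rho_0$ is $\mathcal{L}$-uniformly regular for the family $\mathcal{L}$ of primitive rays. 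This is the only place where evenness of $n$ is used. Lifting to $H$ (possible up to finite index issues, using the free group) and composing with $\iota$, Lemma \ref{lem:anosov passes to supgroups} shows that $\iota\circ\rho_0$ is $\theta$-primitive-stable as a representation into $G$.

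Next we use the openness facts listed before the theorem. By fact 4, $\mathcal{P}_\theta$ is open in $\mathrm{Hom}(F_n,G)$ and contains $\iota\circ\rho_0$, so it has positive Haar measure. By fact 3, almost every point of $\mathcal{P}_\theta$ lies in $\mathcal{D}\cup\mathcal{E}$. It therefore suffices to show that $\mathcal{P}_\theta\not\subset\mathcal{D}$ up to a null set, i.e.\ that some open subset of $\mathcal{P}_\theta$ misses $\mathcal{D}$; then $\mathcal{P}_\theta\cap\mathcal{E}\neq\emptyset$ and we are done.

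This last step is the main obstacle. Our first approach is to argue that $\iota\circ\rho_0$ is a limit of non-discrete (or non-faithful) representations near which $\mathcal{D}$ has empty interior. If a neighbourhood $U$ of $\iota\circ\rho_0$ were contained in $\mathcal{D}$ up to measure zero, then, since $\mathcal{D}$ is closed, $U\subset\mathcal{D}$. But in $G$, discrete faithful representations near $\iota\circ\rho_0$ would be Anosov-type structurally stable only if the peripheral element were loxodromic. Instead, we perturb inside $H$: deforming the cusp so that the boundary word $\rho(\prod[a_i,b_i])$ becomes elliptic of irrational rotation angle produces non-discrete representations arbitrarily close to $\rho_0$, exactly as in Minsky. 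Hence $\mathcal{D}$ has empty interior near $\iota\circ\rho_0$. Combined with closedness of $\mathcal{D}$, the set $U\setminus\mathcal{D}$ is nonempty and open, and it is almost entirely contained in $\mathcal{E}$. This yields a $\theta$-primitive-stable representation with dense image.
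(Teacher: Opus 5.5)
Your proof is correct and is essentially the paper's argument: compose a rank-one primitive-stable representation with $\mathrm{SL}_2(\R)\to G$, get a $\theta$ independent of $n$ from Lemma \ref{lem:anosov passes to supgroups}, exhibit a non-discrete point of the open set $\mathcal{P}_\theta$, and then use that $\mathcal{D}$ is closed and $\mathcal{D}\cup\mathcal{E}$ is conull. The one difference is that you start from the cusped (discrete) holonomy and redo Minsky's deformation of the puncture to an irrational elliptic to leave $\mathcal{D}$, whereas the paper cites Minsky's dense primitive-stable representation into $\mathrm{PSL}_2(\R)$, which is already non-discrete. Two small corrections: this deformation shows only that $\iota\circ\rho_0$ is not an interior point of $\mathcal{D}$ (which is all you use), not that $\mathcal{D}$ has empty interior near it, and the aside about Anosov structural stability can be dropped.
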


\begin{proof}
In \cite{minsky2013dynamics}, Minsky constructs for every even $n\geq2$
a primitive-stable representation $\rho_{0}:F_{n}\to\mathrm{PSL}_{2}(\R)$
with dense image; see \cite[Section 5]{minsky2013dynamics} and \cite[Section 9]{Lubotzky2011AutFnRepresentations}.
We may lift it to a primitive-stable representation $\rho_{1}:F_{n}\to\mathrm{SL}_{2}(\mathbb{R})$
with dense image. 

Since $G$ is non-compact and semisimple, its Lie algebra admits an
embedding of $\mathfrak{sl}_{2}(\mathbb{R})$, and hence we have a
nontrivial map $\widetilde{\mathrm{SL}}_{2}(\mathbb{R})\to G$, which
factors to a map $f:\mathrm{SL}_{2}(\R)\to G$ (since $G$ is linear).
Choose the $KA_{+}K$-decompositions so that $f(\mathrm{SO}_{2}(\mathbb{R}))\subseteq K$
and such that $f_{*}$ sends the positive Weyl chamber of $\mathrm{SL}_{2}(\mathbb{R})$
into the positive Weyl chamber of $G$. Let $\rho:=f\circ\rho_{1}$.

By Lemma \ref{lem:anosov passes to supgroups}, $\rho$ is $\theta$-primitive-stable
for some non-empty $\theta\subset\Delta$. The image of $\rho$ is
dense in the positive-dimensional subgroup $f(\mathrm{SL}_{2}(\R))$,
and in particular, it is not discrete. 

By openness of primitive stability, $\rho$ has an open neighbourhood
$U$ consisting of $\theta$-primitive-stable representations. Since
$\mathcal{D}$ is closed and $\rho\notin\mathcal{D}$, we may shrink
$U$ so that $U\cap\mathcal{D}=\varnothing$. But as $\mathcal{D}\cup\mathcal{E}$
is conull, it must be that $U\cap\mathcal{E}\neq\varnothing$. Choosing
$\rho'$ in this intersection gives a $\theta$-primitive-stable representation
with dense image.
\end{proof}
\begin{proof}[Proof of Theorem \ref{thm:non-compact-simple m(G)=00003Dinf}]
Fix an even $n\geq2$. By Theorem \ref{prop:dense-primitive-stable},
we can choose a $\theta$-primitive-stable representation $\rho:F_{n}\to G$
with dense image. Let $S=(s_{1},\ldots,s_{n})$ be the fixed free
basis and set 
\[
X_{\rho}=(\rho(s_{1}),\ldots,\rho(s_{n}))\in G^{n}.
\]
This tuple topologically generates $G$. We claim that it is Nielsen
irredundant. Suppose not. Then there are $\sigma\in\mathrm{Aut}(F_{n})$
and an index $i$ such that $(\rho(\sigma(s_{1})),\ldots,\rho(\sigma(s_{n})))$
topologically generates $G$ even after deleting the $i$-th entry.
Equivalently, for the proper free factor $\Lambda=\sigma(\langle s_{1},\ldots,\widehat{s_{i}},\ldots,s_{n}\rangle)$,
the subgroup $\rho(\Lambda)$ is dense in $G$. On the other hand,
Lemma \ref{lem:primitive-stable-free-factors} implies that $\rho|_{\Lambda}$
is $\theta$-Anosov. Hence $\rho(\Lambda)$ is discrete, and in particular
not dense, contradiction. Thus $X_{\rho}$ is Nielsen irredundant.
We have proved $\mu(G)\geq n$ for every even $n$, and hence $\mu(G)=\infty$.
Since $\mu(G)\leq m(G)$, also $m(G)=\infty$.
\end{proof}
\begin{proof}[Proof of Theorem \ref{Thm:domain of discontinuity}]
Consider the set of $\theta$-primitive-stable representations $\mathcal{P}_{\theta}$
and the set of dense representations $\mathcal{E}$ in $\mathrm{Hom}(F_{n},G)$.
Their images in the character variety $\mathcal{X}(F_{n},G)=\mathrm{Hom}(F_{n},G)//G$
are open. By Theorem \ref{prop:dense-primitive-stable}, their intersection
is nonempty. Finally $\mathrm{Out}(F_{n})$ acts properly discontinuously
on the image of $\mathcal{P}_{\theta}$ in $\mathcal{X}(F_{n},G)$
\cite[Theorem 1.1]{kim2021primitive}.
\end{proof}

\section{Examples\label{sec:Examples}}
\begin{proof}[Proof of Theorem \ref{thm:rotations}]
We argue case by case:
\begin{itemize}
\item The compact group $\mathrm{SO}(3)$ is the real form of $\mathrm{PGL}_{2}$
which is in turn isogenous to $\mathrm{SL}_{2}$. Moreover, by \cite{gilman1977finite}
(see also \cite{busch2025wiegold}), we have $\mu(\mathrm{PSL}_{2}(\mathbb{F}_{p}))=2$
for all primes $p\geq5$. It is not hard to see that the center of
a perfect group is a Frattini subgroup, so we also get $\mu(\mathrm{SL}_{2}(\mathbb{F}_{p}))=2$
for all primes $\geq5$. As a result
\begin{align*}
2\leq\mu(\mathrm{SO}(3)) & \leq\mu^{\C}\left(\mathrm{PGL}_{2}\right)=\mu^{\C}(\mathrm{SL}_{2})\\
 & \leq\limsup_{p\in\N,\text{ prime}}\mu(\mathrm{SL}_{2}(\mathbb{F}_{p}))\\
 & =\limsup_{p\in\N,\text{ prime}}\mu(\mathrm{PSL}_{2}(\mathbb{F}_{p}))=2.
\end{align*}
\item We have $\rr(\mathrm{SL}_{2}(\mathbb{F}_{p}))=3$ for almost every
prime $p$ by \cite{jambor2013minimal}, which as above implies $\rr(\mathrm{SO}(3))\leq3$.
On the other hand, it is not hard to see that $\mathrm{SO}(3)$ can
be generated by three $\pi$-angled rotations. Such a generating set
must be irredundant, because a group generated by a pair of order
$2$ elements must be virtually abelian (because the free product
$\Z/2\Z*\Z/2\Z$ is isomorphic to the infinite dihedral group $\Z/2\Z\ltimes\Z$).
Hence 
\[
m(\mathrm{SO}(3))=3.
\]
\item Since $\mathrm{SU}(2)$ is isogenous to $\mathrm{SO}(3)$, and $\mathrm{U}(2)$
is isogenous to $\mathrm{SU}(2)\times\R/\Z$, we get 
\[
\rr(\mathrm{U}(2))=m(\mathrm{SO}(3))+m(\R/\Z)=4.
\]
\item Since $\mathrm{SO}(4)$ is isogenous to $\mathrm{SO}(3)\times\mathrm{SO}(3)$
we get
\[
\rr(\mathrm{SO}(4))=2\rr(\mathrm{SO}(3))=6.
\]
\item By \cite{keen2012independent}, $\rr(\mathrm{SL}_{3}(\mathbb{F}_{p}))\leq6$
for all primes $p$. Since $\mathrm{SU}(3)$ is the real compact form
of $\mathrm{SL}_{3}$, we deduce in a similar fashion to the computation
for $\mathrm{SO}(3)$ that
\[
\rr\left(\mathrm{SU}(3)\right)\leq6.\qedhere
\]
\end{itemize}
\end{proof}
\printbibliography

\vspace{0.5cm}

\noindent{\textsc{Department of Mathematics, University of California San Diego, 9500 Gilman Drive, La Jolla, CA
92093, USA}}
\vspace{0.5cm}

\noindent{\textit{Email address:} \texttt{ivigdorovich@ucsd.edu}}

\noindent{\textit{Webpage:} \texttt{https://sites.google.com/view/itamarv}} \\
\vspace{0.5cm}

\noindent{\textsc{Department of Mathematics, Weizmann Institute of Science, 234 Herzl Street, Rehovot 7610001, Israel}}
\vspace{0.5cm}

\noindent{\textit{Email address:} \texttt{tal.cohen@weizmann.ac.il}}

\end{document}